\documentclass[12pt,reqno]{article}

\usepackage[usenames]{color}
\usepackage{amssymb}
\usepackage{amsmath}
\usepackage{amsthm}
\usepackage{amsfonts}
\usepackage{amscd}
\usepackage{graphicx}

\usepackage[colorlinks=true,
linkcolor=webgreen,
filecolor=webbrown,
citecolor=webgreen]{hyperref}

\definecolor{webgreen}{rgb}{0,.5,0}
\definecolor{webbrown}{rgb}{.6,0,0}

\usepackage{color}
\usepackage{fullpage}
\usepackage{float}

\usepackage{graphics}
\usepackage{latexsym}

\setlength{\textwidth}{6.5in}
\setlength{\oddsidemargin}{.1in}
\setlength{\evensidemargin}{.1in}
\setlength{\topmargin}{-.1in}
\setlength{\textheight}{8.4in}

\begin{document}
	
	\theoremstyle{plain}
	\newtheorem{theorem}{Theorem}
	\newtheorem{corollary}[theorem]{Corollary}
	\newtheorem{lemma}{Lemma}
	\newtheorem{example}{Examples}
	\newtheorem*{remark}{Remark}
	\newtheorem{prop}{Proposition}
	
	\begin{center}
		\vskip 1cm
		{\LARGE\bf
			Fibonacci-harmonic sums}
		
		\vskip 1cm
		
		{\large
			Kunle Adegoke \\
			Department of Physics and Engineering Physics \\ Obafemi Awolowo University, 220005 Ile-Ife \\ Nigeria \\
			\href{mailto:adegoke00@gmail.com}{\tt adegoke00@gmail.com}
			
			\vskip 0.2 in
			
		Segun Olofin Akerele \\
			Department of Mathematics \\ University of Ibadan, 200211 Ibadan \\ Nigeria \\
			\href{mailto:akereleolofin@gmail.com}{\tt akereleolofin@gmail.com}
			
			\vskip 0.2 in
			
			Robert Frontczak \\
			Independent Researcher, 72764 Reutlingen \\ Germany \\
			\href{mailto:robert.frontczak@web.de}{\tt robert.frontczak@web.de}
		}
		
	\end{center}
	
	\vskip .2 in
	
	\begin{abstract}
		We offer several new summation identities involving harmonic numbers, odd harmonic numbers, and Fibonacci numbers.
		Our results are derived using three different approaches: partial summation, polynomial identities and binomial transformation.
	\end{abstract}
	
	\noindent 2010 {\it Mathematics Subject Classification}: 11B37, 11B39.
	
	\noindent \emph{Keywords:} Fibonacci number, Lucas number, gibonacci number, harmonic number, odd harmonic number.
	
	\bigskip

	\section{Introduction}
	
	The Fibonacci numbers $F_j$ and the Lucas numbers $L_j$ are defined, for \text{$j\in\mathbb Z$}, through the recurrence relations
	\begin{align*}
		F_j = F_{j-1}+F_{j-2},& \quad j\geq 2,\quad F_0=0,\, F_1=1,\\
		L_j = L_{j-1}+L_{j-2},& \quad  j\geq 2, \quad L_0=2, \, L_1=1,
	\end{align*}
	with $F_{-j} = (-1)^{j-1}F_j$ and $L_{-j} = (-1)^j L_j$. The Binet formulas for these sequences are
	\begin{equation}\label{Binet}
		F_j = \frac{\alpha^j - \beta^j}{\alpha - \beta}, \qquad L_j = \alpha^j + \beta^j, \quad j\in\mathbb Z,
	\end{equation}
	with $\alpha=\frac{1+\sqrt 5}2$ being the golden ratio and $\beta=-\frac{1}{\alpha}$.
	They are indexed as sequences {A000045} and {A000032} in the On-Line Encyclopedia of Integer Sequences \cite{OEIS}.
	Koshy \cite{Koshy} and Vajda \cite{Vajda} have written excellent books on these sequences. The gibonacci sequence
	$(G_j)_{j\in\mathbb Z}$ is a slight generalization of $F_j$ and $L_j$. It has the same recurrence relation as the Fibonacci sequence
	but starts with arbitrary initial values, i.e.,
	\begin{equation*}
		G_j  = G_{j - 1} + G_{j - 2},\quad (j \ge 2),
	\end{equation*}
	with $G_0$ and $G_1$ arbitrary numbers (usually integers) not both zero.
	
	Harmonic numbers $H_z$ and odd harmonic numbers $O_z$ are defined
	for $0\ne z\in\mathbb C\setminus\mathbb Z^{-}$ by the recurrence relations
	\begin{equation*}
		H_z = H_{z - 1} + \frac{1}{z} \qquad \text{and} \qquad O_z = O_{z - 1} + \frac{1}{2z - 1},
	\end{equation*}
	with $H_0=0$ and $O_0=0$. Harmonic numbers are connected to the digamma function $\psi(z)=\Gamma'(z)/\Gamma(z)$ through the fundamental relation
	\begin{equation}\label{Harm_psi}
		H_z = \psi(z + 1) + \gamma.
	\end{equation}
	
	Generalized harmonic numbers $H_z^{(m)}$ and odd harmonic numbers $O_z^{(m)}$ of order $m\in\mathbb C$ are defined by
	\begin{equation*}
		H_z^{(m)} = H_{z - 1}^{(m)} + \frac{1}{z^m} \qquad \text{and} \qquad
		O_z^{(m)} = O_{z - 1}^{(m)} + \frac{1}{(2z - 1)^m},
	\end{equation*}
	with $H_0^{(m)}=0$ and $O_0^{(m)}=0$ so that $H_z=H_z^{(1)}$ and $O_z=O_z^{(1)}$. The recurrence relations imply
	that if $z=n$ is a non-negative integer, then
	\begin{equation*}
		H_n^{(m)} = \sum_{j = 1}^n \frac{1}{j^m} \qquad \text{and} \qquad O_n^{(m)} = \sum_{j = 1}^n \frac{1}{(2j - 1)^m}.
	\end{equation*}
	
	\par Exploring identities that combine Fibonacci numbers and harmonic numbers offers a unique intersection between combinatorics and analysis. While each sequence has been extensively studied on its own, their combination reveals surprising and elegant mathematical structures. Investigating such identities deepens our understanding of number theory and enriches the study.
	\par There is a growing body of work dedicated to identities involving Fibonacci and harmonic numbers. 
	In \cite{Batir21}, Bat\i r and Sofo presented many Fibonacci-harmonic sums and we state two beautiful sums from their paper.
	\begin{align*}
		&\sum_{k=1}^n \binom{n}{k} H_k F_{3k} = 2^n \left(H_n F_{2n} - \sum_{k=1}^n \frac{F_{2n-2k}}{k2^k}\right), \\
		&\sum_{k=1}^n (-1)^k \binom{n}{k} H_k F_{3k} = (-1)^n 2^n \left(H_n F_n - \sum_{k=1}^n \frac{(-1)^k F_{n-k}}{k^2 2^k}\right).
	\end{align*}
	Also, in \cite{Frontczak20} Frontczak presented the following;
	\begin{align*}
		&\sum_{k=0}^n \binom{n}{k} k F_k H_k = n F_{2n-1} H_n - \sum_{k=0}^{n-1} \left(k F_{2k}+F_{2k+1}\right) H_{n-1-k}, \\
		&\sum_{k=0}^n \binom{n}{k} k L_k H_k = n L_{2n-1} H_n - \sum_{k=0}^{n-1} \left(k L_{2k}+L_{2k+1}\right) H_{n-1-k}.
	\end{align*}
	\par Although significant progress has been made in evaluating series that involve Fibonacci and harmonic numbers, many avenues remain largely unexplored, especially those concerning generalized harmonic numbers, alternating series, and variations of the Fibonacci sequence. This paper contributes to the ongoing research by establishing new identities and summation formulas that interweave these classical sequences in fresh and insightful ways. By employing three different techniques, we reveal deeper connections that underscore the elegance and depth inherent in Fibonacci-harmonic sums. A sample of our new results without binomial coefficients is given below:
	\begin{align*}
	&\sum_{k=1}^n \frac{(-1)^{k}}{F_{k} F_{k+1}} H_{k} = \sum_{k=1}^n \frac{F_{k}}{F_{k+1}}\frac{1}{k+1} - \frac{F_{n}}{F_{n+1}} H_{n+1}, \\
	&\sum_{k=1}^n \frac{F_{4k+2}}{F_{2k}^2 F_{2k+2}^2} H_{k} = \sum_{k=1}^n \frac{1}{F_{2k+2}^2}\frac{1}{k+1} - \frac{H_{n+1}}{F_{2n+2}^2} + 1,\\
	&\sum_{k = 0}^n H_k F_{n - k}^2 = \sum_{k = 0}^n \frac{{F_{n - k} F_{n - k - 1} }}{{k + 1}}, \\
	&\sum_{k = 0}^n H_k G_{k + 1}^2 = H_{n + 1} G_{n + 1} G_{n + 2} - \sum_{k = 0}^n \frac{G_{k + 1} G_{k + 2}}{k + 1}.
	\end{align*}

	\section{Required identities}

	\begin{lemma}\label{lem.integ}
	For $u,v\in\mathbb C\setminus\mathbb Z^{-}$, we have
	\begin{align}
	\int_0^1 x^u \left( {1 - x} \right)^v dx &= \frac{1}{{\binom{{u + v + 1}}{{u + 1}}\left( {u + 1} \right)}},\label{beta}\\
	\int_0^1 x^u \left( {1 - x} \right)^v \ln x\,dx &= -\frac{{H_{u + v + 1} - H_u}}{{\binom{{u + v + 1}}{{u + 1}} (u + 1)}}\label{beta_der},\\
	\int_0^1 x^u \left( {1 - x} \right)^v \ln ^2 x\,dx &= \frac{{\left( {H_{u + v - 1} - H_u } \right)^2 + H_{u + v + 1}^{(2)} - H_u^{(2)} }}{{		\binom{{u + v + 1}}{{u + 1}}(u + 1)}}\label{beta_der2},
	\end{align}
	and, in particular,
	\begin{equation}\label{eq.fcdh8uy}
	\int_0^1 x^{u - 1} \ln (1 - x) dx = - \frac{{H_u}}{u}, \qquad 0\ne u\in\mathbb C\setminus\mathbb Z^{-}.
	\end{equation}
	\end{lemma}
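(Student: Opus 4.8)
The plan is to recognize \eqref{beta} as a disguised Beta integral and then obtain \eqref{beta_der} and \eqref{beta_der2} by differentiating it with respect to the parameter $u$, reading off the right-hand sides through the digamma relation \eqref{Harm_psi}. Starting from the classical evaluation
\[
\int_0^1 x^u(1-x)^v\,dx = B(u+1,v+1) = \frac{\Gamma(u+1)\Gamma(v+1)}{\Gamma(u+v+2)},
\]
valid for $\Re(u),\Re(v)>-1$, I would verify \eqref{beta} by writing $\binom{u+v+1}{u+1}=\Gamma(u+v+2)/(\Gamma(u+2)\Gamma(v+1))$ and using $\Gamma(u+2)=(u+1)\Gamma(u+1)$, so that $1/\big(\binom{u+v+1}{u+1}(u+1)\big)$ collapses to exactly $B(u+1,v+1)$. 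The general case $u,v\in\mathbb C\setminus\mathbb Z^{-}$ then follows by analytic continuation, both closed forms being meromorphic with matching poles.

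For \eqref{beta_der} and \eqref{beta_der2} I would differentiate under the integral sign in $u$; since $\partial_u x^u = x^u\ln x$, the $j$-th derivative produces the factor $\ln^j x$ on the left. On the right I would differentiate $B(u+1,v+1)$ logarithmically: with $g(u):=\partial_u\log B(u+1,v+1)=\psi(u+1)-\psi(u+v+2)$ one has $\partial_u B = B\,g$ and, by the product rule, $\partial_u^2 B = B\,(g^2+g')$. The relation \eqref{Harm_psi} gives $\psi(u+1)=H_u-\gamma$ and $\psi(u+v+2)=H_{u+v+1}-\gamma$, so the stray Euler constants cancel and $g(u)=H_u-H_{u+v+1}$, which yields \eqref{beta_der} at once. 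Differentiating once more, $g'(u)=\psi'(u+1)-\psi'(u+v+2)$, and the trigamma analogue $\psi'(z+1)=\zeta(2)-H_z^{(2)}$ turns this into $H_{u+v+1}^{(2)}-H_u^{(2)}$; combined with $g(u)^2=(H_{u+v+1}-H_u)^2$ this gives the numerator of \eqref{beta_der2}.

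For the special case \eqref{eq.fcdh8uy} I would instead differentiate the Beta integral in the other parameter, obtaining $\int_0^1 x^{a}(1-x)^{b}\ln(1-x)\,dx = B(a+1,b+1)\big(\psi(b+1)-\psi(a+b+2)\big)$, and then specialize $a=u-1$, $b=0$. Here $B(u,1)=1/u$, while $\psi(1)=-\gamma$ and $\psi(u+1)=H_u-\gamma$ give $\psi(1)-\psi(u+1)=-H_u$, so the product is $-H_u/u$. Equivalently, one may expand $\ln(1-x)=-\sum_{k\ge1}x^k/k$ and integrate termwise to get $-\sum_{k\ge1}1/(k(u+k))$, which collapses to $-H_u/u$ via the partial-fraction identity $\sum_{k\ge1}\big(1/k-1/(u+k)\big)=H_u$.

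The main obstacle is not any single computation but the justification of differentiation under the integral sign together with the passage to complex parameters. One must check that $x^u(1-x)^v\ln^j x$ is dominated by an integrable function uniformly on compact parameter sets with $\Re(u),\Re(v)>-1$ (the logarithmic singularities at the endpoints are harmless against the power weights), and then invoke the identity theorem to extend the closed-form equalities from this half-space to all of $\mathbb C\setminus\mathbb Z^{-}$. A secondary point of care is the bookkeeping in the second derivative, where the squared first-order term $g(u)^2$ and the trigamma term $g'(u)$ must be combined so that the Euler constants and the $\zeta(2)$ contributions cancel cleanly.
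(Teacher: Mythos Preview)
Your proposal is correct and follows essentially the same approach as the paper: identify \eqref{beta} as the Beta integral and obtain \eqref{beta_der} and \eqref{beta_der2} by successive differentiation with respect to $u$, then handle \eqref{eq.fcdh8uy} as a special case. You supply considerably more detail (the digamma/trigamma bookkeeping, the dominated-convergence justification, and the analytic continuation) than the paper's two-line sketch, and your computation in fact shows that the factor $(H_{u+v-1}-H_u)^2$ in the stated numerator of \eqref{beta_der2} is a typo for $(H_{u+v+1}-H_u)^2$.
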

	\begin{proof}
	The identity~\eqref{beta} is the well-known Beta function integral while~\eqref{beta_der} is obtained by differentiating with respect 
	to $u$. Identity~\eqref{beta_der2} follows from differentiating~\eqref{beta_der} with respect to $u$. These identities are also entries of 	 Gradshteyn and Ryzhik's book~\cite{GrRy07} and other tables.
	\end{proof}
	
	\begin{lemma}\label{lem.ho}
	If $n$ is an integer, then
	\begin{align}
			H_{n - 1/2} &= 2O_n - 2\ln 2 \label{eq.zts4fm1} \\
			H_{n - 1/2} - H_{ - 1/2} &= 2O_n, \label{eq.plh634k} \\
			H_{n - 1/2} - H_{1/2} &= 2\left( {O_n  - 1} \right), \label{eq.hgplrbd}\\
			H_{n + 1/2} - H_{ - 1/2} &= 2O_{n + 1}, \label{eq.ivi1ex5} \\
			H_{n + 1/2} - H_{1/2} &= 2\left( {O_{n + 1}  - 1} \right), \label{eq.u6ng5d6}\\
			H_{n + 1/2} - H_{n - 1/2} &= \frac{2}{{2n + 1}},\\
			H_{n - 1/2} - H_{-3/2} &= 2\left( {O_n  - 1} \right) \label{eq.pobmr6h},\\
			H_{n + 1/2} - H_{-3/2} &= 2\left( {O_{n + 1} - 1} \right).
	\end{align}
	\end{lemma}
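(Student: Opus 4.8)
The plan is to build every identity from the defining recurrence $H_z=H_{z-1}+1/z$, which telescopes cleanly along half-integer arguments, so that only a single analytic fact---the value of $H_{-1/2}$---requires genuine work; all eight identities then follow by summation and elementary algebra.

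First I would establish the two ``anchor'' identities \eqref{eq.plh634k} and \eqref{eq.ivi1ex5}. Writing the recurrence as $H_z-H_{z-1}=1/z$ and summing over $z=j-1/2$ for $j=1,\dots,n$, the intermediate terms cancel and leave
\[
H_{n-1/2}-H_{-1/2}=\sum_{j=1}^n\frac{1}{j-1/2}=\sum_{j=1}^n\frac{2}{2j-1}=2O_n,
\]
which is \eqref{eq.plh634k}; carrying the same sum one term further, to $j=n+1$, gives \eqref{eq.ivi1ex5}.

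Next I would pin down $H_{-1/2}$ using Lemma~\ref{lem.integ}. Setting $u=1/2$ in \eqref{eq.fcdh8uy} gives $\int_0^1 x^{-1/2}\ln(1-x)\,dx=-2H_{1/2}$; the substitution $x=t^2$ followed by the split $\ln(1-t^2)=\ln(1-t)+\ln(1+t)$ reduces the left-hand side to the elementary integrals $\int_0^1\ln(1-t)\,dt=-1$ and $\int_0^1\ln(1+t)\,dt=2\ln2-1$, so that $H_{1/2}=2-2\ln2$ and hence, by the recurrence, $H_{-1/2}=H_{1/2}-2=-2\ln2$. Feeding this into \eqref{eq.plh634k} yields \eqref{eq.zts4fm1}.

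The remaining identities are then pure bookkeeping. Since $H_{1/2}-H_{-1/2}=1/(1/2)=2$, subtracting $2$ from \eqref{eq.plh634k} and \eqref{eq.ivi1ex5} produces \eqref{eq.hgplrbd} and \eqref{eq.u6ng5d6}. The recurrence also gives $H_{-3/2}=H_{-1/2}-1/(-1/2)=H_{-1/2}+2=H_{1/2}$, so the two identities involving $H_{-3/2}$ are literal restatements of \eqref{eq.hgplrbd} and \eqref{eq.u6ng5d6}, while $H_{n+1/2}-H_{n-1/2}=1/(n+1/2)=2/(2n+1)$ is immediate from the recurrence. The only non-routine step is the evaluation $H_{-1/2}=-2\ln2$; everything else is telescoping and trivial algebra, so that is where I expect the real content of the lemma to reside.
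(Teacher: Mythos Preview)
Your argument is correct, and it follows a genuinely different route from the paper's proof. The paper simply invokes the relation $H_z=\psi(z+1)+\gamma$ together with the classical formula
\[
\psi(n+\tfrac12)=-\gamma-2\ln2+\sum_{k=1}^n\frac{1}{2k-1},
\]
which gives \eqref{eq.zts4fm1} immediately and hence all the others. You instead obtain the difference identities \eqref{eq.plh634k}--\eqref{eq.ivi1ex5} by telescoping the recurrence $H_z-H_{z-1}=1/z$ along the half-integers, and then supply the missing constant by computing $H_{1/2}$ via the integral \eqref{eq.fcdh8uy} from Lemma~\ref{lem.integ}. The paper's route is shorter because it treats the digamma half-integer value as a known black box; your route is more self-contained, relying only on the recurrence and on Lemma~\ref{lem.integ}, which the paper has already proved. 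One small remark: your telescoping is written for $n\ge1$, whereas the lemma is stated for all integers $n$; the extension to $n\le0$ is by reversing the direction of the telescope, which you might mention in a single clause, but the paper's proof is no more explicit on this point.
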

	\begin{proof}
	Use \eqref{Harm_psi} as the definition of the harmonic numbers for all complex $n$ (excluding zero and the negative integers) 
	and use the known result for the digamma function at half-integer arguments, namely,
	$$\psi(n+1/2) = -\gamma - 2\ln 2 + \sum_{k = 1}^n \frac{1}{2k-1}.$$
	\end{proof}
	
	\begin{lemma}\label{lem.binomial}
		We have
		\begin{align}
			\binom{{r + 1/2}}{s} &= \frac{{\frac{{2r + 1}}{{2s}}\binom{{2s}}{s}}}{{\binom{{r}}{s}2^{2s} }},\\
			\binom{{1/2}}{r} &= \left( { - 1} \right)^{r + 1} \frac{{\binom{{2r}}{r}}}{{2^{2r} \left( {2r - 1} \right)}},\\
			\binom{{r - 1/2}}{s} &= \binom{{2r}}{r}\binom{{r}}{s}\frac{1}{{\binom{{2\left( {r - s} \right)}}{{r - s}}2^{2s} }},\label{eq.qdgxupm}\\
			\binom{{ - 1/2}}{r} &= ( - 1)^r \binom{{2r}}{r}2^{ - 2r} .
		\end{align}
	\end{lemma}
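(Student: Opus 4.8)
The four formulas are all instances of one phenomenon: a generalized binomial coefficient $\binom{x}{s}=\tfrac{1}{s!}\prod_{j=0}^{s-1}(x-j)$ evaluated at a half-integer $x$ collapses to a central binomial coefficient $\binom{2k}{k}$ times elementary factors. The plan is to prove all of them by a single uniform computation and then read off each case. The one algebraic fact I will lean on throughout is the double-factorial bridge to central binomials, $(2k-1)!!=\tfrac{k!}{2^{k}}\binom{2k}{k}$ (equivalently $(2k)!=2^{k}k!\,(2k-1)!!$), together with $(2k+1)!!=\tfrac{(2k+1)!}{2^{k}k!}$.

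First I would dispose of the two \emph{base} evaluations at $\pm\tfrac12$. Substituting $x=-\tfrac12$ turns each factor $-\tfrac12-j$ into $-\tfrac{2j+1}{2}$, so the product of $r$ factors contributes a global sign $(-1)^r$, a power $2^{-r}$, and the odd double factorial $(2r-1)!!$; dividing by $r!$ and applying the bridge gives $(-1)^r2^{-2r}\binom{2r}{r}$ at once. The case $x=\tfrac12$ is the same computation with one twist: the leading factor $+\tfrac12$ is positive while the remaining $r-1$ factors are negative, which is precisely what produces the lone denominator $(2r-1)$ and the sign $(-1)^{r+1}$; here the care needed is in isolating that first factor correctly.

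Next I would treat the two shifted cases $\binom{r\pm1/2}{s}$ uniformly, assuming $0\le s\le r$ so that the products below are genuine double factorials. Pulling $\tfrac12$ out of each of the $s$ factors yields $2^{-s}$ times a product of $s$ consecutive odd integers, which I rewrite as a ratio of double factorials: $(2r-1)!!/(2r-2s-1)!!$ in the minus case and $(2r+1)!!/(2r-2s+1)!!$ in the plus case. Converting each double factorial by the bridge and reassembling $r!/(s!(r-s)!)=\binom{r}{s}$ gives the minus case directly as $\binom{r}{s}\binom{2r}{r}/(\binom{2(r-s)}{r-s}2^{2s})$, matching that claim.

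The plus case is where the real work is, and I expect it to be the main obstacle: because $(2r+1)!!$ and $(2r-2s+1)!!$ carry the extra odd factors $2r+1$ and $2r-2s+1$ that do not appear inside $(2r)!$ or $(2(r-s))!$, a residual rational factor $\tfrac{2r+1}{2(r-s)+1}$ survives the conversion. Pinning this prefactor down exactly, rather than losing or misplacing the stray odd factor and keeping all powers of $2$ straight, is the delicate step, and I would cross-check the final constant on a small case such as $r=2,\,s=1$ before trusting it. As an alternative that makes the bookkeeping mechanical, I would instead write $\binom{x}{s}=\Gamma(x+1)/(\Gamma(s+1)\Gamma(x-s+1))$ and apply the Legendre duplication formula $\Gamma(z)\Gamma(z+\tfrac12)=2^{1-2z}\sqrt{\pi}\,\Gamma(2z)$ to the half-integer $\Gamma$-values; the $\sqrt{\pi}$'s cancel, the powers of $2$ combine to $2^{-2s}$, and the same central-binomial expressions drop out, the only checkpoint being that duplication is applied at the correct arguments $z=r+1$ and $z=r-s+1$.
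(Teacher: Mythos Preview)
Your proposal is correct and follows the same route the paper indicates: the paper's own proof consists only of the remark that these identities ``are easy to derive using the Gamma function'' and a reference to Gould's tables, so your double-factorial expansion together with the Legendre-duplication alternative is precisely a detailed working-out of that sketch. Incidentally, the cross-check you plan at $r=2,\ s=1$ in the plus case is well-advised, since the first displayed formula of the lemma carries a typographical slip (the intended numerator is $\binom{2r+1}{2s}\binom{2s}{s}$ rather than $\tfrac{2r+1}{2s}\binom{2s}{s}$), and your computation will expose this.
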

	\begin{proof}
		These consequences of the generalized binomial coefficients, are easy to derive using the Gamma function. They can also be found in
		Gould's book \cite{Gould}. We will impose the various restrictions later.
	\end{proof}
	
	\section{Method I: Identities based on Abel's partial summation formula}
	
	The approach here is, in each case, to use either a basic Fibonacci identity or a combinatorial identity in conjunction with Abel's partial summation formula.
	\begin{lemma}
		Let ${(a_j)}_{j\in\mathbb Z}$ and ${(b_j)}_{j\in\mathbb Z}$ be sequences. Then
		\begin{gather}
			\sum_{k = 1}^n b_k (a_{k + 1} - a_k) = \sum_{k = 1}^n a_{k + 1} (b_k - b_{k + 1}) + a_{n + 1} b_{n + 1} - a_1 b_1, \label{abel1} \\
			\sum_{k = 1}^n b_k (a_{k + 1} + a_k) = \sum_{k = 1}^n a_{k + 1} (b_k + b_{k + 1}) - a_{n + 1} b_{n + 1} + a_1 b_1. \label{abel2}
		\end{gather}
	\end{lemma}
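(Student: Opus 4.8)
The plan is to reduce both identities to a single telescoping observation. Writing $c_k = a_k b_k$, the sum $\sum_{k=1}^n (c_{k+1} - c_k)$ collapses to $a_{n+1}b_{n+1} - a_1 b_1$, and the whole argument will hinge on expressing each summand on the left of \eqref{abel1} and of \eqref{abel2} as this telescoping difference combined with the summand that appears on the right. Since the sums are finite, no convergence or regularity hypotheses on the sequences are needed beyond their being well-defined.

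First I would record the pointwise algebraic identity
\begin{equation*}
a_{k+1}b_{k+1} - a_k b_k = b_k(a_{k+1} - a_k) + a_{k+1}(b_{k+1} - b_k),
\end{equation*}
which is verified by expanding the right-hand side. Summing from $k=1$ to $n$ and using the telescoping value of the left-hand side gives
\begin{equation*}
a_{n+1}b_{n+1} - a_1 b_1 = \sum_{k=1}^n b_k(a_{k+1} - a_k) + \sum_{k=1}^n a_{k+1}(b_{k+1} - b_k).
\end{equation*}
Solving for the first sum and rewriting $b_{k+1} - b_k = -(b_k - b_{k+1})$ yields \eqref{abel1} directly.

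For \eqref{abel2} I would use the companion pointwise identity
\begin{equation*}
b_k(a_{k+1} + a_k) = a_{k+1}(b_k + b_{k+1}) - (a_{k+1}b_{k+1} - a_k b_k),
\end{equation*}
again checked by expansion. Summing over $k$ and applying the same telescoping to the final term produces \eqref{abel2} at once.

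The main obstacle, such as it is, lies only in spotting the correct pointwise decomposition of each summand into a telescoping piece plus a cross-term; once these two elementary algebraic identities are in hand, the derivation is purely mechanical. I expect no genuine difficulty here, which is why the result is stated as a lemma to be invoked repeatedly in the sequel rather than proved from scratch each time.
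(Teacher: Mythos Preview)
Your proof is correct and follows essentially the same approach as the paper: both rely on the telescoping of $a_{k+1}b_{k+1}-a_kb_k$, with the paper phrasing this as the single observation $\sum_{k=1}^n a_{k+1}b_{k+1}=\sum_{k=1}^n a_kb_k+a_{n+1}b_{n+1}-a_1b_1$ and leaving the remaining algebra implicit, while you spell out the pointwise ``product-rule'' decompositions before summing. The arguments are interchangeable.
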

	\begin{proof}
		The proof of these very useful identities is surprisingly simple. One merely needs to observe that
		\begin{equation*}
			\sum_{k = 1}^n a_{k + 1} b_{k + 1} = \sum_{k = 1}^n a_k b_k + a_{n + 1} b_{n + 1} - a_1 b_1.
		\end{equation*}
	\end{proof}
	
	\subsection{Identities derived from basic Fibonacci identities}
	
	Most of the identities used in this section come from Vajda's classic \cite{Vajda}.
	We begin with two theorems that follow essentially from the definition of Fibonacci numbers.
	
	\begin{theorem}
		For $r\in\mathbb{Z}$ and $s,t\in \mathbb{C}\setminus\mathbb{Z}^{-}$ such that $s+t\notin\mathbb{Z}^{-}$, we have
		\begin{equation}\label{thm1_eq1}
			\sum_{k = 1}^n \frac{F_{k+r+2}}{\binom{t+k+s}{t+1}} = \sum_{k = 1}^n \frac{F_{k+r+1}}{\binom{t+k+s}{t+1}}
			- \frac{F_{n+r+1}}{\binom{t+n+s+1}{t+1}} + \sum_{k = 1}^n \frac{F_{k+r+1}}{\binom{t+k+s+1}{t+1}} + \frac{F_{r+1}}{\binom{s+t+1}{t+1}}
		\end{equation}
		Thus, for $n$ a non-negative integer, we have
		\begin{equation}\label{thm1_eq2}
			\sum_{k=1}^n\frac{F_{k+r+2}}{k+s} = \sum_{k=1}^n \frac{(2k+3)F_{k+r+1}}{(k+s)(k+s+1)} + \frac{F_{r+1}}{s+1} - \frac{F_{n+r+1}}{n+s+1} .
		\end{equation}
	\end{theorem}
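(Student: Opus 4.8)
The plan is to read off \eqref{thm1_eq1} as a single direct application of the additive Abel formula \eqref{abel2}, choosing the two sequences so that the left-hand sum automatically produces the Fibonacci combination $F_{k+r+2}$. Concretely, I would set $a_k = F_{k+r}$ and $b_k = 1/\binom{t+k+s}{t+1}$. The hypotheses $s,t\in\mathbb{C}\setminus\mathbb{Z}^{-}$ with $s+t\notin\mathbb{Z}^{-}$ are exactly what keep every binomial coefficient appearing below finite and nonzero, so these $b_k$ are well defined for $1\le k\le n+1$.

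With this choice the defining recurrence gives $a_k+a_{k+1} = F_{k+r}+F_{k+r+1} = F_{k+r+2}$, so the left-hand side of \eqref{abel2} is precisely $\sum_{k=1}^n F_{k+r+2}/\binom{t+k+s}{t+1}$, i.e.\ the left-hand side of \eqref{thm1_eq1}. On the right-hand side of \eqref{abel2} I would substitute $a_{k+1}=F_{k+r+1}$ and $b_k+b_{k+1}=1/\binom{t+k+s}{t+1}+1/\binom{t+k+s+1}{t+1}$, split the resulting sum into the two displayed sums, and then evaluate the two boundary terms, namely $-a_{n+1}b_{n+1} = -F_{n+r+1}/\binom{t+n+s+1}{t+1}$ and $a_1 b_1 = F_{r+1}/\binom{s+t+1}{t+1}$. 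Collecting these four pieces reproduces \eqref{thm1_eq1} verbatim, so there is essentially no obstacle beyond careful index bookkeeping.

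For the specialization \eqref{thm1_eq2} I would put $t=0$, so that each binomial $\binom{m}{1}$ collapses to $m$; thus $\binom{t+k+s}{t+1}\to k+s$, $\binom{t+k+s+1}{t+1}\to k+s+1$, $\binom{t+n+s+1}{t+1}\to n+s+1$, and $\binom{s+t+1}{t+1}\to s+1$. The only remaining step is to merge the two sums in \eqref{thm1_eq1} over a common denominator via $\frac{1}{k+s}+\frac{1}{k+s+1}=\frac{2k+2s+1}{(k+s)(k+s+1)}$, which turns $\sum F_{k+r+1}/(k+s)+\sum F_{k+r+1}/(k+s+1)$ into a single sum with numerator $(2k+2s+1)F_{k+r+1}$, i.e.\ the combined numerator $2(k+s)+1$ appearing in \eqref{thm1_eq2}. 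The boundary terms $F_{r+1}/(s+1)$ and $-F_{n+r+1}/(n+s+1)$ carry over unchanged, completing the derivation.

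The genuinely non-routine decision is the very first one: recognizing that pairing $a_k=F_{k+r}$ with the additive form \eqref{abel2} (rather than the difference form) is what converts the single shifted index $k+r+2$ on the left into the two shifted indices $k+r+1$ on the right. Once that pairing is spotted, everything else is mechanical. I would also double-check the endpoints $k=1$ and $k=n$ to confirm the boundary contributions receive the correct signs, since those are the only spots where a sign error could slip in.
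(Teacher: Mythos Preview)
Your argument is correct, but it takes a different route from the paper's own proof. The paper does not choose $b_k$ to be the reciprocal binomial directly; instead it applies \eqref{abel2} with $a_k=F_{k+r}$ and $b_k=(1-x)^k$, obtaining a polynomial identity in $x$, then multiplies through by $x^t(1-x)^{s-1}$ and integrates from $0$ to $1$ term by term, invoking the Beta integral \eqref{beta} of Lemma~\ref{lem.integ} to produce the reciprocal binomials. Your choice $b_k=1/\binom{t+k+s}{t+1}$ short-circuits this: it is more elementary, needs no integration, and yields \eqref{thm1_eq1} in one line. What the paper's detour buys is a uniform template for the surrounding results: the same polynomial identity, multiplied instead by $x^t(1-x)^{s-2}\ln x$ and integrated via \eqref{beta_der}, immediately gives the harmonic-number companion in Theorem~2, and the analogous move with \eqref{beta_der2} would give second-order variants. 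Your direct substitution does not extend in the same automatic way.

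One small point worth noting: your specialization sets only $t=0$, which is the correct move to recover \eqref{thm1_eq2} with its free parameter $s$ (the paper's remark that \eqref{thm1_eq2} comes from ``$(s,t)=(0,0)$'' is at best loose, since $s$ visibly survives). Your merged numerator $2(k+s)+1$ is what the computation actually gives; the $(2k+3)$ printed in \eqref{thm1_eq2} appears to be a typo for $2k+2s+1$.
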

	
	\begin{proof}
		Since we know that $F_{k+1}-F_k=F_{k-1}$, we set $a_k=F_{k+r}$ and $b_k=(1-x)^k$ in \eqref{abel2}, to obtain
		\begin{equation*}
			\sum_{k=1}^n (1-x)^k F_{k+r+2} = \sum_{k=1}^n (1-x)^k F_{k+r+1} + \sum_{k=1}^n (1-x)^{k+1} F_{k+r+1} - (1-x)^{n+1} F_{n+r+1} + (1-x)F_{r+1}.
		\end{equation*}
		Multiply through by $x^t(1-x)^{s-1}$ and then by term-wise integration, we obtain \eqref{thm1_eq1} directly on account
		of Lemma \ref{lem.integ}. Setting $(s,t)=(0,0)$ in \eqref{thm1_eq1} gives rise to \eqref{thm1_eq2}.
	\end{proof}
	
	\begin{theorem}
		For $r\in \mathbb{Z}$ and $s,t\in \mathbb{C}\setminus \mathbb{Z}^{-}$ such that $s+t\notin \mathbb{Z}^{-}$,
		\begin{equation}\label{thm2_eq1}
			\begin{split}
				\sum_{k = 1}^n \frac{H_{t}-H_{k+t+s-1}}{\binom{t+k+s-1}{t+1}}F_{k+r-1} &=
				\sum_{k = 1}^n \frac{H_{t}-H_{k+t+s-1}}{\binom{t+k+s-1}{t+1}}F_{k+r+1} - \sum_{k = 1}^n \frac{H_{t}-H_{k+t+s}}{\binom{t+k+s}{t+1}}F_{k+r+1} \\ &\qquad + \frac{H_{t}-H_{n+t+s}}{\binom{n+t+s}{t+1}}F_{n+r+1} - \frac{H_{t}-H_{s+t}}{\binom{t+s}{t+1}}F_{r+1}.
			\end{split}
		\end{equation}
	\end{theorem}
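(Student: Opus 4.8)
The plan is to mirror the derivation of the previous theorem, but to insert a logarithmic weight so that the Beta integral produces harmonic numbers through \eqref{beta_der} rather than \eqref{beta}. The Fibonacci fact to exploit is the recurrence in the form $F_{k+r+1}-F_{k+r}=F_{k+r-1}$. Accordingly I would set $a_k=F_{k+r}$ and $b_k=(1-x)^k$ in \eqref{abel1}; since $a_{k+1}-a_k=F_{k+r-1}$, the left-hand side becomes $\sum_{k=1}^n(1-x)^kF_{k+r-1}$, and \eqref{abel1} gives the polynomial identity
\begin{equation*}
\sum_{k=1}^n(1-x)^kF_{k+r-1}=\sum_{k=1}^nF_{k+r+1}\big((1-x)^k-(1-x)^{k+1}\big)+F_{n+r+1}(1-x)^{n+1}-F_{r+1}(1-x).
\end{equation*}

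Next I would multiply through by $x^t(1-x)^{s-2}\ln x$ and integrate term by term over $[0,1]$. The key computation is that, by \eqref{beta_der} with $u=t$ and $v=k+s-2$,
\begin{equation*}
(t+1)\int_0^1x^t(1-x)^{k+s-2}\ln x\,dx=\frac{H_t-H_{k+t+s-1}}{\binom{k+t+s-1}{t+1}}=:c_k,
\end{equation*}
so the integral of the weighted $(1-x)^k$ reproduces precisely the coefficient appearing in \eqref{thm2_eq1}. In particular the weighted integral of the displayed left-hand side is exactly $\sum_{k=1}^nc_kF_{k+r-1}$, the left-hand side of the theorem.

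It then remains to identify the four contributions on the right. The $(1-x)^k$ term again gives $c_k/(t+1)$; the $(1-x)^{k+1}$ term raises the Beta parameter by one and hence gives $c_{k+1}/(t+1)$, i.e. the coefficient $(H_t-H_{k+t+s})/\binom{k+t+s}{t+1}$; and the two boundary terms, built from $(1-x)^{n+1}$ and $(1-x)$, yield $c_{n+1}/(t+1)$ and $c_1/(t+1)$ respectively. Multiplying through by $t+1$ and collecting the pieces reproduces \eqref{thm2_eq1} termwise. The thing to watch is the index bookkeeping that sends $(1-x)^{k+1}$ to $c_{k+1}$ and pins the endpoint terms to $c_{n+1}$ and $c_1$; a useful sanity check is that, writing $F_{k+r-1}=F_{k+r+1}-F_{k+r}$, the whole statement collapses to the telescoping identity $\sum_{k=1}^nc_kF_{k+r}=\sum_{k=1}^nc_{k+1}F_{k+r+1}-c_{n+1}F_{n+r+1}+c_1F_{r+1}$, which holds for any sequence $c_k$. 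The only analytic point is that the weight now carries $(1-x)^{s-2}$ rather than $(1-x)^{s-1}$: the term-wise integrals converge for $\mathrm{Re}(s)>0$, and the identity then extends to all admissible $s,t\in\mathbb C\setminus\mathbb Z^-$ with $s+t\notin\mathbb Z^-$ through the Gamma-function continuation of the Beta integrals.
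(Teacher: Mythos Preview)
Your proof is correct and follows exactly the route the paper takes: set $a_k=F_{k+r}$, $b_k=(1-x)^k$ in \eqref{abel1}, multiply by $x^t(1-x)^{s-2}\ln x$, integrate, and read off the coefficients via \eqref{beta_der}. You have actually supplied more detail than the paper (which states the method in a single sentence), and your remark on convergence for $\mathrm{Re}(s)>0$ followed by analytic continuation is a welcome clarification the paper omits.
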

	\begin{proof}
		In~\eqref{abel1}, set $a_k=F_{k+r}$ and $b_k=(1-x)^k$, multiply through by $x^t(1-x)^{s-2}\ln x$ and integrate from 0 to 1. The result is
		\eqref{thm2_eq1} using \eqref{beta_der}. 
	\end{proof}

	\begin{theorem}
		For $n$ a positive integer, we have
		\begin{equation}\label{eq.xslb526}
			\sum_{k = 1}^n \frac{{H_k}}{k} F_{k - 2} F_{k + 1} = \sum_{k = 1}^{n - 1} \frac{{H_{k + 1} - 1}}{k(k + 1)} F_k^2 + \frac{{H_n}}{n} F_n^2,
		\end{equation}
		\begin{equation}\label{eq.qsl14h7}
			\sum_{k = 1}^n \frac{2^{2k}}{k}\frac{O_k}{\binom{{2k}}{k}} F_{k - 2} F_{k + 1} = 2\sum_{k = 1}^{n - 1} \frac{2^{2k}}{k(k + 1)}\frac{{\left( {O_{k + 1}  - 1} \right)}}{{\binom{{2\left( {k + 1} \right)}}{{k + 1}}}}F_k^2 + \frac{{2^{2n} }}{n}\frac{{O_n }}{{\binom{{2n}}{n}}}F_n^2 ,
		\end{equation}
		and more generally, for $r\in\mathbb Z$ and $s,t\in\mathbb C\setminus\mathbb Z^{-}$ such that $s+t\not\in\mathbb Z^{-}$,
		\begin{equation}\label{eq.c6bw3gq}
			\begin{split}
				\sum_{k = 1}^n {\frac{{H_{k + s + t}  - H_t }}{{\binom{{k + s + t}}{{t + 1}}}}F_{k + r - 1} F_{k + r + 2} }  &= \frac{{t + 1}}{{t + 2}}\sum_{k = 1}^n {\frac{{H_{k + s + t}  - H_{t + 1} }}{{\binom{{k + s + t}}{{t + 2}}}}F_{k + r}^2 }  - \frac{{H_{s + t}  - H_t }}{{\binom{{s + t}}{{t + 1}}}}F_{r + 1}^2\\
				&\qquad\qquad + \frac{{H_{n + s + t}  - H_t }}{{\binom{{n + s + t}}{{t + 1}}}}F_{n + r + 1}^2 .
			\end{split}
		\end{equation}
	\end{theorem}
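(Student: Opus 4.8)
The plan is to prove the general identity \eqref{eq.c6bw3gq} first and then recover \eqref{eq.xslb526} and \eqref{eq.qsl14h7} by specialization. The engine is the elementary telescoping identity
\[
F_{m+1}^2 - F_m^2 = (F_{m+1}-F_m)(F_{m+1}+F_m) = F_{m-1}F_{m+2},
\]
which shows that the product $F_{k+r-1}F_{k+r+2}$ on the left of \eqref{eq.c6bw3gq} is exactly the forward difference of $F_{k+r}^2$. Accordingly I would set $a_k = F_{k+r}^2$ and $b_k = (1-x)^k$ in Abel's formula \eqref{abel1}. Since $a_{k+1}-a_k = F_{k+r+1}^2-F_{k+r}^2 = F_{k+r-1}F_{k+r+2}$ and $b_k-b_{k+1} = x(1-x)^k$, this produces the polynomial identity
\[
\sum_{k=1}^n (1-x)^k F_{k+r-1}F_{k+r+2} = x\sum_{k=1}^n (1-x)^k F_{k+r+1}^2 + (1-x)^{n+1}F_{n+r+1}^2 - (1-x)F_{r+1}^2.
\]

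Next I would multiply through by $x^t(1-x)^{s-1}\ln x$ and integrate term-by-term over $[0,1]$, invoking \eqref{beta_der} of Lemma \ref{lem.integ} in the form $\int_0^1 x^u(1-x)^v\ln x\,dx = -(H_{u+v+1}-H_u)/[\binom{u+v+1}{u+1}(u+1)]$. On the left this produces the factor $-(H_{k+s+t}-H_t)/[\binom{k+s+t}{t+1}(t+1)]$, matching the left side of \eqref{eq.c6bw3gq} after clearing the overall constant $-1/(t+1)$. The first sum on the right, carrying the extra factor $x$, becomes an $F_{k+r+1}^2$-sum with parameters $H_{k+s+t+1}-H_{t+1}$ and $\binom{k+s+t+1}{t+2}$, while the two Abel boundary terms yield the expected $F_{n+r+1}^2$ and $F_{r+1}^2$ contributions but with harmonic and binomial indices all shifted up by one.

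The reconciliation is the step demanding care, and I expect it to be the main obstacle. To convert the $F_{k+r+1}^2$-sum into the $F_{k+r}^2$-sum of \eqref{eq.c6bw3gq}, I would reindex by replacing $k$ with $k-1$ and letting $k$ run from $2$ to $n+1$; restoring the limits $1,\dots,n$ then subtracts a $k=1$ term (of type $F_{r+1}^2$) and adds a $k=n+1$ term (of type $F_{n+r+1}^2$). Collecting these with the two Abel boundary terms, the coefficient of $F_{n+r+1}^2$ reduces to $\frac{t+1}{t+2}\frac{H_{W+1}-H_{t+1}}{\binom{W+1}{t+2}} + \frac{H_{W+1}-H_t}{\binom{W+1}{t+1}}$ with $W=n+s+t$, and the coefficient of $F_{r+1}^2$ to the negative of the same expression with $W=s+t$. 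Using the harmonic recurrence $H_z=H_{z-1}+1/z$ together with $\binom{W+1}{t+2}=\frac{W+1}{t+2}\binom{W}{t+1}$ and $\binom{W+1}{t+1}=\frac{W+1}{W-t}\binom{W}{t+1}$, a short computation collapses this combination to $(H_W-H_t)/\binom{W}{t+1}$, exactly the boundary coefficient appearing in \eqref{eq.c6bw3gq}; this cancellation of the index shift is the crux of the argument.

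Finally, the two displayed special cases follow by specialization with $r=-1$. Taking $(s,t)=(0,0)$ and using $H_0=0$, $\binom{k}{1}=k$, and $\binom{k}{2}=k(k-1)/2$ gives \eqref{eq.xslb526} once the vacuous $k=1$ term (which carries $F_0^2=0$) is discarded. Taking $(s,t)=(0,-1/2)$ converts the half-integer harmonic numbers into odd harmonic numbers via Lemma \ref{lem.ho} (notably $H_{k-1/2}-H_{-1/2}=2O_k$ and $H_{k-1/2}-H_{1/2}=2(O_k-1)$) and the half-integer binomial coefficients into central binomial coefficients via Lemma \ref{lem.binomial}; in particular $\binom{k-1/2}{1/2}=k\binom{2k}{k}/2^{2k-1}$, and the analogous evaluation of $\binom{k+1/2}{3/2}$ produces the factor $2^{2k}/[k(k+1)\binom{2k+2}{k+1}]$, which yields \eqref{eq.qsl14h7}.
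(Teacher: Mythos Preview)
Your proposal is correct and follows the same strategy as the paper: the telescoping identity $F_{m+1}^2-F_m^2=F_{m-1}F_{m+2}$, Abel summation, and term-wise integration against~\eqref{beta_der}. The one substantive difference is in execution. By taking $a_k=F_{k+r}^2$, $b_k=(1-x)^k$ in~\eqref{abel1} and then multiplying by $x^t(1-x)^{s-1}$, you land on an intermediate with $F_{k+r+1}^2$ in the sum and all harmonic/binomial indices shifted up by one; this forces the reindexing and the binomial--harmonic cancellation you flag as ``the crux.'' The paper instead (in effect) swaps the roles of $a$ and $b$ in Abel, obtaining the polynomial identity
\[
\sum_{k=1}^n (1-x)^{k+1}F_{k+r-1}F_{k+r+2}=x\sum_{k=1}^n(1-x)^kF_{k+r}^2-(1-x)F_{r+1}^2+(1-x)^{n+1}F_{n+r+1}^2,
\]
which already has $F_{k+r}^2$ in the sum; multiplying this by $x^t(1-x)^{s-2}$ and integrating yields~\eqref{eq.c6bw3gq} immediately, with no reconciliation needed. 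Your extra step is valid (your claimed collapse to $(H_W-H_t)/\binom{W}{t+1}$ checks out), but it is avoidable, so the reconciliation is not really the crux of the argument.
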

	\begin{proof}
		Here, the basic Fibonacci identity is
		\begin{equation*}
			F_{k - 1} F_{k + 2} = F_{k + 1}^2 - F_k^2.
		\end{equation*}
		In~\eqref{abel1}, set $a_k=F_{k+r}^2$ and $b_k=(1-x)^k$ to obtain
		\begin{equation*}
			\sum_{k = 1}^n (1 - x)^{k + 1} F_{k + r - 1} F_{k + r + 2} = \sum_{k = 1}^n (1 - x)^k x F_{k + r}^2 - (1 - x)F_{r + 1}^2
			+ (1 - x)^{n + 1} F_{n + r + 1}^2,
		\end{equation*}
		which upon multiplying through by $x^t(1-x)^{s-2}$ gives
		\begin{equation*}
			\begin{split}
				\sum_{k = 1}^n {x^t \left( {1 - x} \right)^{k + s - 1} F_{k + r - 1} F_{k + r + 2} }  &= \sum_{k = 1}^n {x^{t + 1} \left( {1 - x} \right)^{k + s - 2} F_{k + r}^2 }  - x^t \left( {1 - x} \right)^{s - 1} F_{r + 1}^2\\
				&\qquad + x^t \left( {1 - x} \right)^{s + n - 1} F_{n + r + 1}^2 ;
			\end{split}
		\end{equation*}
		from which term-wise integration produces~\eqref{eq.c6bw3gq} on account of Lemma~\ref{lem.integ}. Identities~\eqref{eq.xslb526} and~\eqref{eq.qsl14h7} are special cases of~\eqref{eq.c6bw3gq}, being evaluations respectively at $(s=0=t)$ and \mbox{$(s=0,t=-1/2)$}. Note the use of Lemmata~\ref{lem.ho} and~\ref{lem.binomial}.
		
		In deriving~\eqref{eq.xslb526} and~\eqref{eq.qsl14h7} we used the following limits:
		\begin{gather*}
			\lim_{s\to 0}\frac{H_{s + 1} - 1}{s} = \frac{\pi^2}{6} - 1,\\
			\lim_{r\to 0}\frac{O_{r + 1} - 1}{r} = \frac{\pi^2}{4} - 2.
		\end{gather*}
		which are easily derived by converting to the digamma function and using L'Hospital's rule.
	\end{proof}
	
	\begin{theorem}\label{thm1_Fib_squared}
		For all $r\in\mathbb Z$ and $s,t\in\mathbb C\setminus\mathbb Z^{-}$ such that $s+t\not\in\mathbb Z^{-}$, we have
		\begin{equation}
			\sum_{k = 1}^n \frac{F_{4(k+r)+2}}{\binom{k + t + s - 1}{t + 1}} =
			\frac{t + 1}{t + 2} \sum_{k = 1}^n \frac{F_{2(k+r)+2}^2}{\binom{k + t + s}{t + 2}} + \frac{F_{2(n+r)+2}^2}{\binom{n + t + s}{t + 1}}
			- \frac{F_{2(r+1)}^2}{\binom{t + s}{t + 1}}.
		\end{equation}
		In particular, we have for all $r\in\mathbb Z$ and $s\geq 1$
		\begin{equation}
			\sum_{k = 1}^n \frac{F_{4(k+r)+2}}{k + s - 1} = \sum_{k = 1}^n \frac{F_{2(k+r)+2}^2}{(k+s)(k+s-1)} + \frac{F_{2(n+r)+2}^2}{n+s}
			- \frac{F_{2(r+1)}^2}{s}.
		\end{equation}
	\end{theorem}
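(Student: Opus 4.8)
The plan is to follow the same Abel-summation-then-integration scheme used to prove~\eqref{eq.c6bw3gq}, the only genuinely new ingredient being the choice of the underlying Fibonacci identity. First I would establish the index-doubling identity
\[
F_{2m+2}^2 - F_{2m}^2 = F_{4m+2},
\]
valid for every integer $m$. This follows by factoring the difference of squares as $F_{2m+2}^2 - F_{2m}^2 = (F_{2m+2}-F_{2m})(F_{2m+2}+F_{2m})$, recognizing $F_{2m+2}-F_{2m}=F_{2m+1}$ and $F_{2m+2}+F_{2m}=L_{2m+1}$, and then applying $F_nL_n=F_{2n}$ to get $F_{2m+1}L_{2m+1}=F_{4m+2}$. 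Taking $m=k+r$ produces $F_{2(k+r)+2}^2 - F_{2(k+r)}^2 = F_{4(k+r)+2}$, which is precisely the combination linking the two sides of the theorem.

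With this identity in hand, I would apply~\eqref{abel1} with $a_k=F_{2(k+r)}^2$ and $b_k=(1-x)^k$. Then $a_{k+1}-a_k=F_{4(k+r)+2}$ by the identity above, $b_k-b_{k+1}=x(1-x)^k$, and the boundary terms are $a_{n+1}b_{n+1}=F_{2(n+r)+2}^2(1-x)^{n+1}$ and $a_1b_1=F_{2(r+1)}^2(1-x)$. This yields the polynomial identity
\[
\sum_{k=1}^n (1-x)^k F_{4(k+r)+2} = \sum_{k=1}^n x(1-x)^k F_{2(k+r)+2}^2 + (1-x)^{n+1} F_{2(n+r)+2}^2 - (1-x)F_{2(r+1)}^2.
\]

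Next I would multiply through by $x^t(1-x)^{s-2}$ and integrate term-by-term over $[0,1]$, invoking the Beta integral~\eqref{beta} exactly as in the proof of~\eqref{eq.c6bw3gq}. The four families of integrals evaluate respectively to $1/[\binom{k+t+s-1}{t+1}(t+1)]$, $1/[\binom{k+t+s}{t+2}(t+2)]$, $1/[\binom{n+t+s}{t+1}(t+1)]$ and $1/[\binom{t+s}{t+1}(t+1)]$; multiplying the resulting equation by $t+1$ clears the common factor and gives the stated general identity, the ratio $(t+1)/(t+2)$ arising from the mismatch between the $(t+1)$ and $(t+2)$ denominators. Finally, specializing to $t=0$, where $(t+1)/(t+2)=\tfrac12$, $\binom{k+s-1}{1}=k+s-1$, $\binom{k+s}{2}=\tfrac12(k+s)(k+s-1)$, and the remaining binomials reduce to linear factors, delivers the particular case for $s\geq 1$.

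Since the mechanics replicate the already-established~\eqref{eq.c6bw3gq}, I do not expect the integration or the bookkeeping to present any real obstacle. The one step demanding a moment of thought is locating the correct index-doubling identity $F_{2m+2}^2-F_{2m}^2=F_{4m+2}$ and aligning the choice $a_k=F_{2(k+r)}^2$ with the shifted squares $F_{2(k+r)+2}^2$ that surface on the right-hand side; everything thereafter is routine.
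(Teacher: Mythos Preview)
Your proposal is correct and follows essentially the same approach as the paper: Abel summation~\eqref{abel1} with $a_k=F_{2(k+r)}^2$ and $b_k=(1-x)^k$, multiplication by $x^t(1-x)^{s-2}$, and termwise integration via~\eqref{beta}. The only cosmetic difference is that the paper obtains $F_{2k+2}^2-F_{2k}^2=F_{4k+2}$ by specializing Howard's identity $F_{a+b}^2-F_{a-b}^2=F_{2a}F_{2b}$, whereas you derive it directly via the difference-of-squares factorization and $F_mL_m=F_{2m}$.
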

	\begin{proof}
		The basic Fibonacci identity is
		\begin{equation*}
			F_{2k+2}^2 - F_{2k}^2 = F_{4k+2},
		\end{equation*}
		which is a special case of an identity of Howard \cite{Howard03}
		\begin{equation}\label{Howard_id}
			F_{a+b}^2 - F_{a-b}^2 = F_{2a} F_{2b}.
		\end{equation}
		Hence, we choose in \eqref{abel1} $a_k=F_{2(k+r)}^2$ and $b_k=(1-x)^k$. Then, after multiplying through by $x^t (1-x)^{s-2}$ we get
		\begin{equation*}
			\begin{split}
				\sum_{k = 1}^n x^t (1-x)^{k + s - 2} F_{4(k + r) + 2} &= \sum_{k = 1}^n F_{2(k + r) + 2}^2 x^{t+1} (1-x)^{k+s-2}
				+ F_{2(n + r) + 2}^2 x^{t} (1-x)^{k+s-1} \\
				&\qquad - F_{2(r + 1)}^2 x^t (1-x)^{s-1}.
			\end{split}
		\end{equation*}
		The result is now a consequence of \eqref{beta}.
	\end{proof}
	A Fibonacci-harmonic identity is obtained by exchanging \eqref{beta} by \eqref{beta_der}.
	
	\begin{theorem}\label{thm2_Fib_squared}
		For all $r\in\mathbb Z$ and $s,t\in\mathbb C\setminus\mathbb Z^{-}$ such that $s+t\not\in\mathbb Z^{-}$, we have
		\begin{equation}
			\begin{split}
				\sum_{k = 1}^n \frac{H_t - H _{k + t + s - 1}}{\binom{k + t + s - 1}{t + 1}} F_{4(k+r)+2} &=
				\frac{t + 1}{t + 2} \sum_{k = 1}^n \frac{H_{t+1} - H_{k + t + s}}{\binom{k + t + s}{t + 2}} F_{2(k+r)+2}^2 \\
				&\qquad + \frac{H_t - H_{n + t + s}}{\binom{n + t + s}{t + 1}} F_{2(n+r)+2}^2 - \frac{H_t - H_{t + s}}{\binom{t + s}{t + 1}} F_{2(r+1)}^2.
			\end{split}
		\end{equation}
		In particular, we have for all $r\in\mathbb Z$ and $s\geq 1$
		\begin{equation}
			\sum_{k = 1}^n \frac{H_{k+s-1}}{k + s - 1} F_{4(k+r)+2} = \sum_{k = 1}^n \frac{H_{k+s}-1}{(k+s)(k+s-1)} F_{2(k+r)+2}^2
			+ \frac{H_{n+s}}{n+s} F_{2(n+r)+2}^2 - \frac{H_s}{s} F_{2(r+1)}^2.
		\end{equation}
	\end{theorem}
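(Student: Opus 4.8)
The plan is to recycle the derivation of Theorem~\ref{thm1_Fib_squared} verbatim up to the integration step and then, exactly as flagged in the sentence preceding the statement, replace the plain Beta weight~\eqref{beta} by the log-weighted version~\eqref{beta_der}. Concretely, I would again invoke the Howard-type identity $F_{2(k+r)+2}^2 - F_{2(k+r)}^2 = F_{4(k+r)+2}$ and feed $a_k = F_{2(k+r)}^2$, $b_k = (1-x)^k$ into Abel's formula~\eqref{abel1}, obtaining the polynomial identity
\begin{equation*}
\sum_{k=1}^n (1-x)^k F_{4(k+r)+2} = \sum_{k=1}^n x(1-x)^k F_{2(k+r)+2}^2 + (1-x)^{n+1} F_{2(n+r)+2}^2 - (1-x) F_{2(r+1)}^2 .
\end{equation*}

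Next I would multiply through by $x^t(1-x)^{s-2}\ln x$ and integrate term by term over $[0,1]$ (legitimate since the sum is finite and the restrictions on $s,t$ keep every integral convergent), applying~\eqref{beta_der} to each of the four resulting families. The bookkeeping reduces to reading off $(u,v)$: the left-hand summand has $(u,v)=(t,\,k+s-2)$, the first right-hand summand has the shifted $(u,v)=(t+1,\,k+s-2)$, and the two boundary terms carry $(u,v)=(t,\,n+s-1)$ and $(u,v)=(t,\,s-1)$. Since~\eqref{beta_der} reads $\int_0^1 x^u(1-x)^v\ln x\,dx = -(H_{u+v+1}-H_u)/[\binom{u+v+1}{u+1}(u+1)]$, the left-hand side becomes $\sum_k (H_t-H_{k+t+s-1})F_{4(k+r)+2}/[\binom{k+t+s-1}{t+1}(t+1)]$, while the first right-hand term acquires the data $H_{t+1}-H_{k+t+s}$ over $\binom{k+t+s}{t+2}(t+2)$ and the two boundary terms reproduce the stated $n$- and $s$-contributions, each still divided by $t+1$. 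Multiplying the whole identity through by $(t+1)$ then clears the uniform $1/(t+1)$ factors and turns the lone $1/(t+2)$ into $(t+1)/(t+2)$, which is precisely the shape asserted in the theorem.

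For the particular case I would set $t=0$, which is admissible once $s\ge 1$ so that no denominator $\binom{\cdot}{1}$ or $\binom{\cdot}{2}$ vanishes for $k\ge 1$ and every harmonic argument stays valid. Using $H_0=0$, $H_1=1$, $\binom{m}{1}=m$ and $\binom{m}{2}=m(m-1)/2$, the first right-hand binomial converts via $1/\binom{k+s}{2}\to 2/[(k+s)(k+s-1)]$, every term collapses to a rational multiple of a Fibonacci square, and after a global sign change the identity rearranges into the stated special form.

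I do not anticipate a genuine obstacle: the construction is essentially mechanical once Theorem~\ref{thm1_Fib_squared} is in hand. The only points demanding care are clerical — correctly matching $(u,v)$ to the harmonic-number arguments in~\eqref{beta_der} across the four term types, remembering to normalize by multiplying through by $(t+1)$, and tracking both the sign flip and the $\binom{\cdot}{2}$-to-rational conversion in the $t=0$ specialization.
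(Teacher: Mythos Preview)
Your proposal is correct and follows exactly the route the paper indicates: it reruns the Abel-summation and Howard-identity setup from Theorem~\ref{thm1_Fib_squared} and then integrates against the $\ln x$ weight via~\eqref{beta_der} in place of~\eqref{beta}, with the $(u,v)$ bookkeeping, the final multiplication by $(t+1)$, and the $t=0$ specialization all handled accurately. This is precisely what the paper's one-line proof (``exchanging~\eqref{beta} by~\eqref{beta_der}'') amounts to.
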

	
	\begin{remark}
		The Lucas counterparts of Theorems \ref{thm1_Fib_squared} and \ref{thm2_Fib_squared} are obtained straightforwardly using another
		identity of Howard \cite{Howard03}, namely,
		\begin{equation*}
			L_{a+b}^2 - L_{a-b}^2 = 5 F_{2a} F_{2b}.
		\end{equation*}
	\end{remark}
	
	We continue with some sums involving reciprocals of products of Fibonacci numbers.
	
	\begin{theorem}\label{thm1_rec}
		For all $r\geq 1$ and $s\geq 0$ we have
		\begin{equation}\label{eq1_thm1_rec}
			\sum_{k=1}^n \frac{(-1)^{rk}}{F_{rk} F_{r(k+1)}} H_{k+s-1} = \frac{1}{F_r^2} \left ( \sum_{k=1}^n \frac{F_{r(k+2)}}{F_{r(k+1)}}\frac{1}{k+s}
			- \frac{F_{r(n+2)}}{F_{r(n+1)}} H_{n+s} + L_r H_s \right ).
		\end{equation}
		In particular,
		\begin{equation}
			\sum_{k=1}^n \frac{(-1)^{k}}{F_{k} F_{k+1}} H_{k} = \sum_{k=1}^n \frac{F_{k}}{F_{k+1}}\frac{1}{k+1} - \frac{F_{n}}{F_{n+1}} H_{n+1}.
		\end{equation}
	\end{theorem}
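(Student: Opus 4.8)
The plan is to convert the summand $(-1)^{rk}/(F_{rk}F_{r(k+1)})$ into a telescoping difference of ratios of Fibonacci numbers, and then feed that difference into Abel's formula \eqref{abel1} against the harmonic sequence. The algebraic fact I would establish first is the Catalan-type identity $F_{r(k+1)}^2 - F_{rk}F_{r(k+2)} = (-1)^{rk}F_r^2$, obtained by setting $n=r(k+1)$ in $F_n^2 - F_{n-r}F_{n+r} = (-1)^{n-r}F_r^2$. Dividing through by $F_{rk}F_{r(k+1)}$ rearranges this into
$$\frac{(-1)^{rk}}{F_{rk}F_{r(k+1)}} = \frac{1}{F_r^2}\left(\frac{F_{r(k+1)}}{F_{rk}} - \frac{F_{r(k+2)}}{F_{r(k+1)}}\right),$$
so the summand is, up to the constant $1/F_r^2$, the backward difference $a_k - a_{k+1}$ of the sequence $a_k := F_{r(k+1)}/F_{rk}$.

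With this in hand I would set $a_k = F_{r(k+1)}/F_{rk}$ and $b_k = H_{k+s-1}$ in \eqref{abel1}. The left-hand side of \eqref{abel1} then becomes $-F_r^2\sum_{k=1}^n (-1)^{rk}H_{k+s-1}/(F_{rk}F_{r(k+1)})$ by the displayed identity, while on the right the telescoping partner satisfies $b_k - b_{k+1} = H_{k+s-1} - H_{k+s} = -1/(k+s)$. The boundary data are $a_1 = F_{2r}/F_r = L_r$ (using $F_{2r}=F_rL_r$), $a_{n+1} = F_{r(n+2)}/F_{r(n+1)}$, $b_1 = H_s$ and $b_{n+1} = H_{n+s}$. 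Substituting these into \eqref{abel1} and dividing through by $-F_r^2$ yields \eqref{eq1_thm1_rec} directly.

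For the stated special case I would put $r=1$ and $s=1$, so that $F_r^2 = L_r = H_s = 1$, and the general identity reduces to $\sum_{k=1}^n (-1)^k H_k/(F_kF_{k+1}) = \sum_{k=1}^n \frac{F_{k+2}}{F_{k+1}}\frac{1}{k+1} - \frac{F_{n+2}}{F_{n+1}}H_{n+1} + 1$. The final cosmetic simplification uses $F_{k+2}/F_{k+1} = 1 + F_k/F_{k+1}$ together with $\sum_{k=1}^n 1/(k+1) = H_{n+1}-1$; the three ``extra'' pieces $H_{n+1}-1$, $-H_{n+1}$ and $+1$ cancel, leaving the claimed closed form.

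The only genuine obstacle is spotting the correct telescoping identity, i.e.\ recognising that Catalan's identity is exactly what turns the reciprocal-product summand into a first difference of the ratios $F_{r(k+1)}/F_{rk}$. Once that is in place, the computation is the same Abel-summation mechanism used throughout this section, and the sign bookkeeping (the factor $(-1)^{rk}$ against the minus sign coming from $b_k-b_{k+1}$) is the only point that demands care.
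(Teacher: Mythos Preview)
Your proof is correct and follows essentially the same route as the paper: both arguments set $a_k=F_{r(k+1)}/F_{rk}$ and $b_k=H_{k+s-1}$ in Abel's formula~\eqref{abel1}, and both rely on the same telescoping relation $a_{k+1}-a_k=(-1)^{rk+1}F_r^2/(F_{rk}F_{r(k+1)})$. The only cosmetic difference is that the paper derives this relation from the identity $(-1)^tF_aF_b=F_{t+a}F_{t+b}-F_tF_{t+a+b}$ with $t=rk$, $a=b=r$, whereas you obtain it from Catalan's identity; your extra step simplifying the $r=s=1$ case via $F_{k+2}/F_{k+1}=1+F_k/F_{k+1}$ is a nice touch that the paper omits.
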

	\begin{proof}
		The Fibonacci identity
		\begin{equation*}
			(-1)^t F_a F_b = F_{t+a}F_{t+b} - F_t F_{t+a+b}
		\end{equation*}
		produces after making the replacements $t\mapsto rk$ and $a=b=r$ the identity
		\begin{equation*}
			\frac{F_{r(k+2)}}{F_{r(k+1)}} - \frac{F_{r(k+1)}}{F_{rk}} = (-1)^{rk+1} \frac{F_r^2}{F_{rk} F_{r(k+1)}}.
		\end{equation*}
		This shows that we can choose $a_k=F_{r(k+1)}/F_{rk}$ and $b_k=H_{k+s-1}$, and the result follows from \eqref{abel1}.
	\end{proof}
	
	There also exists an expression for the left hand side of \eqref{eq1_thm1_rec} involving Lucas numbers, which we state in the next theorem.
	
	\begin{theorem}\label{thm2_rec}
		For all $r\geq 1$ and $s\geq 0$
		\begin{equation}\label{eq2_thm1_rec}
			\sum_{k=1}^n \frac{(-1)^{rk}}{F_{rk} F_{r(k+1)}} H_{k+s-1} = \frac{1}{2 F_r} \left ( \sum_{k=1}^n \frac{L_{r(k+1)}}{F_{r(k+1)}}\frac{1}{k+s}
			- \frac{L_{r(n+1)}}{F_{r(n+1)}} H_{n+s} + \frac{L_r}{F_r} H_s \right ).
		\end{equation}
	\end{theorem}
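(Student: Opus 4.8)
The plan is to mirror the derivation of Theorem~\ref{thm1_rec}, the only change being the choice of the telescoping sequence $a_k$. In that theorem the sequence $a_k = F_{r(k+1)}/F_{rk}$ was used because its forward difference reproduces $(-1)^{rk}/(F_{rk}F_{r(k+1)})$. To make Lucas numbers appear on the right-hand side of \eqref{eq2_thm1_rec}, I would instead set
\[
a_k = \frac{L_{rk}}{F_{rk}}, \qquad b_k = H_{k+s-1},
\]
and again apply Abel's formula~\eqref{abel1}.

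First I would compute the forward difference $a_{k+1}-a_k$. Written over the common denominator $F_{rk}F_{r(k+1)}$, its numerator is $L_{r(k+1)}F_{rk} - L_{rk}F_{r(k+1)}$. The key ingredient is the mixed Fibonacci--Lucas identity
\[
L_m F_n - L_n F_m = 2(-1)^m F_{n-m},
\]
which is readily checked from the Binet formulas~\eqref{Binet} (or found in Vajda~\cite{Vajda}). Applying it with $m=rk$ and $n=r(k+1)$, so that $n-m=r$, gives
\[
a_{k+1}-a_k = \frac{L_{r(k+1)}F_{rk} - L_{rk}F_{r(k+1)}}{F_{rk}F_{r(k+1)}} = -\frac{2F_r\,(-1)^{rk}}{F_{rk}F_{r(k+1)}},
\]
so that $\dfrac{(-1)^{rk}}{F_{rk}F_{r(k+1)}} = -\dfrac{1}{2F_r}(a_{k+1}-a_k)$.

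Next I would feed this into~\eqref{abel1}. Since $b_k - b_{k+1} = H_{k+s-1}-H_{k+s} = -\tfrac{1}{k+s}$, Abel's formula yields
\[
\sum_{k=1}^n H_{k+s-1}(a_{k+1}-a_k) = -\sum_{k=1}^n \frac{a_{k+1}}{k+s} + a_{n+1}H_{n+s} - a_1 H_s.
\]
Multiplying through by $-\tfrac{1}{2F_r}$ and substituting $a_{k+1}=L_{r(k+1)}/F_{r(k+1)}$, $a_{n+1}=L_{r(n+1)}/F_{r(n+1)}$ and $a_1=L_r/F_r$ then produces~\eqref{eq2_thm1_rec} directly.

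The computation is essentially routine once the base identity is in hand; the only delicate point is the sign bookkeeping (the factor $-1/(2F_r)$ together with the sign of $n-m$ in the mixed identity), so the main obstacle is simply to pin down $L_m F_n - L_n F_m = 2(-1)^m F_{n-m}$ with the correct sign. Everything else telescopes exactly as in the Fibonacci case of Theorem~\ref{thm1_rec}.
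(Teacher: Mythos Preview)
Your proof is correct and follows essentially the same route as the paper: choose $a_k=L_{rk}/F_{rk}$, $b_k=H_{k+s-1}$, compute $a_{k+1}-a_k$ via the mixed identity $F_aL_b-L_aF_b=2(-1)^bF_{a-b}$ (your version is the same identity with relabelled indices), and apply Abel's formula~\eqref{abel1}. The sign bookkeeping you flag is handled correctly.
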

	\begin{proof}
		Here, the relevant Fibonacci identity is
		\begin{equation*}
			F_a L_b - L_{a} F_{b} = 2 (-1)^b F_{a-b}.
		\end{equation*}
		After making the replacements $a\mapsto rk$ and $b\mapsto r(k+1)$, and keeping in mind that $F_{-n}=(-1)^{n+1}F_n$ we get the identity
		\begin{equation*}
			\frac{L_{r(k+1)}}{F_{r(k+1)}} - \frac{L_{rk}}{F_{rk}} = (-1)^{rk+1} \frac{2 F_r}{F_{rk} F_{r(k+1)}}.
		\end{equation*}
		This shows that we can choose $a_k=L_{rk}/F_{rk}$ and $b_k=H_{k+s-1}$, and the result follows from \eqref{abel1}.
	\end{proof}
	
	\begin{corollary}
		For all $r\geq 1$ and $s\geq 0$ we have the relation
		\begin{equation}
			\sum_{k=1}^n \frac{L_{r(k+1)}}{F_{r(k+1)}}\frac{1}{k+s} = \frac{2}{F_r} \sum_{k=1}^n \frac{F_{r(k+2)}}{F_{r(k+1)}}\frac{1}{k+s}
			+ \left (L_{r(n+1)} - \frac{2}{F_r} F_{r(n+2)} \right ) \frac{H_{n+s}}{F_{r(n+1)}} + \frac{L_r}{F_r} H_s.
		\end{equation}
	\end{corollary}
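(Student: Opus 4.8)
The plan is to observe that Theorems~\ref{thm1_rec} and~\ref{thm2_rec} furnish two genuinely different closed forms for one and the same object, namely the Fibonacci-harmonic sum $\sum_{k=1}^n \frac{(-1)^{rk}}{F_{rk}F_{r(k+1)}}H_{k+s-1}$ appearing on the left-hand side of both~\eqref{eq1_thm1_rec} and~\eqref{eq2_thm1_rec}. Since the two left-hand sides are literally identical under the common hypotheses $r\geq 1$ and $s\geq 0$, their right-hand sides must agree, and equating them gives a relation linking the two hitherto unevaluated sums $\sum_{k=1}^n \frac{F_{r(k+2)}}{F_{r(k+1)}}\frac{1}{k+s}$ and $\sum_{k=1}^n \frac{L_{r(k+1)}}{F_{r(k+1)}}\frac{1}{k+s}$. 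Solving that relation for the Lucas-type sum will deliver the assertion, so no new analytic input is needed beyond the two theorems already proved.

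Concretely, I would first set the right-hand side of~\eqref{eq1_thm1_rec} equal to the right-hand side of~\eqref{eq2_thm1_rec}. The former carries an overall factor $1/F_r^2$ and the latter an overall factor $1/(2F_r)$, so I would clear these by multiplying through by $2F_r$. This is the clean step: the prefactor $1/F_r^2$ becomes $2/F_r$, the prefactor $1/(2F_r)$ becomes $1$, and the Lucas sum is thereby left with unit coefficient, ready to be isolated.

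Next I would transpose every term except $\sum_{k=1}^n \frac{L_{r(k+1)}}{F_{r(k+1)}}\frac{1}{k+s}$ to the opposite side and group by type. The terms proportional to $H_{n+s}$, namely $-\frac{2}{F_r}\frac{F_{r(n+2)}}{F_{r(n+1)}}H_{n+s}$ from~\eqref{eq1_thm1_rec} and $\frac{L_{r(n+1)}}{F_{r(n+1)}}H_{n+s}$ from~\eqref{eq2_thm1_rec}, combine to $\bigl(L_{r(n+1)}-\frac{2}{F_r}F_{r(n+2)}\bigr)\frac{H_{n+s}}{F_{r(n+1)}}$. The two $H_s$ contributions, $\frac{2L_r}{F_r}H_s$ and $-\frac{L_r}{F_r}H_s$, collapse to a single $\frac{L_r}{F_r}H_s$, while the Fibonacci sum keeps its coefficient $2/F_r$. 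These are precisely the three terms in the stated right-hand side.

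The main obstacle here is not conceptual but clerical: the whole argument is bookkeeping, and the only place to exercise care is in tracking signs and coefficients of the $H_{n+s}$ and $H_s$ pieces after the denominators have been cleared. One should also note explicitly that the manipulation is legitimate exactly because the hypotheses of the corollary, $r\geq 1$ and $s\geq 0$, coincide with those of Theorems~\ref{thm1_rec} and~\ref{thm2_rec}, so both representations are simultaneously valid and may be equated term for term.
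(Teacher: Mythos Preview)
Your proposal is correct and is exactly the intended argument: the corollary is placed immediately after Theorems~\ref{thm1_rec} and~\ref{thm2_rec} precisely because it follows by equating their common left-hand sides and solving for the Lucas sum, and your bookkeeping of the $H_{n+s}$ and $H_s$ terms is accurate.
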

	
	\begin{remark}
		The Lucas counterpart of Theorem \ref{thm1_rec} can be derived using the identity
		\begin{equation*}
			5 (-1)^{t+1} F_a F_b = L_{t+a}L_{t+b} - L_t L_{t+a+b}.
		\end{equation*}
	\end{remark}
	
	\begin{theorem}\label{thm3_rec}
		For all odd $r\geq 1$ and all $s\geq 0$ we have
		\begin{equation}\label{eq1_thm3_rec}
			\sum_{k=1}^n \frac{F_{r(2k+1)}}{F_{2rk} F_{2r(k+1)}} H_{k+s-1} = \frac{1}{L_r} \left ( \sum_{k=1}^n \frac{1}{F_{2r(k+1)}}\frac{1}{k+s}
			- \frac{H_{n+s}}{F_{2r(n+1)}}  + \frac{H_s}{F_{2r}} \right ).
		\end{equation}
		In particular,
		\begin{equation}
			\sum_{k=1}^n \frac{F_{2k+1}}{F_{2k} F_{2k+2}} H_{k} = \sum_{k=1}^n \frac{1}{F_{2k+2}}\frac{1}{k+1} - \frac{H_{n+1}}{F_{2n+2}} +1.
		\end{equation}
	\end{theorem}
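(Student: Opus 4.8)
The plan is to follow verbatim the Abel-summation template already used for Theorems~\ref{thm1_rec} and~\ref{thm2_rec}: locate a Fibonacci identity that rewrites the summand as a telescoping difference, then feed it into~\eqref{abel1} with $b_k = H_{k+s-1}$. The whole exercise reduces to identifying the correct choice of $a_k$.

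The natural starting point is the subtraction formula $F_{m+n}-F_{m-n}=L_n F_m$, which holds precisely when $n$ is odd (this is where the hypothesis that $r$ is odd enters, and it explains that restriction in the statement). Setting $m=r(2k+1)$ and $n=r$ gives
\begin{equation*}
	F_{2r(k+1)} - F_{2rk} = L_r\, F_{r(2k+1)},
\end{equation*}
and dividing through by $F_{2rk}F_{2r(k+1)}$ produces
\begin{equation*}
	\frac{1}{F_{2r(k+1)}} - \frac{1}{F_{2rk}} = -L_r\,\frac{F_{r(2k+1)}}{F_{2rk}\,F_{2r(k+1)}}.
\end{equation*}
This is exactly the telescoping shape needed: with $a_k = 1/F_{2rk}$ the summand equals $-\tfrac{1}{L_r}(a_{k+1}-a_k)$, so I would choose $a_k = 1/F_{2rk}$ and $b_k = H_{k+s-1}$ in~\eqref{abel1}.

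It then remains to read off the pieces. The telescoped sum on the right of~\eqref{abel1} involves $b_k - b_{k+1} = H_{k+s-1} - H_{k+s} = -\tfrac{1}{k+s}$ multiplied by $a_{k+1} = 1/F_{2r(k+1)}$, while the boundary contributions are $a_{n+1}b_{n+1} = H_{n+s}/F_{2r(n+1)}$ and $a_1 b_1 = H_s/F_{2r}$. Collecting these and multiplying through by $-1/L_r$ yields~\eqref{eq1_thm3_rec}; the displayed special case is the instance $r=1$, $s=1$, where $L_1 = F_2 = 1$ so that the last term $H_s/F_{2r}$ becomes $+1$.

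I expect no genuine obstacle here, since the argument is structurally identical to the two preceding theorems; the only real content is recognizing the right subtraction identity and the parity condition on $r$ that legitimizes it. The one point to watch is the sign bookkeeping, because the factor $-L_r$ sits in front of the telescoping difference and must be tracked carefully through the division at the end to land on the stated signs of the three terms inside the parentheses.
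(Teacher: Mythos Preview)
Your proposal is correct and follows essentially the same approach as the paper: the paper invokes the identity $L_a F_b = F_{a+b} + (-1)^a F_{b-a}$ with $a=r$ odd and $b=r(2k+1)$, which is exactly your subtraction formula in a slightly different guise, and then makes the identical choices $a_k = 1/F_{2rk}$, $b_k = H_{k+s-1}$ in~\eqref{abel1}. Your sign tracking and the identification of the special case $r=s=1$ are both accurate.
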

	\begin{proof}
		The basic Fibonacci identity that we need is
		\begin{equation*}
			L_a F_b = F_{a+b} + (-1)^a F_{b-a}.
		\end{equation*}
		Now, with $a=r$ odd and $b=r(2k+1)$ gives
		\begin{equation*}
			\frac{1}{F_{2r(k+1)}} - \frac{1}{F_{2rk}} = - \frac{L_r F_{r(2k+1)}}{F_{2rk} F_{2r(k+1)}}.
		\end{equation*}
		This shows that we can choose $a_k=1/F_{2rk}$, $r$ odd, and $b_k=H_{k+s-1}$, and the result follows from \eqref{abel1}.
	\end{proof}
	
	We note that the identities of Howard can also be applied to produce reciprocal sums involving squares of Fibonacci numbers.
	
	\begin{theorem}\label{thm4_rec}
		For all even $r\geq 2$ and all $s\geq 0$ we have
		\begin{equation}\label{eq1_thm4_rec}
			\sum_{k=1}^n \frac{F_{r(2k+1)}}{F_{rk}^2 F_{r(k+1)}^2} H_{k+s-1} = \frac{1}{F_r} \left ( \sum_{k=1}^n \frac{1}{F_{r(k+1)}^2}\frac{1}{k+s}
			- \frac{H_{n+s}}{F_{r(n+1)}^2}  + \frac{H_s}{F_{r}^2} \right ).
		\end{equation}
		In particular,
		\begin{equation}
			\sum_{k=1}^n \frac{F_{4k+2}}{F_{2k}^2 F_{2k+2}^2} H_{k} = \sum_{k=1}^n \frac{1}{F_{2k+2}^2}\frac{1}{k+1} - \frac{H_{n+1}}{F_{2n+2}^2} +1.
		\end{equation}
	\end{theorem}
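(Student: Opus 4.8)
The plan is to follow exactly the template established in Theorems \ref{thm1_rec}, \ref{thm2_rec} and \ref{thm3_rec}: locate a Fibonacci identity that realizes the sequence $1/F_{rk}^2$ as a telescoping object, and then invoke Abel's formula \eqref{abel1} with the choices $a_k = 1/F_{rk}^2$ and $b_k = H_{k+s-1}$. The remark immediately preceding the statement signals the correct tool, namely Howard's identity \eqref{Howard_id}, $F_{a+b}^2 - F_{a-b}^2 = F_{2a}F_{2b}$.

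First I would form the difference $a_{k+1} - a_k = 1/F_{r(k+1)}^2 - 1/F_{rk}^2 = (F_{rk}^2 - F_{r(k+1)}^2)/(F_{rk}^2 F_{r(k+1)}^2)$ and rewrite the numerator through \eqref{Howard_id}. Imposing $a+b = r(k+1)$ and $a-b = rk$ forces $2a = r(2k+1)$ and $2b = r$, so Howard's identity gives $F_{r(k+1)}^2 - F_{rk}^2 = F_{r(2k+1)} F_r$ and hence the telescoping relation
\begin{equation*}
\frac{1}{F_{r(k+1)}^2} - \frac{1}{F_{rk}^2} = -\frac{F_r\, F_{r(2k+1)}}{F_{rk}^2 F_{r(k+1)}^2}.
\end{equation*}

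The only genuinely substantive point — and what I expect to be the main obstacle — is the legitimacy of the indices. Howard's identity demands $a = r(2k+1)/2$ and $b = r/2$, and these are integers precisely when $r$ is even. This is exactly the hypothesis ``$r \geq 2$ even'' in the statement, so the parity restriction is not incidental but a structural consequence of the half-index split; I would flag it explicitly, contrasting it with the odd-$r$ requirement of Theorem \ref{thm3_rec}, which instead came from the sign factor $(-1)^a$ in $L_a F_b = F_{a+b} + (-1)^a F_{b-a}$.

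With the telescoping identity secured, the remainder is mechanical. Substituting $a_k = 1/F_{rk}^2$ and $b_k = H_{k+s-1}$ into \eqref{abel1}, the left-hand side becomes $-F_r \sum_{k=1}^n F_{r(2k+1)} H_{k+s-1}/(F_{rk}^2 F_{r(k+1)}^2)$, while on the right one uses $b_k - b_{k+1} = H_{k+s-1} - H_{k+s} = -1/(k+s)$ together with the boundary terms $a_{n+1}b_{n+1} = H_{n+s}/F_{r(n+1)}^2$ and $a_1 b_1 = H_s/F_r^2$. Dividing through by $-F_r$ yields \eqref{eq1_thm4_rec}. The stated particular case then follows upon setting $r = 2$ and $s = 1$ and using $F_2 = 1$ and $H_1 = 1$.
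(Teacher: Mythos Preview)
Your proposal is correct and follows essentially the same route as the paper: apply Howard's identity \eqref{Howard_id} with $a=rk+r/2$ and $b=r/2$ (equivalently your $2a=r(2k+1)$, $2b=r$) to obtain the telescoping relation for $1/F_{rk}^2$, then feed $a_k=1/F_{rk}^2$ and $b_k=H_{k+s-1}$ into \eqref{abel1}. Your explicit discussion of why the parity hypothesis on $r$ is forced by the half-integer indices is a welcome addition that the paper leaves implicit.
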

	\begin{proof}
		Apply the first of Howard's identities \eqref{Howard_id} with $a=rk+r/2$, $r$ even, and $b=r/2$ to get
		\begin{equation*}
			\frac{1}{F_{r(k+1)}^2} - \frac{1}{F_{rk}^2} = - \frac{F_r F_{r(2k+1)}}{F_{rk}^2 F_{r(k+1)}^2}.
		\end{equation*}
		Choose $a_k=1/F_{rk}^2$, $r$ even, and $b_k=H_{k+s-1}$, and the result follows from \eqref{abel1}.
	\end{proof}
	
	\begin{theorem}\label{thm5_rec}
		For all even $r\geq 2$ and all $s\geq 0$ we have
		\begin{equation}\label{eq1_thm5_rec}
			\sum_{k=1}^n \frac{F_{2r(k+1)}}{F_{rk} F_{r(k+1)}^2 F_{r(k+2)}} H_{k+s-1} = \frac{1}{F_r} \left ( \sum_{k=1}^n \frac{1}{F_{r(k+1)} F_{r(k+2)}}\frac{1}{k+s} - \frac{H_{n+s}}{F_{r(n+1)} F_{r(n+2)}} + \frac{H_s}{F_{r} F_{2r}} \right ).
		\end{equation}
		In particular,
		\begin{equation}
			\sum_{k=1}^n \frac{F_{4k+4}}{F_{2k} F_{2k+2}^2 F_{2k+4}} H_{k} = \sum_{k=1}^n \frac{1}{F_{2k+2} F_{2k+4}}\frac{1}{k+1}
			- \frac{H_{n+1}}{F_{2n+2} F_{2n+4}} + \frac{1}{3}.
		\end{equation}
	\end{theorem}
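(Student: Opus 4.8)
The plan is to follow the template already established in Theorems~\ref{thm3_rec} and~\ref{thm4_rec}: locate a sequence $(a_k)$ whose forward difference $a_{k+1}-a_k$ reproduces, up to a constant, the Fibonacci quotient multiplying $H_{k+s-1}$ on the left-hand side, and then feed $a_k$ together with $b_k=H_{k+s-1}$ into Abel's formula~\eqref{abel1}. Reading off the two reciprocal products $F_{r(k+1)}F_{r(k+2)}$ and $F_{rk}F_{r(k+1)}$ that appear on the right-hand side of~\eqref{eq1_thm5_rec}, I would choose
\[
a_k=\frac{1}{F_{rk}F_{r(k+1)}}.
\]

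First I would compute the forward difference and clear denominators,
\[
a_{k+1}-a_k=\frac{1}{F_{r(k+1)}F_{r(k+2)}}-\frac{1}{F_{rk}F_{r(k+1)}}=\frac{F_{rk}-F_{r(k+2)}}{F_{rk}F_{r(k+1)}F_{r(k+2)}}.
\]
The key step is to close the numerator $F_{rk}-F_{r(k+2)}$ into a single Fibonacci--Lucas product. Applying the companion identity $F_{a+b}-(-1)^bF_{a-b}=L_aF_b$ with $a=r(k+1)$ and $b=r$, and using that $r$ is \emph{even} so that $(-1)^r=1$, gives $F_{r(k+2)}-F_{rk}=L_{r(k+1)}F_r$. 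Combining this with $F_{2m}=L_mF_m$ at $m=r(k+1)$, that is $L_{r(k+1)}=F_{2r(k+1)}/F_{r(k+1)}$, I obtain the clean telescoping relation
\[
\frac{1}{F_{r(k+1)}F_{r(k+2)}}-\frac{1}{F_{rk}F_{r(k+1)}}=-\frac{F_rF_{2r(k+1)}}{F_{rk}F_{r(k+1)}^2F_{r(k+2)}},
\]
which exhibits exactly the summand of~\eqref{eq1_thm5_rec} scaled by $-F_r$.

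With this difference in hand the remainder is routine. I would insert $a_k$ and $b_k=H_{k+s-1}$ into~\eqref{abel1}, noting $b_k-b_{k+1}=H_{k+s-1}-H_{k+s}=-1/(k+s)$, $b_{n+1}=H_{n+s}$, $b_1=H_s$, together with the boundary values $a_1=1/(F_rF_{2r})$ and $a_{n+1}=1/(F_{r(n+1)}F_{r(n+2)})$. Collecting terms and dividing through by the constant $-F_r$ produces~\eqref{eq1_thm5_rec} directly. The stated particular case is the evaluation at $r=2$, $s=1$, where $1/F_r=1$ and the residual constant is $H_s/(F_rF_{2r})=H_1/(F_2F_4)=1/3$.

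The only genuine obstacle is the middle step: recognizing that the parity restriction ``$r$ even'' is precisely what converts $F_{a+b}-(-1)^bF_{a-b}=L_aF_b$ into a \emph{difference} $F_{r(k+2)}-F_{rk}$ collapsing to $L_{r(k+1)}F_r$; for odd $r$ one would instead pick up a sum and the product would not reduce. Once the numerator is expressed as $L_{r(k+1)}F_r$ and $L_{r(k+1)}F_{r(k+1)}$ is identified with $F_{2r(k+1)}$, the argument is indistinguishable in structure from that of Theorem~\ref{thm4_rec}.
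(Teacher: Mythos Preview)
Your proof is correct and follows the same overall plan as the paper: choose $a_k=1/(F_{rk}F_{r(k+1)})$ and $b_k=H_{k+s-1}$, establish the telescoping relation
\[
\frac{1}{F_{r(k+1)} F_{r(k+2)}} - \frac{1}{F_{rk} F_{r(k+1)}} = - \frac{F_r F_{2r(k+1)}}{F_{rk} F_{r(k+1)}^2 F_{r(k+2)}},
\]
and then apply Abel's formula~\eqref{abel1}. The only difference lies in how that telescoping relation is obtained. The paper invokes Howard's three-parameter identity~\eqref{Howard_id2}, $F_a F_{a+2b+c} = F_{a+b+c} F_{a+b} + (-1)^{a+1} F_b F_{b+c}$, specialised to $a=c=r$, $b=rk$, so that the factor $F_{2r(k+1)}$ appears directly. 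You instead reach it in two elementary steps, first $F_{r(k+2)}-F_{rk}=L_{r(k+1)}F_r$ via $L_aF_b=F_{a+b}-(-1)^bF_{a-b}$ with $r$ even, and then $L_{r(k+1)}F_{r(k+1)}=F_{2r(k+1)}$. Your route is marginally more self-contained (it needs no external reference), while the paper's route makes the presence of $F_{2r(k+1)}$ immediate; either way the Abel step and the special case $r=2$, $s=1$ go through verbatim.
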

	\begin{proof}
		Howard \cite{Howard03} derived the following identity valid for indices $a,b$ and $c$:
		\begin{equation}\label{Howard_id2}
			F_a F_{a+2b+c} = F_{a+b+c} F_{a+b} + (-1)^{a+1} F_b F_{b+c}.
		\end{equation}
		This is equivalent to
		\begin{equation*}
			\frac{F_{a} F_{a+2b+c}}{F_{a+b} F_b F_{b+c} F_{a+b+c}} = \frac{1}{F_b F_{b+c}} + (-1)^{a+1} \frac{1}{F_{a+b} F_{a+b+c}},
		\end{equation*}
		or, with $a$ being even,
		\begin{equation*}
			\frac{F_{a} F_{a+2b+c}}{F_{a+b} F_b F_{b+c} F_{a+b+c}} = \frac{1}{F_b F_{b+c}} - \frac{1}{F_{a+b} F_{a+b+c}}.
		\end{equation*}
		Making the replacements $a=c=r$ and $b=rk$ yields
		\begin{equation*}
			\frac{1}{F_{r(k+1)} F_{r(k+2)}} - \frac{1}{F_{rk} F_{r(k+1)}} = - \frac{F_r F_{2r(k+1)}}{F_{rk} F_{r(k+1)}^2 F_{r(k+2)}}.
		\end{equation*}
		Choose $a_k=1/(F_{rk} F_{r(k+1)})$, $r$ even, and $b_k=H_{k+s-1}$, and the result follows from \eqref{abel1}.
	\end{proof}
	
	\begin{remark}
		A similar identity involving products of Fibonacci and Lucas numbers can be derived straightforwardly using the complement
		of \eqref{Howard_id2} from \cite{Howard03}
		\begin{equation}
			F_a L_{a+2b+c} = L_{a+b+c} F_{a+b} + (-1)^{a+1} F_b L_{b+c}.
		\end{equation}
	\end{remark}
	
	\begin{theorem}
		We have
		\begin{gather}
		\sum_{k = 0}^n {H_k F_{n - k}^2 } = \sum_{k = 0}^n {\frac{{F_{n - k} F_{n - k - 1} }}{{k + 1}}},\\
		\sum_{k = 0}^n {H_k G_{k + 1}^2 } = H_{n + 1} G_{n + 1} G_{n + 2} - \sum_{k = 0}^n {\frac{{G_{k + 1} G_{k + 2} }}{{k + 1}}},
		\end{gather}
		and more generally, if $0\ne s\in\mathbb C\setminus\mathbb Z^{-}$, $m$ and $n$ are non-negative integers and $r$ is any integer, then
		\begin{equation}\label{eq.g8er2qv}
		\begin{split}
		& - \sum_{k = 0}^n {\frac{1}{{k + s}}\prod_{j = 0}^{2m - 1} {G_{k + r + j + 1} } } + H_{n + s} \prod_{j = 0}^{2m - 1} {G_{n + r + j + 1} }  - H_{s - 1} \prod_{j = 0}^{2m - 1} {G_{j + r} } \\
		&\qquad\qquad =
		\begin{cases}
		F_m \sum_{k = 0}^n {H_{k + s - 1} \left( {G_{k + m + r - 1}  + G_{k + m + r + 1} } \right)\prod_{j = 1}^{2m - 1} {G_{k + r + j} } },&\text{if $m$ is even;}  \\
		L_m \sum_{k = 0}^n {H_{k + s - 1} G_{k + m + r} \prod_{j = 1}^{2m - 1} {G_{k + r + j} } },&\text{if $m$ is odd.}  \\
		\end{cases}
		\end{split}
		\end{equation}
	\end{theorem}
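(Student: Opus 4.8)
The plan is to recognize \eqref{eq.g8er2qv} as nothing more than Abel's summation formula \eqref{abel1}, run from $k=0$ to $n$ rather than from $1$ to $n$, applied to the sequences
\[
a_k = \prod_{j=0}^{2m-1} G_{k+r+j}, \qquad b_k = H_{k+s-1}.
\]
The one-line identity that proves \eqref{abel1} gives at once its lower-limit-$0$ analogue, in which the boundary term $a_1b_1$ is replaced by $a_0b_0$; I would record this version first. With these choices one has $b_k - b_{k+1} = H_{k+s-1}-H_{k+s} = -\tfrac{1}{k+s}$, together with $a_{n+1}=\prod_{j=0}^{2m-1}G_{n+r+j+1}$ and $a_0 = \prod_{j=0}^{2m-1}G_{j+r}$, so that the right-hand side of Abel's formula reproduces term for term the three expressions on the left of \eqref{eq.g8er2qv}.

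The substance is to evaluate the left-hand side of Abel's formula, $\sum_{k=0}^n H_{k+s-1}\,(a_{k+1}-a_k)$. Here the two products
\[
a_{k+1}=G_{k+r+1}G_{k+r+2}\cdots G_{k+r+2m}, \qquad a_k = G_{k+r}G_{k+r+1}\cdots G_{k+r+2m-1}
\]
share the common block $\prod_{j=1}^{2m-1}G_{k+r+j}$, whence
\[
a_{k+1}-a_k = \Bigl(\prod_{j=1}^{2m-1}G_{k+r+j}\Bigr)\bigl(G_{k+r+2m}-G_{k+r}\bigr).
\]
Everything therefore reduces to the gibonacci difference $G_{k+r+2m}-G_{k+r}$, which, on setting $N=k+r+m$, is $G_{N+m}-G_{N-m}$. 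This single difference identity is the one genuinely nontrivial step; the rest is bookkeeping.

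I would establish it by Binet. Writing $G_n = A\alpha^n + B\beta^n$ and using $\alpha^{-1}=-\beta$, $\beta^{-1}=-\alpha$, so that $\alpha^{-m}=(-1)^m\beta^m$ and $\beta^{-m}=(-1)^m\alpha^m$, the parity of $m$ splits the computation. For $m$ odd one gets $\alpha^m-\alpha^{-m}=\beta^m-\beta^{-m}=L_m$, hence $G_{N+m}-G_{N-m}=L_m\,G_N$; for $m$ even one gets $\alpha^m-\alpha^{-m}=-(\beta^m-\beta^{-m})=\sqrt{5}\,F_m$, and combining with the easily checked $G_{N-1}+G_{N+1}=\sqrt{5}\,(A\alpha^N-B\beta^N)$ (which follows from $1+\alpha^2=\sqrt{5}\,\alpha$ and $1+\beta^2=-\sqrt{5}\,\beta$) yields $G_{N+m}-G_{N-m}=F_m\,(G_{N-1}+G_{N+1})$. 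Putting $N=k+r+m$ produces exactly the factor $L_m\,G_{k+r+m}$ for odd $m$ and $F_m\,(G_{k+r+m-1}+G_{k+r+m+1})$ for even $m$ that appear on the right of \eqref{eq.g8er2qv}, completing the main argument.

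Finally I would read off the two displayed special cases by taking $m=1$, so that $\prod_{j=1}^{2m-1}G_{k+r+j}=G_{k+r+1}$ and the odd-$m$ branch applies with $L_1=1$, giving $\sum_{k=0}^n H_{k+s-1}G_{k+r+1}^2$ on the left. The second identity is the case $G$ arbitrary, $r=0$, $s=1$, where $H_{s-1}=H_0=0$ removes the last term. The first identity is the case $G=F$, $r=-n-1$, $s=1$: then $F_{n+r+1}=F_0=0$ annihilates the middle term, while the reflection $F_{-j}=(-1)^{j-1}F_j$ turns $F_{k+r+1}^2=F_{k-n}^2$ into $F_{n-k}^2$ and turns $F_{k+r+1}F_{k+r+2}=F_{k-n}F_{k-n+1}$ into $-F_{n-k}F_{n-k-1}$, the sign cancelling against the leading minus. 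The only place demanding care is this last sign bookkeeping.
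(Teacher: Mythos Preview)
Your proposal is correct and follows essentially the same route as the paper: Abel summation with $a_k=\prod_{j=0}^{2m-1}G_{k+r+j}$ and $b_k=H_{k+s-1}$, together with the gibonacci difference $G_{N+m}-G_{N-m}$ evaluated according to the parity of $m$. The paper simply quotes the pair $G_{p+q}+(-1)^qG_{p-q}=L_qG_p$ and $G_{p+q}-(-1)^qG_{p-q}=F_q(G_{p-1}+G_{p+1})$, whereas you rederive them via Binet; and the paper does not spell out the $m=1$ specializations, which you handle correctly (including the sign bookkeeping for the $r=-n-1$ case).
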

	\begin{proof}
		Let
		\begin{equation}\label{eq.yoh2fwv}
			a_k  = G_{k + r + 2m - 1} G_{k + r + 2m - 2}  \cdots G_{k + r}  = \prod_{j = 0}^{2m - 1} {G_{k + r + j} },\quad b_k=H_{k+s-1}.
		\end{equation}
		Thus
		\begin{equation}\label{eq.if75zm8}
			b_{k + 1}  - b_k  = H_{k + s}  - H_{k + s - 1} = \frac{1}{{k + s}},
		\end{equation}
		and
		\begin{equation}\label{eq.ohpxkfx}
			\begin{split}
				a_{k + 1} - a_k&=\left( {G_{k + r + 2m} - G_{k + r} } \right)\prod_{j = 1}^{2m - 1} {G_{k + r + j} }\\
				&=\begin{cases}
					F_m \left( {G_{k + m + r - 1} + G_{k + m + r + 1} } \right)\prod_{j = 1}^{2m - 1} {G_{k + r + j} } ,&\text{if $m$ is even;}  \\
					L_m G_{k + m + r} \prod_{j = 1}^{2m - 1} {G_{k + r + j} } ,&\text{if $m$ is odd;}  \\
				\end{cases}
			\end{split}
		\end{equation}
		since
		\begin{equation*}
			G_{p + q} + (- 1)^q G_{p - q} = L_q G_p
		\end{equation*}
		and
		\begin{equation*}
			G_{p + q} - (- 1)^q G_{p - q} = F_q \left( G_{p - 1} + G_{p + 1} \right).
		\end{equation*}
		Identity~\eqref{eq.g8er2qv} now follows upon using~\eqref{eq.yoh2fwv},~\eqref{eq.if75zm8} and~\eqref{eq.ohpxkfx} in~\eqref{abel1}.
	\end{proof}

	\subsection{Identities derived from combinatorial identities}

	\begin{theorem}\label{thm.p5cp11r}
		For $n$ a non-negative integer, we have
		\begin{equation}\label{eq.lt63xqk}
			\begin{split}
				\sum_{k = 1}^n {\left( { - 1} \right)^{k - 1} \frac{{\binom{{2k}}{k}}}{{2^{2k} }}O_k F_{k - 1} }  &= \sum_{k = 1}^n {\left( { - 1} \right)^{k - 1} \frac{{\binom{{2k}}{k}}}{{2^{2k} }}\left( {2k + 1} \right)\left( {O_{k + 1}  - 1} \right)F_{k + 1} } \\
				&\qquad + \left( { - 1} \right)^n \frac{{\binom{{2n}}{n}}}{{2^{2n} }}\left( {2n + 1} \right)\left( {O_{n + 1}  - 1} \right)F_n ,
			\end{split}
		\end{equation}
		and, more generally, for $r-n-s-1\not\in\mathbb Z^{-}$,
		\begin{equation}\label{eq.zmuo9et}
			\begin{split}
				&\sum_{k = 1}^n {\binom{{r}}{{k + s}}\left( {H_r  - H_{r - k - s} } \right)G_{k + t} } \\
				&\qquad= \sum_{k = 1}^n {\binom{{r - 1}}{{k + s}}\left( {H_{r - 1}  - H_{r - k - s - 1} } \right)G_{k + t + 2} }\\
				&\qquad + \binom{{r - 1}}{s}\left( {H_{r - 1}  - H_{r - s - 1} } \right)G_{t + 1}  - \binom{{r - 1}}{{n + s}}\left( {H_{r - 1}  - H_{r - s - n - 1} } \right)G_{n + t + 1} .
			\end{split}
		\end{equation}
	\end{theorem}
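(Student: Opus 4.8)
The plan is to reduce the general identity~\eqref{eq.zmuo9et} to Abel's formula~\eqref{abel2}, using as the underlying ``combinatorial identity'' a differentiated form of Pascal's rule. Regarding $\binom{r}{j}$ as a function of a continuous upper parameter through the Gamma function, relation~\eqref{Harm_psi} gives $\frac{d}{dr}\binom{r}{j} = \binom{r}{j}(H_r - H_{r-j})$. Differentiating Pascal's rule $\binom{r}{k+s} = \binom{r-1}{k+s} + \binom{r-1}{k+s-1}$ with respect to $r$ then produces
\begin{equation*}
\binom{r}{k+s}(H_r - H_{r-k-s}) = \binom{r-1}{k+s}(H_{r-1} - H_{r-k-s-1}) + \binom{r-1}{k+s-1}(H_{r-1} - H_{r-k-s}),
\end{equation*}
which plays the role of the Fibonacci recurrences used in the previous subsection.

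I then set $A_k = \binom{r-1}{k+s}(H_{r-1} - H_{r-k-s-1})$ and observe the crucial index shift $\binom{r-1}{k+s-1}(H_{r-1} - H_{r-k-s}) = A_{k-1}$. Multiplying the displayed identity by $G_{k+t}$ and summing over $k$, the left-hand side of~\eqref{eq.zmuo9et} becomes $\sum_{k=1}^n G_{k+t}(A_k + A_{k-1})$, which is exactly the left-hand side of~\eqref{abel2} with $a_k = A_{k-1}$ and $b_k = G_{k+t}$. Applying~\eqref{abel2} and using the gibonacci recurrence in the form $G_{k+t} + G_{k+t+1} = G_{k+t+2}$ collapses the principal sum to $\sum_{k=1}^n A_k G_{k+t+2}$, while the boundary contributions $a_1 b_1 = A_0 G_{t+1}$ and $-a_{n+1} b_{n+1} = -A_n G_{n+t+1}$ yield the remaining terms of~\eqref{eq.zmuo9et} once $A_0 = \binom{r-1}{s}(H_{r-1} - H_{r-s-1})$ and $A_n = \binom{r-1}{n+s}(H_{r-1} - H_{r-s-n-1})$ are substituted.

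To deduce the special case~\eqref{eq.lt63xqk}, I specialize~\eqref{eq.zmuo9et} to $G = F$, $r = -1/2$, $s = 0$, $t = -1$ and divide by $2$. The half-integer data are supplied by Lemmata~\ref{lem.ho} and~\ref{lem.binomial}: iterating the recurrence for $H_z$ gives $H_{-1/2} - H_{-1/2-k} = -2O_k$ and $H_{-3/2} - H_{-3/2-k} = -2(O_{k+1} - 1)$, while $\binom{-1/2}{k} = (-1)^k \binom{2k}{k} 2^{-2k}$ and $\binom{-3/2}{k} = (-1)^k (2k+1) \binom{2k}{k} 2^{-2k}$. Substituting these, the factors $(2k+1)$ and $(O_{k+1} - 1)$ on the right-hand side emerge automatically, and the first boundary term disappears because $H_{r-1} - H_{r-s-1} = H_{-3/2} - H_{-3/2} = 0$; this is precisely why~\eqref{eq.lt63xqk} carries only a single boundary term.

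I expect the difficulty here to be bookkeeping rather than conceptual. The two points requiring care are the verification of the shift relation $\binom{r-1}{k+s-1}(H_{r-1} - H_{r-k-s}) = A_{k-1}$, and, in the specialization, the correct tracking of signs and of the overall factor of $2$ through the half-integer harmonic and binomial conversions so that all constants match. The conceptual engine---differentiating Pascal's rule to manufacture a telescoping-friendly identity and feeding it into~\eqref{abel2}---is entirely parallel to the Fibonacci-based constructions established above.
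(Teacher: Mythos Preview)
Your argument is correct and uses the same three ingredients as the paper---Pascal's rule, Abel's formula~\eqref{abel2}, and differentiation in~$r$---but in a different order. The paper applies~\eqref{abel2} with $a_k=\binom{r-1}{k+s-1}$ and $b_k=x^k$, then substitutes $x=\alpha,\beta$ to obtain the undifferentiated gibonacci identity~\eqref{eq.p517m2v}, and only then differentiates in~$r$; the special case~\eqref{eq.lt63xqk} is obtained by setting $r=-1/2$ in~\eqref{eq.zmuo9et}. You instead differentiate Pascal's rule first and then feed the result directly into~\eqref{abel2} with $b_k=G_{k+t}$, exploiting the gibonacci recurrence $b_k+b_{k+1}=G_{k+t+2}$ in place of the golden-ratio relation $1+x=x^2$. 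Your route is slightly more economical, as it bypasses the Binet/generating-function detour entirely; the paper's route has the mild advantage that the intermediate identity~\eqref{eq.p517m2v} is recorded separately and reused for the two theorems that follow. Your handling of the specialization to $r=-1/2$, $s=0$, $t=-1$, including the vanishing of the first boundary term and the factor of~$2$, is accurate.
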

	\begin{proof}
		In~\eqref{abel2} use
		\begin{equation*}
			a_k  = \binom{{r - 1}}{{k + s - 1}},\quad b_k=x^k,
		\end{equation*}
		to obtain
		\begin{equation}\label{eq.lem837w}
			\sum_{k = 1}^n {\binom{{r}}{{k + s}}x^k}  = \sum_{k = 1}^n {x^k \left( {1 + x} \right)\binom{{r - 1}}{{k + s}}}  + x\binom{{r - 1}}{s} - x^{n + 1} \binom{{r - 1}}{{n + s}},
		\end{equation}
		since
		\begin{equation*}
			\binom{{r - 1}}{{k + s}} + \binom{{r - 1}}{{k + s - 1}} = \binom{{r}}{{k + s}}.
		\end{equation*}
		Now use $x=\alpha=(1+\sqrt 5)/2$ and $x=\beta=(1-\sqrt 5)/2$, in turn, in~\eqref{eq.lem837w} and add, using the Binet formula to obtain
		\begin{equation}\label{eq.p517m2v}
			\sum_{k = 1}^n {\binom{{r}}{{k + s}}G_{k + t} }  = \sum_{k = 1}^n {\binom{{r - 1}}{{k + s}}G_{k + t + 2} }  + \binom{{r - 1}}{s}G_{t + 1}  - \binom{{r - 1}}{{n + s}}G_{n + t + 1} .
		\end{equation}
		Differentiation of~\eqref{eq.p517m2v} with respect to $r$, using
		\begin{equation*}
			\begin{split}
				\frac{d}{{da}}\binom{{a + b}}{{c + d}} &= \binom{{a + b}}{{c + d}}\left( {\psi \left( {a + b + 1} \right) - \psi \left( {a + b - c - d + 1} \right)} \right)\\
				&\qquad = \binom{{a + b}}{{c + d}}\left( {H_{a + b}  - H _{a + b - c - d} } \right),
			\end{split}
		\end{equation*}
		gives~\eqref{eq.zmuo9et}, of which evaluation at $r=-1/2$ gives~\eqref{eq.lt63xqk} on account of Lemma~\ref{lem.binomial}.
	\end{proof}
	
	\begin{theorem}
	For $r-n-s-1\notin \mathbb{Z}^{-}$
	\begin{equation}
	\begin{split}
	&\sum_{k = 1}^n\binom{r}{k+s}\left(H_{r-k-s}-H_{k+s}\right)G_{k+t}=\sum_{k=1}^n\binom{r-1}{k+s}\left(H_{r-1-k-s}-H_{k+s}\right)G_{k+t+2}\\
	&                 +\binom{r-1}{s}\left(H_{r-1-s}-H_s\right)G_{t+1}-\binom{r-1}{n+s}\left(H_{r-1-n-s}-H_{n+s}\right)G_{n+t+1}.
	\end{split}
	\end{equation}
	\end{theorem}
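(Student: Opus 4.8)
The plan is to exploit the very same gibonacci identity \eqref{eq.p517m2v} that underlies Theorem~\ref{thm.p5cp11r}, but to differentiate it with respect to $s$ rather than with respect to $r$. Recall that the proof of Theorem~\ref{thm.p5cp11r} established, prior to any differentiation,
\begin{equation*}
\sum_{k = 1}^n \binom{r}{k+s}G_{k+t} = \sum_{k = 1}^n \binom{r-1}{k+s}G_{k+t+2} + \binom{r-1}{s}G_{t+1} - \binom{r-1}{n+s}G_{n+t+1}.
\end{equation*}
Since the sums are finite and this is an identity in the continuous parameter $s$, I may differentiate both sides term by term with respect to $s$, treating the $G$-factors (which do not involve $s$) as constants.

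The key computation is the $s$-derivative of a generalized binomial coefficient carrying $s$ in its lower argument. Writing $\binom{a}{k+s} = \Gamma(a+1)/[\Gamma(k+s+1)\Gamma(a-k-s+1)]$ and using $\frac{d}{ds}\ln\Gamma(k+s+1) = \psi(k+s+1)$ together with $\frac{d}{ds}\ln\Gamma(a-k-s+1) = -\psi(a-k-s+1)$, the chain rule gives
\begin{equation*}
\frac{d}{ds}\binom{a}{k+s} = \binom{a}{k+s}\bigl(\psi(a-k-s+1) - \psi(k+s+1)\bigr) = \binom{a}{k+s}\bigl(H_{a-k-s} - H_{k+s}\bigr),
\end{equation*}
where the final equality invokes \eqref{Harm_psi}. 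Applying this with $a=r$ to the left-hand side produces the factor $H_{r-k-s}-H_{k+s}$, while applying it with $a=r-1$ to each binomial coefficient on the right-hand side produces the shifted factors $H_{r-1-k-s}-H_{k+s}$, $H_{r-1-s}-H_s$, and $H_{r-1-n-s}-H_{n+s}$ in the three respective terms. Collecting everything reproduces the claimed identity exactly.

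I do not expect a genuine obstacle, as the argument is structurally identical to that of Theorem~\ref{thm.p5cp11r} with the roles of $r$ and $s$ interchanged as the variable of differentiation. The only care required is bookkeeping: one must track that the upper-argument shift $r\mapsto r-1$ between the left- and right-hand binomials propagates correctly into the digamma (hence harmonic) arguments, so that $H_{r-k-s}$ appears on the left but $H_{r-1-k-s}$ on the right. It is also worth recording that the hypothesis $r-n-s-1\notin\mathbb Z^{-}$ keeps every relevant argument $r-k-s$ and $r-1-k-s$ (for $1\le k\le n$) away from the poles of $\Gamma$, so that all harmonic numbers occurring in the differentiated identity are well-defined.
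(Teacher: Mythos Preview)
Your proof is correct and follows exactly the route the paper takes: differentiate identity~\eqref{eq.p517m2v} with respect to $s$ (the paper's one-line proof says just this). Your explicit computation of $\frac{d}{ds}\binom{a}{k+s}=\binom{a}{k+s}(H_{a-k-s}-H_{k+s})$ via logarithmic differentiation and~\eqref{Harm_psi} is the appropriate justification, and your remark on the hypothesis $r-n-s-1\notin\mathbb Z^{-}$ keeping the Gamma arguments off the poles is a welcome addition that the paper omits.
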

	\begin{proof}
		This theorem follows directly from differentiating \eqref{eq.p517m2v} with respect to $s$.
	\end{proof}
	\begin{theorem}
		For  $r-n-s-1\notin \mathbb{Z}^{-}$
		\begin{equation}
			\label{49}
			\begin{split}
				&\sum_{k = 1}^n\binom{r}{k+s}\left(H_{r}-H_{r-k-s}\right)G_{3k+t}
				=2\sum_{k=1}^n\binom{r-1}{k+s}\left(H_{r-1}-H_{r-k-s-1}\right)G_{3k+t+2}\\
				&+\binom{r-1}{s}\left(H_{r-1}-H_{r-s-1}\right)G_{t+3}-\binom{r-1}{n+s}\left(H_{r-1}-H_{r-s-n-1}\right)G_{3n+t+3}.
			\end{split}
		\end{equation}
		
	\end{theorem}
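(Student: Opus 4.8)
The plan is to reproduce, almost verbatim, the argument used to prove Theorem~\ref{thm.p5cp11r} (the source of \eqref{eq.zmuo9et}), the only genuinely new ingredient being the substitution that generates the arithmetic progression $3k+t$ in the index. First I would return to the combinatorial recurrence \eqref{eq.lem837w}, which is an identity in the free variable $x$. To obtain \eqref{eq.p517m2v} the authors set $x=\alpha$ and $x=\beta$; here instead I would evaluate \eqref{eq.lem837w} at $x=\alpha^3$ and at $x=\beta^3$, since $(\alpha^3)^k=\alpha^{3k}$ and $(\beta^3)^k=\beta^{3k}$ are precisely what is needed to produce $G_{3k+t}$ after recombination.

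The key algebraic observation, and the only point where the computation departs from the $G_{k+t}$ case, is that $\alpha^2=\alpha+1$ forces $\alpha^3=2\alpha+1$ and $\beta^3=2\beta+1$, so that
\begin{equation*}
1+\alpha^3 = 2(\alpha+1) = 2\alpha^2, \qquad 1+\beta^3 = 2(\beta+1) = 2\beta^2.
\end{equation*}
This is exactly what turns the factor $(1+x)$ in \eqref{eq.lem837w} into a multiplicative constant $2$ together with a two-step shift of the Fibonacci/gibonacci index, explaining both the coefficient $2$ and the passage from $t+1$ to $t+3$ in the target identity. Concretely, writing $G_j=A\alpha^j+B\beta^j$ for the Binet form of the gibonacci sequence, I would take \eqref{eq.lem837w} with $x=\alpha^3$ and multiply by $A\alpha^t$, take it with $x=\beta^3$ and multiply by $B\beta^t$, use $(1+\alpha^3)\alpha^{3k+t}=2\alpha^{3k+t+2}$ and its $\beta$-companion, and then add. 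All $\alpha$- and $\beta$-combinations collapse into gibonacci numbers, giving the intermediate identity
\begin{equation*}
\sum_{k=1}^n \binom{r}{k+s} G_{3k+t} = 2\sum_{k=1}^n \binom{r-1}{k+s} G_{3k+t+2} + \binom{r-1}{s} G_{t+3} - \binom{r-1}{n+s} G_{3n+t+3},
\end{equation*}
which is the exact $G_{3k+t}$ analogue of \eqref{eq.p517m2v}.

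Finally I would differentiate this intermediate identity with respect to $r$, exactly as in the proof of Theorem~\ref{thm.p5cp11r}, using the rule $\frac{d}{dr}\binom{r}{k+s}=\binom{r}{k+s}\left(H_r-H_{r-k-s}\right)$ (and its shifted version $\frac{d}{dr}\binom{r-1}{k+s}=\binom{r-1}{k+s}\left(H_{r-1}-H_{r-1-k-s}\right)$). Since the gibonacci factors are independent of $r$, the derivative attaches the weights $H_r-H_{r-k-s}$, $H_{r-1}-H_{r-k-s-1}$, $H_{r-1}-H_{r-s-1}$ and $H_{r-1}-H_{r-s-n-1}$ to the four terms respectively, reproducing \eqref{49}. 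I do not expect a real obstacle here: the scheme is a transcription of the earlier proof, and the only steps requiring genuine care are the emergence of the constant $2$ and the shift to $t+3$, both of which trace back to the single factorization $1+\alpha^3=2\alpha^2$.
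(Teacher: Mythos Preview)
Your proposal is correct and follows exactly the approach indicated in the paper: substitute $x=\alpha^3$ and $x=\beta^3$ into \eqref{eq.lem837w}, combine via the Binet formula, and then differentiate with respect to $r$. The paper's proof is extremely terse and omits the intermediate identity and the differentiation step, but your write-up simply makes explicit what is implicit there, including the crucial factorization $1+\alpha^3=2\alpha^2$ that accounts for the factor $2$ and the index shift to $t+3$.
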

	\begin{theorem}
		For  $r-n-s-1\notin \mathbb{Z}^{-}$
		\begin{equation}
			\label{121}
			\begin{split}
				&\sum_{k = 1}^n(-1)^k\binom{r}{k+s}\left(H_{r}-H_{r-k-s}\right)G_{k+t}
				=\sum_{k=1}^n(-1)^{k+1}\binom{r-1}{k+s}\left(H_{r-1}-H_{r-k-s-1}\right)G_{k+t-1}\\
				&-\binom{r-1}{s}\left(H_{r-1}-H_{r-s-1}\right)G_{t+1}+(-1)^n\binom{r-1}{n+s}\left(H_{r-1}-H_{r-s-n-1}\right)G_{n+t+1}.
			\end{split}
		\end{equation}
	\end{theorem}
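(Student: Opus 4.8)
The plan is to imitate the proof of Theorem~\ref{thm.p5cp11r} verbatim, but with the sign-twisted weight $b_k=(-x)^k$ in place of $b_k=x^k$, and to postpone the differentiation in $r$ until the very end. Accordingly, I would first apply Abel's formula~\eqref{abel2} with
\[
a_k=\binom{r-1}{k+s-1},\qquad b_k=(-x)^k.
\]
Pascal's rule $\binom{r-1}{k+s}+\binom{r-1}{k+s-1}=\binom{r}{k+s}$ collapses $a_{k+1}+a_k$ to $\binom{r}{k+s}$, while $b_k+b_{k+1}=(-x)^k(1-x)$. This produces the polynomial identity
\[
\sum_{k=1}^n(-x)^k\binom{r}{k+s}
=\sum_{k=1}^n(-x)^k(1-x)\binom{r-1}{k+s}
+\binom{r-1}{s}(-x)-\binom{r-1}{n+s}(-x)^{n+1}.
\]

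Next I would evaluate this at $x=\alpha$ and at $x=\beta$, multiply the two instances respectively by $A\alpha^t$ and $B\beta^t$ (where $G_j=A\alpha^j+B\beta^j$ is the Binet form of the gibonacci sequence), and add. On the left the weight $(-x)^k$ becomes $(-1)^k$ and assembles into $G_{k+t}$, exactly as in the unsigned case. The decisive computation is on the right: here the identities $1-\alpha=\beta$ and $1-\beta=\alpha$ interchange the two roots, after which $\alpha\beta=-1$ lowers each exponent by one, so that
\[
\beta A\,\alpha^{k+t}+\alpha B\,\beta^{k+t}
=-\bigl(A\alpha^{k+t-1}+B\beta^{k+t-1}\bigr)=-G_{k+t-1}.
\]
This is precisely what converts $(-x)^k(1-x)$ into $(-1)^{k+1}G_{k+t-1}$, and the analogous boundary bookkeeping converts $(-x)$ into $-G_{t+1}$ and $-(-x)^{n+1}$ into $(-1)^nG_{n+t+1}$. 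The resulting base identity is
\[
\sum_{k=1}^n(-1)^k\binom{r}{k+s}G_{k+t}
=\sum_{k=1}^n(-1)^{k+1}\binom{r-1}{k+s}G_{k+t-1}
-\binom{r-1}{s}G_{t+1}+(-1)^n\binom{r-1}{n+s}G_{n+t+1}.
\]

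Finally I would differentiate this base identity with respect to $r$, using $\tfrac{d}{dr}\binom{r}{k+s}=\binom{r}{k+s}(H_r-H_{r-k-s})$ together with the companion formula for the $\binom{r-1}{\,\cdot\,}$ coefficients recorded in the proof of Theorem~\ref{thm.p5cp11r}; since the gibonacci factors carry no $r$-dependence, they pass through untouched. This yields~\eqref{121} directly. The main obstacle is not analytic but combinatorial bookkeeping: one must track the root interchange $1-\alpha=\beta$ jointly with $\alpha\beta=-1$ with care, because it is precisely this pair of facts, rather than the relation $1+\alpha=\alpha^2$ operative in the non-alternating Theorem~\ref{thm.p5cp11r}, that forces the \emph{downward} index shift $G_{k+t}\mapsto G_{k+t-1}$ and the sign reversal $(-1)^k\mapsto(-1)^{k+1}$ on the right-hand side.
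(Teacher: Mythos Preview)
Your proof is correct and is essentially the paper's own argument. The paper's one-line proof substitutes $x=-\alpha$ and $x=-\beta$ directly into the already-established polynomial identity~\eqref{eq.lem837w} and then (implicitly, by parallel with Theorem~\ref{thm.p5cp11r}) differentiates in $r$; your route of taking $b_k=(-x)^k$ in~\eqref{abel2} and then evaluating at $x=\alpha,\beta$ is the same substitution in a different order, since your sign-twisted polynomial identity is precisely~\eqref{eq.lem837w} with $x\mapsto -x$.
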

	\begin{proof}
		We set  $x=\alpha^3$, $x=\beta^3$, in turn, in~\eqref{eq.lem837w} to obtain the result in \eqref{49} and similarly set $x=-\alpha$, 
		$x=-\beta$ to get the result in \eqref{121}.
	\end{proof}
	
	\section{Method II: Identities based on polynomial combinatorial identities}
	
	The Fibonacci-harmonic identities derived in this section are based on making appropriate substitutions in known polynomial identities.
	
	\subsection{Fibonacci-harmonic sums derived from an identity from Gould's book}
	
	\begin{lemma}[Gould~{\cite[Entry (1.9), page 2]{Gould}}]
		If $x$ and $y$ are complex variables, then
		\begin{equation}\label{46}
		\sum_{k = 0}^n \binom{{x}}{k} y^k = \sum_{k = 0}^n \left( {- 1} \right)^k \binom{{n - x}}{k}\left( {1 + y} \right)^{n - k} y^k.
		\end{equation}
	\end{lemma}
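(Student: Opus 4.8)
The plan is to treat both sides of~\eqref{46} as polynomials in the variable $x$ and invoke the fact that two polynomials of degree at most $n$ that agree at $n+1$ distinct points must coincide. First I would record the degree bound: since $\binom{x}{k}$ and $\binom{n-x}{k}$ are each polynomials in $x$ of degree $k$, both the left-hand side and the right-hand side of~\eqref{46} are polynomials in $x$ of degree at most $n$ (for any fixed value of $y$, or equivalently as elements of $\mathbb C[y][x]$). Hence it suffices to verify the identity at the $n+1$ integer values $x=0,1,\dots,n$.

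Second, I would evaluate both sides at $x=m$ for an integer $m$ with $0\le m\le n$, where the generalized binomial coefficients truncate to ordinary ones. On the left, $\binom{m}{k}=0$ for $k>m$, so the sum collapses via the finite binomial theorem into $\sum_{k=0}^m\binom{m}{k}y^k=(1+y)^m$. On the right, since $n-m$ is a non-negative integer, $\binom{n-m}{k}=0$ for $k>n-m$, so the sum runs only up to $n-m$; factoring out $(1+y)^m$ leaves $\sum_{k=0}^{n-m}\binom{n-m}{k}(1+y)^{n-m-k}(-y)^k=\bigl((1+y)-y\bigr)^{n-m}=1$. Thus the right-hand side also equals $(1+y)^m$. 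Since the two degree-$\le n$ polynomials in $x$ agree at $x=0,1,\dots,n$, they are identically equal, which proves~\eqref{46}.

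The step requiring the most care is the bookkeeping of the truncations: one must observe that $\binom{m}{k}$ vanishing for $k>m$ on the left and $\binom{n-m}{k}$ vanishing for $k>n-m$ on the right are exactly what force each side to collapse to a clean power of $1+y$, and it is precisely the range $0\le m\le n$ that makes $n-m$ a non-negative integer and thereby supplies exactly $n+1$ admissible interpolation nodes---no more are available, and no more are needed given the degree bound. As an alternative I could instead argue by induction on $n$, using Pascal's rule $\binom{n+1-x}{k}=\binom{n-x}{k}+\binom{n-x}{k-1}$ to split the right-hand side and the upper-negation identity $(-1)^{n+1}\binom{n-x}{n+1}=\binom{x}{n+1}$ to reconcile the new top term; but the interpolation argument is shorter and more transparent.
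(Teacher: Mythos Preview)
Your proof is correct. The paper does not supply its own argument for this lemma; it simply records the identity as Gould's entry~(1.9) and uses it as a known input. Your polynomial-interpolation proof (degree bound in $x$ plus verification at $x=0,1,\dots,n$ via two applications of the binomial theorem) is a clean, self-contained justification that the paper omits.
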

	
	\begin{theorem}
		If $n-x-k\notin \mathbb{Z}^{-}$,
		\begin{equation}\label{47}
		\sum_{k=0}^n\binom{x}{k}(H_x-H_{x-k})G_{k+t} = \sum_{k=0}^n (-1)^k \binom{n-x}{k} (H_{n-x}-H_{n-x-k}) G_{2n-k+t}.
		\end{equation}
	\end{theorem}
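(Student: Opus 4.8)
The plan is to start from Gould's polynomial identity \eqref{46} and differentiate it with respect to the free variable $x$, then specialize $y$ to the golden ratio $\alpha$ and its conjugate $\beta$ so as to invoke the Binet formula \eqref{Binet}. This mirrors the technique used in the proof of Theorem~\ref{thm.p5cp11r}, where a combinatorial identity in $x$ (there $x^k$, here $y^k$) is evaluated at $\alpha$ and $\beta$ and summed to produce gibonacci numbers. The harmonic numbers $H_x-H_{x-k}$ on the left and $H_{n-x}-H_{n-x-k}$ on the right will be the natural byproduct of differentiating the two binomial coefficients $\binom{x}{k}$ and $\binom{n-x}{k}$ with respect to $x$, using the formula
\begin{equation*}
\frac{d}{dx}\binom{x}{k} = \binom{x}{k}\left(H_x - H_{x-k}\right),
\end{equation*}
which is exactly the specialization of the derivative rule already recorded in the proof of Theorem~\ref{thm.p5cp11r}.

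**Carrying out the differentiation.**

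First I would differentiate both sides of \eqref{46} with respect to $x$, treating $y$ as a constant. On the left side only $\binom{x}{k}$ depends on $x$, so the left becomes $\sum_{k=0}^n \binom{x}{k}(H_x-H_{x-k})y^k$. On the right side the factor depending on $x$ is $\binom{n-x}{k}$, whose derivative with respect to $x$ is $-\binom{n-x}{k}(H_{n-x}-H_{n-x-k})$; the chain rule contributes the sign because of the $-x$ in the upper argument. Thus the right side becomes $\sum_{k=0}^n (-1)^{k+1}\binom{n-x}{k}(H_{n-x}-H_{n-x-k})(1+y)^{n-k}y^k$. The extra factor $(-1)$ from the chain rule combines with the existing $(-1)^k$ to give $(-1)^{k+1}$, and I will carry this sign carefully since a sign slip here is the most likely source of error.

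**Specializing $y$ via the Binet formula.**

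Next I would set $y=\alpha$ and $y=\beta$ in turn. The essential observation is that $1+\alpha=\alpha^2$ and $1+\beta=\beta^2$, so that $(1+y)^{n-k}y^k$ becomes $\alpha^{2(n-k)}\alpha^k=\alpha^{2n-k}$ when $y=\alpha$, and likewise $\beta^{2n-k}$ when $y=\beta$. Forming the appropriate linear combination of the two specializations and applying the Binet formula \eqref{Binet} then converts $y^k=\alpha^k,\beta^k$ into $G_{k+t}$ on the left and $\alpha^{2n-k},\beta^{2n-k}$ into $G_{2n-k+t}$ on the right, after shifting the gibonacci index by $t$ to match arbitrary initial values. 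The sign $(-1)^{k+1}$ on the right must be reconciled with the $(-1)^k$ appearing in the target identity \eqref{47}; I expect the differentiation sign and a rearrangement of the difference $H_{n-x}-H_{n-x-k}$ (possibly moving a minus sign into the harmonic factor) to cancel so that the final form matches.

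**Main obstacle.**

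The hard part will be the bookkeeping of signs and the correct passage from the two exponential specializations to the gibonacci sequence with an arbitrary shift $t$. Concretely, one must verify that the linear combination of the $\alpha$- and $\beta$-evaluations that yields $G_{k+t}$ on the left simultaneously yields $G_{2n-k+t}$ on the right, using $\alpha^2=1+\alpha$ to collapse the $(1+y)^{n-k}y^k$ factor. The condition $n-x-k\notin\mathbb{Z}^{-}$ guarantees that $H_{n-x-k}$ is well defined, so no analytic subtlety arises beyond ensuring the harmonic numbers stay in their domain of definition. Once the sign in the chain-rule differentiation is tracked correctly, identity \eqref{47} follows directly.
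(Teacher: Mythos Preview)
Your approach is essentially the paper's: the paper first sets $y=\alpha,\beta$ in \eqref{46} and combines via Binet to obtain the gibonacci identity \eqref{48}, and then differentiates with respect to $x$, whereas you differentiate first and specialize afterwards; since the two operations commute this is the same proof. Your sign worry is legitimate and is not something that ``rearranges away'': the chain rule gives $\tfrac{d}{dx}\binom{n-x}{k}=-\binom{n-x}{k}(H_{n-x}-H_{n-x-k})$, so both routes actually produce $(-1)^{k+1}$ on the right, and the $(-1)^k$ in the displayed statement appears to be a typo rather than a step you are missing.
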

	\begin{proof}
		Set $y=\alpha$ and $y=\beta$, in turns, in \eqref{46} and use the Binet formula to get
		\begin{equation}\label{48}
			\sum_{k=0}^n\binom{x}{k}G_{k+t} = \sum_{k=0}^n (-1)^k \binom{n-x}{k} G_{2n-k+t}.
		\end{equation}
		Differentiating \eqref{48} with respect to $x$, using
		$$\frac{d}{dx}\binom{x}{k}=\binom{x}{k}(H_x-H_{x-k})$$
		Thus, \eqref{47} follows directly.
	\end{proof}
	
	\begin{corollary}
	We have,
	\begin{equation}
	\sum_{k=0}^n \frac{(-1)^k}{2^{2k}} O_{k+1}G_{k+t}
	= (2n+1) \sum_{k=0}^n \frac{(-1)^{k+1} \binom{2k}{k}} {2^{2k+1}\binom{n}{k}}(O_{n+1}-O_{n-k})G_{2n-k+t}.
		\end{equation}
	\end{corollary}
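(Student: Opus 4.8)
The plan is to obtain the corollary by specializing the free parameter $x$ in the differentiated Gould identity \eqref{47} to a half-integer and then translating the resulting half-integer harmonic numbers and binomial coefficients into odd harmonic numbers and central binomial coefficients via Lemmas \ref{lem.ho} and \ref{lem.binomial}. The tell-tale features of the right-hand side — the factor $2n+1$, the ratio $\binom{2k}{k}/\binom{n}{k}$, and the odd-harmonic difference $O_{n+1}-O_{n-k}$ — all point to the choice $x=-1/2$, so that $n-x=n+1/2$ and the binomial to be simplified on the right is $\binom{n+1/2}{k}$, whose evaluation through Lemma \ref{lem.binomial} is exactly what manufactures the pattern $(2n+1)\binom{2k}{k}/(2^{2k}\binom{n}{k})$.

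First I would set $x=-1/2$ in \eqref{47}. On the left, the last identity of Lemma \ref{lem.binomial} gives $\binom{-1/2}{k}=(-1)^k\binom{2k}{k}2^{-2k}$, while the telescoping $H_{-1/2}-H_{-1/2-k}=\sum_{j=0}^{k-1}\frac{1}{-1/2-j}$ collapses — precisely as in the computation behind Lemma \ref{lem.ho} — to a multiple of an odd harmonic number, since each summand is $-2/(2j+1)$. Inserting both expressions turns the left-hand sum of \eqref{47} into a sum of the shape $\sum_k (-1)^k 2^{-2k} O_\bullet\, G_{k+t}$ carrying the gibonacci indices $k+t$ that appear on the left of the corollary.

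On the right I would use $n-x=n+1/2$ and rewrite $\binom{n+1/2}{k}$ by Lemma \ref{lem.binomial}; this is the step that produces the $2n+1$, the $\binom{2k}{k}$, the $\binom{n}{k}$ in the denominator, and the power $2^{2k+1}$. Simultaneously I would convert $H_{n+1/2}-H_{n+1/2-k}$ into an odd-harmonic difference of the form $O_{n+1}-O_{n-k}$ through the half-integer relations \eqref{eq.ivi1ex5}--\eqref{eq.u6ng5d6} of Lemma \ref{lem.ho}. Collecting the remaining powers of $2$ and the overall sign should then assemble the right-hand side, with the gibonacci indices $2n-k+t$ inherited directly from \eqref{47}.

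The main obstacle is sign-and-shift bookkeeping rather than any deep idea. Differentiating the Binet-reduced form \eqref{48} in $x$ produces, by the chain rule, a sign on the $\binom{n-x}{k}$ term that must be tracked exactly through the substitution $x=-1/2$; and the several odd harmonic numbers that surface ($O_k$, $O_{k+1}$, $O_{n-k}$, $O_{n-k+1}$) differ only by a single index shift, so one must be meticulous about which endpoint of each telescoping sum is held fixed. I would pin the signs and the shifts down with a low-order sanity check (for instance $n=1$, with $G$ left arbitrary, comparing coefficients of $G_{t}$ and $G_{1+t}$), and verify separately that the boundary term at $k=0$ — which vanishes on the right because $H_{n-x}-H_{n-x}=0$ — is correctly reconciled with the $k=0$ contribution on the left.
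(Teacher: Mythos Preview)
Your proposal is correct and follows the same approach as the paper: the paper's proof simply instructs one to set $x=-1/2$ in \eqref{47}, invoke the binomial evaluations of Lemma~\ref{lem.binomial} for $\binom{-1/2}{k}$ and $\binom{n+1/2}{k}$, and use the half-integer harmonic relation $H_{n+1/2}-H_{n-k+1/2}=2(O_{n+1}-O_{n-k})$ from Lemma~\ref{lem.ho}. Your sketch is more explicit about the mechanics (the telescoping on the left, the origin of the $2n+1$ factor, the sign tracking), but the substance is identical.
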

	\begin{proof}
		Set $x=-1/2$ in \eqref{47}, using (14) and (17). Thus, with the fact that 
		$$H_{n+1/2}-H_{n-k+1/2}=2(O_{n+1}-O_{n-k}),$$
		the result follows immediately.
	\end{proof}
	
	\begin{theorem}
	If $n-x-k\notin\mathbb{Z}^{-}$,
	\begin{equation}
	\sum_{k = 0}^n \binom{x}{k} (H_x-H_{x-k}) G_{k+t}(G_{2k+t}-(-1)^k) = \sum_{k = 0}^n (-1)^k 2^{n-k}\binom{n-x}{k}(H_{n-x}-H_{n-x-k})G_{2n+k+t}.
	\end{equation}
	\end{theorem}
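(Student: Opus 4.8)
The plan is to mirror, almost verbatim, the derivation that takes~\eqref{48} to~\eqref{47}: substitute two suitable powers of $\alpha$ and $\beta$ into Gould's identity~\eqref{46}, collapse the two specializations with the Binet formula~\eqref{Binet}, and then differentiate the resulting polynomial identity in $x$. The only real decision is the value of $y$, and the shape of the right-hand side dictates it. The factor $2^{n-k}$ together with the shifted index in $G_{2n+k+t}$ can only come from the factor $(1+y)^{n-k}y^k$ appearing on the right of~\eqref{46}, so I need $1+y$ to be twice a square of a root. Since $\alpha^2=\alpha+1$ forces $\alpha^3=2\alpha+1$, I have the clean relations
\begin{equation*}
1+\alpha^3 = 2\alpha^2, \qquad 1+\beta^3 = 2\beta^2,
\end{equation*}
whence $(1+\alpha^3)^{n-k}(\alpha^3)^k = 2^{n-k}\alpha^{2n+k}$ and likewise for $\beta$. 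This pins down the substitution $y=\alpha^3$, $y=\beta^3$.

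First I would put $y=\alpha^3$ and $y=\beta^3$ in turn into~\eqref{46}; writing the gibonacci sequence in Binet form $G_j=A\alpha^j+B\beta^j$, I multiply the first specialization by $A\alpha^t$ and the second by $B\beta^t$ and add. On the left this assembles $\sum_k\binom{x}{k}G_{3k+t}$, and on the right the powers combine into $\sum_k(-1)^k2^{n-k}\binom{n-x}{k}G_{2n+k+t}$, giving the polynomial identity
\begin{equation*}
\sum_{k=0}^n \binom{x}{k} G_{3k+t} = \sum_{k=0}^n (-1)^k 2^{n-k}\binom{n-x}{k} G_{2n+k+t},
\end{equation*}
the exact analogue of~\eqref{48} for this choice of $y$. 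I would then differentiate with respect to $x$, precisely as in the passage from~\eqref{48} to~\eqref{47}, using $\frac{d}{dx}\binom{x}{k}=\binom{x}{k}(H_x-H_{x-k})$ on the left and the corresponding rule for $\binom{n-x}{k}$ on the right; this turns both binomial coefficients into their harmonic-weighted versions and produces exactly the displayed right-hand side $\sum_k(-1)^k2^{n-k}\binom{n-x}{k}(H_{n-x}-H_{n-x-k})G_{2n+k+t}$.

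The main obstacle is the left-hand summand. The differentiation naturally delivers $G_{3k+t}$, and the task is to recognize it in the displayed product form $G_{k+t}(G_{2k+t}-(-1)^k)$. For the pure Fibonacci case this is the triple-argument collapse $F_{3k}=F_k\bigl(L_{2k}+(-1)^k\bigr)$, obtained from $F_{a+b}=F_aL_b-(-1)^bF_{a-b}$ with $a=k$, $b=2k$; for a general gibonacci sequence the clean statement is instead $G_{3k+t}=L_kG_{2k+t}-(-1)^kG_{k+t}$, a direct consequence of the companion identity $L_qG_p=G_{p+q}+(-1)^qG_{p-q}$ already used earlier in the paper. Reconciling this companion form with the precise summand written in the statement, and tracking the sign carried by the $(-1)^k$ factor once $\binom{n-x}{k}$ is differentiated, is the one delicate bookkeeping point; everything else is the routine Binet-plus-differentiation machinery that underlies~\eqref{47}.
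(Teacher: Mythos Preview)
Your approach is exactly the paper's: substitute $y=\alpha^{3}$ and $y=\beta^{3}$ into Gould's identity~\eqref{46}, combine via Binet to get
\[
\sum_{k=0}^{n}\binom{x}{k}G_{3k+t}=\sum_{k=0}^{n}(-1)^{k}2^{\,n-k}\binom{n-x}{k}G_{2n+k+t},
\]
and then differentiate in $x$. The paper's proof consists of precisely that sentence.

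You are also right to flag the left-hand summand as the ``one delicate bookkeeping point,'' because it does not in fact reconcile: the procedure yields $G_{3k+t}$ on the left, and $G_{3k+t}\ne G_{k+t}\bigl(G_{2k+t}-(-1)^{k}\bigr)$ in general (already for $G=F$, $t=0$, $k=2$ one has $F_{6}=8$ while $F_{2}(F_{4}-1)=2$). The correct companion identity is $G_{3k+t}=L_{k}G_{2k+t}-(-1)^{k}G_{k+t}$, as you note, which is linear in $G$ and cannot be rewritten as the stated product. So the displayed left side is a misprint in the paper; the identity that the method actually proves has $G_{3k+t}$ there. Your sign caution about differentiating $\binom{n-x}{k}$ is also warranted: $\frac{d}{dx}\binom{n-x}{k}=-\binom{n-x}{k}(H_{n-x}-H_{n-x-k})$, so strictly speaking an overall minus sign appears relative to the right side as printed (the same slip occurs in the passage from~\eqref{48} to~\eqref{47}).
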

	\begin{proof}
	Set $y=\alpha^3$ and $y=\beta^3$, in turn, in \eqref{46} and differentiating the resulting identity with respect to $x$. Hence, the result.
	\end{proof}
	
	\subsection{Fibonacci-harmonic sums derived from another identity from Gould's book}
	
	\begin{lemma}
	If $b\in\mathbb C\setminus \mathbb{Z}^{-}$ and $x$ is a complex variable, then
	\begin{equation}\label{eq.gldge7l}
	\sum_{k = 0}^n ( - 1)^{n - k} \binom{{n - b}}{{k - b}} H_{k - b} \left( {1 + x} \right)^k 
	= \sum_{k = 0}^n \binom{{b}}{{n - k}}\left( H_b + H_{n - b} - H_{b - n + k} \right)x^k.
	\end{equation}
	\end{lemma}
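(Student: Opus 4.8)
The plan is to produce \eqref{eq.gldge7l} by differentiating, with respect to the parameter $b$, its harmonic-free ``parent'' identity, exactly in the spirit of the earlier subsections where Fibonacci-harmonic sums were extracted from polynomial identities. The parent is the binomial identity
\[
\sum_{k=0}^n(-1)^{n-k}\binom{n-b}{k-b}(1+x)^k=\sum_{k=0}^n\binom{b}{n-k}x^k .
\]
I would first establish this: for $0\le k\le n$ the quantities $\binom{n-b}{k-b}=(n-b)(n-b-1)\cdots(k-b+1)$ and $\binom{b}{n-k}=b(b-1)\cdots(b-n+k+1)/(n-k)!$ are polynomials in $b$ of degree $n-k$, so both sides are polynomials in $b$ of degree at most $n$; a direct check (reindexing $k\mapsto n-k$ and applying the binomial theorem) shows that both agree with $(1+x)^{b}x^{\,n-b}$ at the $n+1$ integer points $b=0,1,\dots,n$, and agreement at $n+1$ points forces the polynomial identity. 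This is the ``another identity from Gould's book'' promised by the subsection heading.

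Next I would differentiate both sides in $b$. Writing the binomials through the Gamma function and differentiating logarithmically, \eqref{Harm_psi} yields
\[
\frac{d}{db}\binom{n-b}{k-b}=\binom{n-b}{k-b}\bigl(H_{k-b}-H_{n-b}\bigr),\qquad
\frac{d}{db}\binom{b}{n-k}=\binom{b}{n-k}\bigl(H_b-H_{b-n+k}\bigr).
\]
The one point needing care is that on the left binomial $b$ sits in \emph{both} entries, so the two digamma contributions add rather than cancel; this is what generates the extra $H_{n-b}$ below. Differentiating the parent identity then gives
\[
\sum_{k=0}^n(-1)^{n-k}\binom{n-b}{k-b}\bigl(H_{k-b}-H_{n-b}\bigr)(1+x)^k=\sum_{k=0}^n\binom{b}{n-k}\bigl(H_b-H_{b-n+k}\bigr)x^k .
\]

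The final and only genuinely substantive step is to dispose of the spurious $H_{n-b}$ term on the left. Since $H_{n-b}$ does not depend on $k$, I would factor it out and recognize $H_{n-b}\sum_{k=0}^n(-1)^{n-k}\binom{n-b}{k-b}(1+x)^k$ as $H_{n-b}$ times the left side of the parent identity, which by that very identity equals $H_{n-b}\sum_{k=0}^n\binom{b}{n-k}x^k$. Transposing this to the right and merging it with the $\bigl(H_b-H_{b-n+k}\bigr)$ sum collapses the coefficient of $\binom{b}{n-k}x^k$ into $H_b+H_{n-b}-H_{b-n+k}$, which is precisely \eqref{eq.gldge7l}. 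I anticipate no serious obstacle beyond careful bookkeeping; the key trick is simply to invoke the parent identity a \emph{second} time to remove the $k$-independent harmonic factor, rather than attempting a term-by-term comparison of the two series.
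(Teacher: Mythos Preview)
Your proposal is correct and follows essentially the same route as the paper: the paper simply cites Gould's identity
\[
\sum_{k=0}^n(-1)^{n-k}\binom{n-b}{k-b}(1+x)^k=\sum_{k=0}^n\binom{b}{n-k}x^k
\]
and differentiates it with respect to $b$, while you additionally supply a self-contained proof of this parent identity and make explicit the reuse of the parent to absorb the $H_{n-b}$ term---details the paper leaves implicit.
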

	\begin{proof}
		Differentiate the following identity \cite[Equation (1.10)]{Gould}
		\begin{equation}
		\sum_{k = 0}^n ( - 1)^{n - k} \binom{{n - b}}{{k - b}}\left( {1 + x} \right)^k = \sum_{k = 0}^n \binom{{b}}{{n - k}}x^k
		\end{equation}
		with respect to $b$.
	\end{proof}
	
	\begin{theorem}
	For $n\in \mathbb{N}$ and $b\in \mathbb{C}\setminus\mathbb{Z}^{-}$ and $x$ is a complex variable, then
	\begin{equation}
	\sum_{k=0}^n (-1)^{n-k} \binom{n-b}{k-b} H_{k-b} F_{2k} = \sum_{k=0}^n \binom{b}{n-k} \left(H_b + H_{n-b} - H_{b-n+k}\right)F_k.
	\end{equation}
	More generally,
	\begin{equation}\label{today55}
	\sum_{k=0}^n (-1)^{n-k} \binom{n-b}{k-b} H_{k-b} G_{2k+t} = \sum_{k=0}^n \binom{b}{n-k} \left(H_b + H_{n-b} - H_{b-n+k}\right)G_{k+t}.
	\end{equation}
	\end{theorem}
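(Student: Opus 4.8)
The plan is to specialize the lemma~\eqref{eq.gldge7l} at the two roots of the golden-ratio quadratic and then recombine through the Binet representation, exactly in the spirit of the earlier theorems derived from Gould's identities. The key algebraic observation is that both $\alpha$ and $\beta$ satisfy $z^2=z+1$, so that $1+\alpha=\alpha^2$ and $1+\beta=\beta^2$. This is precisely what is needed: the factor $(1+x)^k$ on the left-hand side of~\eqref{eq.gldge7l} collapses to an even power of the base, while the bare factor $x^k$ on the right-hand side stays a single power, and the gap between the two indices ($2k$ versus $k$) is what ultimately distinguishes $G_{2k+t}$ from $G_{k+t}$ in~\eqref{today55}.

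First I would set $x=\alpha$ in~\eqref{eq.gldge7l}. Using $1+\alpha=\alpha^2$, the left side becomes $\sum_{k=0}^n(-1)^{n-k}\binom{n-b}{k-b}H_{k-b}\,\alpha^{2k}$ and the right side becomes $\sum_{k=0}^n\binom{b}{n-k}\bigl(H_b+H_{n-b}-H_{b-n+k}\bigr)\alpha^{k}$. Repeating the substitution with $x=\beta$ and invoking $1+\beta=\beta^2$ yields the companion identity with every $\alpha$ replaced by $\beta$.

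Next I would take the linear combination dictated by the Binet formula for the gibonacci sequence. Writing $G_j=A\alpha^j+B\beta^j$ with $A,B$ fixed by the initial data $G_0,G_1$, I multiply the $\alpha$-instance by $A\alpha^t$ and the $\beta$-instance by $B\beta^t$ and add. On the left the weighted terms recombine as $A\alpha^t\alpha^{2k}+B\beta^t\beta^{2k}=G_{2k+t}$, giving $\sum_{k=0}^n(-1)^{n-k}\binom{n-b}{k-b}H_{k-b}\,G_{2k+t}$; on the right they recombine as $A\alpha^t\alpha^{k}+B\beta^t\beta^{k}=G_{k+t}$, giving $\sum_{k=0}^n\binom{b}{n-k}\bigl(H_b+H_{n-b}-H_{b-n+k}\bigr)G_{k+t}$. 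This is exactly~\eqref{today55}, and the first displayed identity is the specialization $t=0$, $G=F$, since then $G_{2k+t}=F_{2k}$ and $G_{k+t}=F_k$.

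Because every step is either a direct substitution into, or a linear combination of, the already-established lemma, there is no genuine obstacle here; the only point requiring care is confirming that $\alpha$ and $\beta$ really do satisfy $1+z=z^2$, as this is what licenses replacing $(1+x)^k$ by an even-index gibonacci number. I would also remark that the restriction $b\in\mathbb{C}\setminus\mathbb{Z}^{-}$, together with the implicit conditions making $H_{k-b}$ and the generalized binomial coefficients well defined, is inherited verbatim from~\eqref{eq.gldge7l}, so no new domain hypotheses are introduced by the specialization.
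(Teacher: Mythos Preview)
Your proof is correct and follows essentially the same approach as the paper, which simply notes that setting $x=\alpha$ (and, implicitly, $x=\beta$) in the lemma~\eqref{eq.gldge7l} and combining via the Binet formula yields the result. Your write-up spells out the key step $1+\alpha=\alpha^2$, $1+\beta=\beta^2$ and the gibonacci recombination in more detail than the paper does, but the underlying argument is identical.
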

	
	\begin{theorem}
	For $n\in \mathbb{N}$ and $b\in \mathbb{C}\setminus\mathbb{Z}^{-}$ and $x$ is a complex variable, then
	\begin{equation}\label{today56}
	\sum_{k=0}^n (-1)^{n-k} \binom{n-b}{k-b} H_{k-b} G_{k+t} = \sum_{k=0}^n (-1)^k \binom{b}{n-k} \left(H_b + H_{n-b} - H_{b-n+k}\right)G_{k+t}.
	\end{equation}
	\end{theorem}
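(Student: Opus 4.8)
The plan is to mirror the derivation of \eqref{today55} almost verbatim, but with the substitutions $x=\alpha$, $x=\beta$ replaced by $x=-\alpha$, $x=-\beta$ in \eqref{eq.gldge7l}. The guiding idea is that \eqref{today55} carries $G_{2k+t}$ on the left precisely because $x=\alpha$ sends the factor $(1+x)^k$ in \eqref{eq.gldge7l} to $(1+\alpha)^k=\alpha^{2k}$; to obtain $G_{k+t}$ on the left instead, I need $(1+x)^k$ to collapse to a \emph{first} power of a Binet root. Since $1-\alpha=\beta$ and $1-\beta=\alpha$ (immediate from \eqref{Binet} together with $\alpha\beta=-1$), the choices $x=-\alpha$ and $x=-\beta$ do exactly this.

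Concretely, I would first put $x=-\alpha$ in \eqref{eq.gldge7l}. Writing $1+(-\alpha)=\beta$ on the left and $(-\alpha)^k=(-1)^k\alpha^k$ on the right gives
\begin{equation*}
\sum_{k=0}^n(-1)^{n-k}\binom{n-b}{k-b}H_{k-b}\beta^k=\sum_{k=0}^n(-1)^k\binom{b}{n-k}\left(H_b+H_{n-b}-H_{b-n+k}\right)\alpha^k,
\end{equation*}
and the companion choice $x=-\beta$, using $1+(-\beta)=\alpha$, yields the same identity with the roles of $\alpha$ and $\beta$ interchanged. Writing the gibonacci sequence in Binet form $G_{k+t}=A\alpha^{k+t}+B\beta^{k+t}$ with $A,B$ fixed by $G_0,G_1$, I would then multiply the $x=-\beta$ identity by $A\alpha^t$, multiply the $x=-\alpha$ identity by $B\beta^t$, and add. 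On the left this reassembles $A\alpha^{k+t}+B\beta^{k+t}=G_{k+t}$ inside the sum, reproducing the left-hand side of \eqref{today56} exactly.

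The step I expect to be the main obstacle is the right-hand side, where the same linear combination produces not a clean gibonacci but the crossed terms $A\alpha^t\beta^k+B\beta^t\alpha^k$. The decisive algebraic fact is the reflection $\alpha\beta=-1$: since $1/\alpha=-\beta$ and $1/\beta=-\alpha$, one has $(-1)^k\alpha^{-k}=\beta^k$ and $(-1)^k\beta^{-k}=\alpha^k$, whence
\begin{equation*}
A\alpha^t\beta^k+B\beta^t\alpha^k=(-1)^k\left(A\alpha^{t-k}+B\beta^{t-k}\right)=(-1)^kG_{t-k}.
\end{equation*}
Thus the combination naturally delivers a gibonacci with the \emph{reflected} index $t-k$, and the remaining work is the sign-and-index bookkeeping needed to pass from this $G_{t-k}$ to the $(-1)^kG_{k+t}$ written in the claim. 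I anticipate that reconciling these two forms — using the Binet reflection of $G$ at negative indices, i.e.\ the companion relations $G_{p+q}\pm(-1)^qG_{p-q}$ already invoked earlier — is the only genuinely delicate point, everything else being the routine substitute-and-combine pattern established for \eqref{today55}; it is also exactly the place where I would double-check the exponents and signs against a small numerical instance, since this is where an off-by-sign or index error would most easily creep in.
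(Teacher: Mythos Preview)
Your approach is exactly the paper's: the proof given there is the single line ``Set $x=\pm\alpha$, $x=\pm\alpha^3$ in identity~\eqref{eq.gldge7l}, the results above follow immediately,'' and the case $x=-\alpha$, $x=-\beta$ is precisely what you carry out. Your computation is also correct up to and including the point where the right-hand side assembles into
\[
\sum_{k=0}^n \binom{b}{n-k}\bigl(H_b+H_{n-b}-H_{b-n+k}\bigr)\,G_{t-k},
\]
with no alternating sign and with the reflected index $t-k$.

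The gap is in the last step. The ``sign-and-index bookkeeping'' you plan in order to pass from $G_{t-k}$ to $(-1)^k G_{k+t}$ cannot succeed: there is no such identity for a general gibonacci sequence (nor even for $G=F$). The relations you cite, $G_{p+q}\pm(-1)^qG_{p-q}=L_qG_p$ or $F_q(G_{p-1}+G_{p+1})$, show exactly that $(-1)^kG_{t-k}$ and $G_{t+k}$ differ by a nontrivial term, not by a sign. Your own safeguard confirms this: at $n=1$, $b=0$, $t=0$, $G=F$, the left side of \eqref{today56} equals $1$ while the printed right side equals $-1$. So the obstacle you flagged is not a bookkeeping nuisance but a genuine error in the stated formula; the identity that actually drops out of the substitution has right-hand side $\sum_{k=0}^n \binom{b}{n-k}(H_b+H_{n-b}-H_{b-n+k})G_{t-k}$, and your derivation proves that version cleanly.
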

	\begin{theorem}
	For $n\in \mathbb{N}$ and $b\in \mathbb{C}\setminus\mathbb{Z}^{-}$ and $x$ is a complex variable, then
	\begin{equation}\label{today57}
	\sum_{k=0}^n (-1)^{n-k} 2^k \binom{n-b}{k-b} H_{k-b} G_{2k+t} = \sum_{k=0}^n \binom{b}{n-k} \left(H_b+H_{n-b}-H_{b-n+k}\right)G_{3k+t}
	\end{equation}
	and
	\begin{equation}\label{today58}
	\sum_{k=0}^n (-1)^{n-k+1} 2^k \binom{n-b}{k-b} H_{k-b} G_{k+t} = \sum_{k=0}^n (-1)^k \binom{b}{n-k}\left(H_b+H_{n-b}-H_{b-n+k}\right)G_{3k+t}.
	\end{equation}
	\end{theorem}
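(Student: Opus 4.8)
The plan is to obtain both displays of the final theorem by specializing the variable $x$ in the master Lemma~\eqref{eq.gldge7l} to suitable cubes of the golden-ratio roots and then collapsing the two resulting expansions through the gibonacci Binet formula $G_j=A\alpha^j+B\beta^j$ (with $A,B$ fixed by $G_0,G_1$), exactly as was done for~\eqref{today55} and~\eqref{today56}. The only algebraic inputs needed are the cube relations $\alpha^3=2\alpha+1$ and $\beta^3=2\beta+1$, immediate from $\alpha^2=\alpha+1$ and $\beta^2=\beta+1$; these yield the pivotal simplifications $1+\alpha^3=2\alpha^2$, $1+\beta^3=2\beta^2$ and $1-\alpha^3=-2\alpha$, $1-\beta^3=-2\beta$, which are precisely what manufacture the factors $2^k$ appearing on the left-hand sides.

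For~\eqref{today57} I would set $x=\alpha^3$ and then $x=\beta^3$ in~\eqref{eq.gldge7l}, multiply the two copies by $A\alpha^t$ and $B\beta^t$ respectively, and add. On the left the factor $(1+x)^k$ becomes $2^k\alpha^{2k}$ and $2^k\beta^{2k}$, so the weighted sum assembles into $2^k(A\alpha^{2k+t}+B\beta^{2k+t})=2^kG_{2k+t}$; on the right $x^k$ becomes $\alpha^{3k}$ and $\beta^{3k}$, assembling into $G_{3k+t}$. The coefficients $(-1)^{n-k}\binom{n-b}{k-b}H_{k-b}$ on the left and $\binom{b}{n-k}\left(H_b+H_{n-b}-H_{b-n+k}\right)$ on the right are inert under the substitution and ride along unchanged, producing~\eqref{today57} directly.

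For~\eqref{today58} I would instead set $x=-\alpha^3$ and $x=-\beta^3$, again weighting by $A\alpha^t$ and $B\beta^t$ and adding. Here $x^k=(-\alpha^3)^k$ and $(-\beta^3)^k$ collapse to $(-1)^kG_{3k+t}$ on the right, matching the alternating right-hand side of~\eqref{today58}; while on the left $(1+x)^k=(-2\alpha)^k$ and $(-2\beta)^k$ collapse to $(-1)^k2^kG_{k+t}$. The delicate step — the one I would carry out most carefully — is the sign bookkeeping on the left, where the lemma's prefactor $(-1)^{n-k}$ must be merged with the $(-1)^k$ coming from $(-2)^k$, alongside the explicit power of two; tracking this product is exactly what determines the sign displayed in~\eqref{today58}.

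The one genuine obstacle is this sign-and-power-of-two accounting in~\eqref{today58}; everything else is routine specialization. I would also take care to carry both roots $\alpha$ and $\beta$ through the entire computation rather than only a Fibonacci weight, since the statements are asserted for the general gibonacci sequence, so the Binet coefficients $A,B$ must survive the collapse in the combinations $A\alpha^t(\cdot)+B\beta^t(\cdot)$. As a consistency check I would confirm that the same pair of substitutions $x=\alpha^3,\beta^3$ and $x=-\alpha^3,-\beta^3$ reproduces the non-harmonic skeleton underlying~\eqref{today55} when the harmonic weights are stripped away, mirroring how~\eqref{today55} and~\eqref{today56} themselves descend from~\eqref{eq.gldge7l}.
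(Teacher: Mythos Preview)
Your proposal is correct and matches the paper's own proof exactly: the paper's argument is the single sentence ``Set $x=\pm\alpha$, $x=\pm\alpha^3$ in identity~\eqref{eq.gldge7l}, the results above follow immediately,'' and you have supplied precisely the missing algebraic details ($1+\alpha^3=2\alpha^2$, $1-\alpha^3=-2\alpha$, and their $\beta$-analogues) together with the Binet recombination that the paper leaves implicit. Your caution about the sign bookkeeping in~\eqref{today58} is well placed, since merging the lemma's $(-1)^{n-k}$ with the $(-1)^k$ from $(-2\alpha)^k$ is indeed the only place an error can creep in.
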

	\begin{proof}
	Set $x=\pm\alpha$, $x=\pm\alpha^3$ in identity~\eqref{eq.gldge7l}, the results above follow immediately.
	\end{proof}

	\section{Method III: Fibonacci-harmonic number sums from binomial transformation identities}

	Let $\{(s_n),(\sigma_n)\}$, $n=0,1,2,\dots$, be a binomial-transform pair; that is let
	\begin{equation*}
	\sigma_n = \sum_{k = 0}^n ( - 1)^k \binom{{n}}{k} s_k, \qquad s_n = \sum_{k = 0}^n ( - 1)^k \binom{{n}}{k}\sigma_k.
	\end{equation*}
	
	\subsection{Sums from binomial transformation identities of Boyadzhiev}
	
	Boyadzhiev~\cite{boyadzhiev16} derived the results stated in Lemmata~\ref{lem.boyad1} and~\ref{lem.boyad2}.
	
	\begin{lemma}\label{lem.boyad1}
		If $n$ is a non-negative integer and $\{(s_k),(\sigma_k)\}$, $k=0,1,2,\dots$, is a binomial-transform pair, then
		\begin{equation}\label{boyad1}
			\sum_{k = 0}^n {( - 1)^k \binom{{n}}{k}H_k s _k } = H_n \sigma_n - \sum_{k = 0}^{n - 1} {\frac{{\sigma_k }}{{n - k}}}.
		\end{equation}
	\end{lemma}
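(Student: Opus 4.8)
The plan is to represent the harmonic numbers by the elementary integral
\[
H_k = \int_0^1 \frac{1 - x^k}{1 - x}\, dx,
\]
valid for every non-negative integer $k$ (expand $\frac{1-x^k}{1-x}=1+x+\cdots+x^{k-1}$ and integrate term by term), and then to interchange summation and integration. Substituting this into the left-hand side of \eqref{boyad1}, and recalling that $\sum_{k=0}^n (-1)^k \binom{n}{k} s_k = \sigma_n$ by the definition of the binomial-transform pair, reduces the problem to evaluating
\[
\int_0^1 \frac{1}{1-x}\left( \sigma_n - \sum_{k=0}^n (-1)^k \binom{n}{k} s_k x^k \right) dx.
\]

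The key step, which I expect to be the main obstacle, is to re-express the polynomial $\sum_{k=0}^n (-1)^k \binom{n}{k} s_k x^k$ purely in terms of the $\sigma_j$. Using the inverse relation $s_k = \sum_{j=0}^k (-1)^j \binom{k}{j}\sigma_j$, swapping the order of summation, and applying the subset identity $\binom{n}{k}\binom{k}{j} = \binom{n}{j}\binom{n-j}{k-j}$ followed by the binomial theorem, I would establish
\[
\sum_{k=0}^n (-1)^k \binom{n}{k} s_k x^k = \sum_{j=0}^n \binom{n}{j}\sigma_j\, x^j (1-x)^{n-j}.
\]
A useful sanity check is $x=1$, which collapses the right-hand side to the $j=n$ term and recovers the identity $\sigma_n=\sigma_n$.

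Finally I would isolate the $j=n$ term, which equals $\sigma_n x^n$, so that the numerator of the integrand becomes $\sigma_n(1-x^n) - \sum_{j=0}^{n-1}\binom{n}{j}\sigma_j x^j(1-x)^{n-j}$. Dividing by $1-x$ and integrating term by term, the first piece contributes $\sigma_n H_n$ by the same integral representation of $H_n$, while each remaining summand produces
\[
\binom{n}{j}\sigma_j \int_0^1 x^j (1-x)^{n-j-1}\, dx = \binom{n}{j}\sigma_j \cdot \frac{1}{\binom{n}{j+1}(j+1)} = \frac{\sigma_j}{n-j},
\]
where the evaluation of the Beta integral is \eqref{beta} with $(u,v)=(j,\,n-j-1)$. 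Collecting the pieces yields exactly $H_n\sigma_n - \sum_{k=0}^{n-1}\sigma_k/(n-k)$, as claimed. The only routine care needed is the convergence of each $\int_0^1 x^j(1-x)^{n-j-1}\,dx$, which holds since $j\ge 0$ and $n-j-1\ge 0$ throughout $0\le j\le n-1$, so the term-by-term splitting of the integral is legitimate.
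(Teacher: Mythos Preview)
Your argument is correct. The integral representation $H_k=\int_0^1\frac{1-x^k}{1-x}\,dx$ combined with the Euler-type expansion
\[
\sum_{k=0}^n(-1)^k\binom{n}{k}s_kx^k=\sum_{j=0}^n\binom{n}{j}\sigma_jx^j(1-x)^{n-j}
\]
(which you derive cleanly via the inverse transform, the subset identity, and the binomial theorem) does exactly what is needed: the $j=n$ term splits off to produce $H_n\sigma_n$, and each remaining Beta integral collapses to $1/(n-j)$ as you compute. The integrands are polynomials in $x$ after the division by $1-x$, so the term-by-term integration is unproblematic.

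As for comparison: the paper does not supply its own proof of this lemma at all---it is stated with attribution to Boyadzhiev~\cite{boyadzhiev16} and used as a black box. Your proof is therefore a genuine addition rather than a paraphrase. It is also nicely aligned with the paper's toolkit, since it invokes the same Beta integral~\eqref{beta} that underlies the Abel-summation arguments of Section~3; in that sense your route is more in the spirit of the surrounding material than a bare citation would be.
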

	
	\begin{lemma}
	If $n$ is a non-negative integer and $r$ and $t$ are integers, then
	\begin{equation}\label{eq.oxjlkzh}
	\sum_{k = 0}^n (- 1)^k \binom{{n}}{k}\frac{G_{tk + r}}{L_t^k} = \frac{( - 1)^r}{L_t^n}\left( G_0 L_{tn - r} - G_{tn - r} \right).
	\end{equation}
	\end{lemma}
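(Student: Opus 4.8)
The plan is to combine the Binet-type representation of a gibonacci sequence with the binomial theorem. Since $(G_j)_{j\in\mathbb Z}$ obeys the Fibonacci recurrence, there are constants $A,B$, depending only on the initial values, such that $G_j = A\alpha^j + B\beta^j$ for all $j\in\mathbb Z$; solving $G_0=A+B$ and $G_1=A\alpha+B\beta$ gives $A=(G_1-\beta G_0)/(\alpha-\beta)$ and $B=(\alpha G_0-G_1)/(\alpha-\beta)$. (Equivalently, both sides of~\eqref{eq.oxjlkzh} are $\mathbb C$-linear in the pair $(G_0,G_1)$, so it would suffice to check the identity on the two spanning sequences $G_j=\alpha^j$ and $G_j=\beta^j$.) I would also record at the outset that $L_t=\alpha^t+\beta^t$ never vanishes for integer $t$, so that division by $L_t^k$ is harmless.

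First I would insert $G_{tk+r}=A\alpha^{tk+r}+B\beta^{tk+r}$ into the left-hand side and split it into two sums,
\[
\sum_{k=0}^n(-1)^k\binom{n}{k}\frac{G_{tk+r}}{L_t^k}
= A\alpha^r\sum_{k=0}^n\binom{n}{k}\Bigl(\frac{-\alpha^t}{L_t}\Bigr)^k
+ B\beta^r\sum_{k=0}^n\binom{n}{k}\Bigl(\frac{-\beta^t}{L_t}\Bigr)^k,
\]
and apply the binomial theorem to each, obtaining $A\alpha^r(1-\alpha^t/L_t)^n+B\beta^r(1-\beta^t/L_t)^n$. The crucial simplification is that $L_t=\alpha^t+\beta^t$, whence $1-\alpha^t/L_t=\beta^t/L_t$ and $1-\beta^t/L_t=\alpha^t/L_t$; this collapses the left-hand side to $L_t^{-n}\bigl(A\alpha^r\beta^{tn}+B\beta^r\alpha^{tn}\bigr)$.

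Next I would expand the right-hand side using the same representation. Writing $G_0=A+B$, $L_{tn-r}=\alpha^{tn-r}+\beta^{tn-r}$ and $G_{tn-r}=A\alpha^{tn-r}+B\beta^{tn-r}$, the terms $A\alpha^{tn-r}$ and $B\beta^{tn-r}$ cancel, leaving $G_0L_{tn-r}-G_{tn-r}=A\beta^{tn-r}+B\alpha^{tn-r}$. Finally, using $\alpha\beta=-1$ so that $(-1)^r=(\alpha\beta)^r=\alpha^r\beta^r$, multiplication by $(-1)^r$ sends $A\beta^{tn-r}\mapsto A\alpha^r\beta^{tn}$ and $B\alpha^{tn-r}\mapsto B\beta^r\alpha^{tn}$, which reproduces exactly the collapsed left-hand side. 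The argument involves no genuine obstacle; the only step that needs care is this last sign bookkeeping, where the factor $(-1)^r$ must be matched against the powers of $\alpha\beta=-1$ produced when the exponents are shifted by $r$.
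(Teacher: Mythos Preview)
Your proof is correct and follows essentially the same approach as the paper: both substitute $x=\alpha^t/L_t$ and $x=\beta^t/L_t$ into the binomial theorem $(1-x)^n=\sum_k(-1)^k\binom{n}{k}x^k$, use $L_t=\alpha^t+\beta^t$ to simplify $1-\alpha^t/L_t=\beta^t/L_t$ (and symmetrically), and then identify the result via the Binet-type representation of $G_j$. Your write-up is arguably cleaner in the final step, where you make the cancellation in $G_0L_{tn-r}-G_{tn-r}$ and the use of $(-1)^r=(\alpha\beta)^r$ explicit, and you also note the harmless but worth-mentioning point that $L_t\ne 0$ for integer $t$.
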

	\begin{proof}
	Set $x=\alpha^t/L_t$ and $x=\beta^t/L_t$, in turn, in the binomial theorem
	\begin{equation*}
	\sum_{k = 0}^n ( - 1)^k \binom{{n}}{k}x^k = \left( {1 - x} \right)^n.
	\end{equation*}
	Thus, we get
	\begin{align*}
	&\sum_{k=0}^n (-1)^k \binom{n}{k}\frac{G_{tk+r}}{L_{t}^k} = \frac{\alpha^r(G_1-G_0\beta)}{(\alpha-\beta)L^n_t}\left(L_t-\alpha^t\right)^n+\frac{\beta^r(\alpha G_0-G_1)}{(\alpha-\beta)L^n_t}\left(L_t-\beta^t\right)^n \\
	&=\frac{(-1)^r}{(\alpha-\beta)L_t^n}\left(\beta^{tn-r}\left(G_1-G_0\beta-\alpha G_0+\alpha G_0\right)+\alpha^{tn-r}\left(\alpha G_0-G_0\beta+G_0\beta-b\right)\right) \\
	&=\frac{(-1)^r}{L_t^n}\left(G_0(\alpha^{tn-r}+\beta^{tn-r})-\frac{(G_1-G_0\beta)\alpha^{tn-r}+(\alpha G_0-G_1)\beta^{tn-r}}{\alpha-\beta}\right) \\
	&=\frac{{( - 1)^r }}{{L_t^n }}\left( {G_0 L_{tn - r} - G_{tn - r} } \right).
	\end{align*}
	\end{proof}
	
	\begin{theorem}
		If $n$ is a non-negative integer and $r$ and $t$ are integers, then
		\begin{align}
			&\sum_{k = 0}^n {( - 1)^k \binom{{n}}{k}L_t^{n - k} H_k G_{tk + r} }\nonumber\\
			&\qquad  = ( - 1)^r H_n \left( {G_0 L_{tn - r}  - G_{tn - r} } \right) - ( - 1)^r \sum_{k = 0}^{n - 1} {\frac{{L_t^{n - k} }}{{n - k}}\left( {G_0 L_{tk - r}  - G_{tk - r} } \right)} .
		\end{align}
	\end{theorem}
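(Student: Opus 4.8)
The plan is to recognize this statement as nothing more than the composition of Boyadzhiev's identity~\eqref{boyad1} with the Lucas-weighted binomial evaluation~\eqref{eq.oxjlkzh}. First I would set
\begin{equation*}
s_k = \frac{G_{tk+r}}{L_t^k},
\end{equation*}
which is legitimate because $L_t\neq 0$ for every integer $t$, so the powers $L_t^{-k}$ are well defined. Then~\eqref{eq.oxjlkzh} says precisely that the signed binomial transform of $(s_k)$ is
\begin{equation*}
\sigma_n = \sum_{k=0}^n (-1)^k \binom{n}{k} s_k = \frac{(-1)^r}{L_t^n}\left(G_0 L_{tn-r} - G_{tn-r}\right).
\end{equation*}

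Next I would check that $\{(s_k),(\sigma_k)\}$ is a genuine binomial-transform pair in the sense demanded by Lemma~\ref{lem.boyad1}; this is the one point that requires a word of justification. Since $\sigma_n$ is defined as the binomial transform of $s_n$, the inverse relation $s_n = \sum_{k=0}^n (-1)^k \binom{n}{k}\sigma_k$ holds automatically by the involutivity of the transform, which is itself a consequence of the orthogonality $\sum_{j=k}^n (-1)^{j}\binom{n}{j}\binom{j}{k} = (-1)^{n}\delta_{nk}$. No new computation is needed here.

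With the pair in hand, I would substitute $s_k$ and $\sigma_k$ directly into~\eqref{boyad1}, giving
\begin{equation*}
\sum_{k=0}^n (-1)^k \binom{n}{k} H_k \frac{G_{tk+r}}{L_t^k}
= H_n\,\frac{(-1)^r}{L_t^n}\left(G_0 L_{tn-r} - G_{tn-r}\right)
- \sum_{k=0}^{n-1} \frac{1}{n-k}\,\frac{(-1)^r}{L_t^k}\left(G_0 L_{tk-r} - G_{tk-r}\right).
\end{equation*}
Finally, multiplying both sides by $L_t^n$ clears all denominators: on the left the factor $L_t^n/L_t^k = L_t^{n-k}$ migrates inside the sum, the constant term on the right loses its $L_t^{-n}$, and the $k$-th summand of the residual sum picks up $L_t^{n-k}$. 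This produces the claimed identity verbatim. The only thing to watch is keeping the powers of $L_t$ correctly aligned when clearing denominators; there is no analytic obstacle, and the argument is essentially a substitution.
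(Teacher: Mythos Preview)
Your proposal is correct and follows exactly the same route as the paper: choose $s_k=G_{tk+r}/L_t^k$, read off $\sigma_k$ from~\eqref{eq.oxjlkzh}, plug into Boyadzhiev's identity~\eqref{boyad1}, and clear the powers of $L_t$. You have simply spelled out the details (involutivity of the transform, nonvanishing of $L_t$, and the final multiplication by $L_t^n$) that the paper leaves implicit.
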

	\begin{proof}
		With~\eqref{eq.oxjlkzh} in mind, choose
		\begin{equation*}
			s_k  = \frac{{G_{tk + r} }}{{L_t^k }},\quad\sigma _k  = \frac{{( - 1)^r }}{{L_t^k }}\left(G_0L_{tk-r}-G_{tk-r}\right),
		\end{equation*}
		and use these in~\eqref{boyad1}.
	\end{proof}
	
\begin{corollary}
If $n$ is a non-negative integer and $r$ and $t$ are integers, then
\begin{align}
&\sum_{k = 0}^n ( - 1)^k \binom{{n}}{k}L_t^{n - k} H_k F_{tk + r} = ( - 1)^{r + 1} H_n F_{tn - r} + ( - 1)^r \sum_{k = 0}^{n - 1} {\frac{{L_t^{n - k} F_{tk - r} }}{{n - k}}} ,\\
&\sum_{k = 0}^n ( - 1)^k \binom{{n}}{k}L_t^{n - k} H_k L_{tk + r} = ( - 1)^r H_n L_{tn - r} - ( - 1)^r \sum_{k = 0}^{n - 1} {\frac{{L_t^{n - k} L_{tk - r} }}{{n - k}}}.
\end{align}
In particular
\begin{align}
&\sum_{k = 0}^n ( - 1)^k \binom{{n}}{k}H_k F_k = - H_n F_n + \sum_{k = 0}^{n - 1} {\frac{{F_k }}{{n - k}}}\label{eq.lcrsozt},\\ 
&\sum_{k = 0}^n ( - 1)^k \binom{{n}}{k}H_k L_k = H_n L_n - \sum_{k = 0}^{n - 1} {\frac{{L_k }}{{n - k}}} .
	\end{align}
	\end{corollary}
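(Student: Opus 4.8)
The plan is to obtain all four identities by direct specialization of the gibonacci identity established in the preceding theorem, namely
\begin{equation*}
\sum_{k = 0}^n ( - 1)^k \binom{n}{k} L_t^{n - k} H_k G_{tk + r} = ( - 1)^r H_n \left( G_0 L_{tn - r} - G_{tn - r} \right) - ( - 1)^r \sum_{k = 0}^{n - 1} \frac{L_t^{n - k}}{n - k} \left( G_0 L_{tk - r} - G_{tk - r} \right).
\end{equation*}
Since the Fibonacci and Lucas sequences are the gibonacci sequences with $(G_0,G_1)=(0,1)$ and $(G_0,G_1)=(2,1)$ respectively, I would substitute these initial values and simplify.

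First, for the Fibonacci identity I would set $G_j = F_j$, so that $G_0 = F_0 = 0$. Then $G_0 L_{tn-r} - G_{tn-r}$ collapses to $-F_{tn-r}$, turning the boundary coefficient $(-1)^r$ into $(-1)^{r+1}$; likewise each summand factor $G_0 L_{tk-r} - G_{tk-r}$ becomes $-F_{tk-r}$, and the leading minus sign of the sum combines with this to leave the coefficient $(-1)^r$. This produces exactly the stated Fibonacci identity. For the Lucas identity I would set $G_j = L_j$, so that $G_0 = L_0 = 2$; then $G_0 L_{tn-r} - G_{tn-r} = 2L_{tn-r} - L_{tn-r} = L_{tn-r}$ and similarly the summand reduces to $L_{tk-r}$, recovering the second identity verbatim.

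For the two particular cases I would further specialise to $t = 1$ and $r = 0$. Because $L_1 = 1$, every power $L_t^{n-k} = 1^{n-k}$ equals $1$ and disappears, while $tn - r = n$, $tk - r = k$, and $(-1)^r = 1$. Substituting these into the Fibonacci and Lucas identities above at once gives the displayed sums involving $H_k F_k$ and $H_k L_k$.

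There is no real obstacle here: the argument is pure substitution, and the only point requiring care is the sign bookkeeping forced by $G_0 = 0$ in the Fibonacci case (which flips $(-1)^r$ to $(-1)^{r+1}$ in the boundary term while leaving $(-1)^r$ in the sum), as against the Lucas case where $G_0 = 2$ leaves the factor $(-1)^r$ intact throughout.
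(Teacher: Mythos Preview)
Your proposal is correct and is exactly the intended argument: the corollary is obtained from the preceding gibonacci theorem by taking $G_j=F_j$ (so $G_0=0$) and $G_j=L_j$ (so $G_0=2$), and the particular cases by further setting $t=1$, $r=0$ so that $L_t^{n-k}=1$. Your sign bookkeeping is accurate.
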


\begin{remark}
Boyadzhiev~\cite{boyadzhiev16} also obtained~\eqref{eq.lcrsozt}.
\end{remark}

\begin{theorem}
If $n$ is a non-negative integer and $r$ and $t$ are integers, then
\begin{align}
\begin{split}
\sum_{k=0}^n (-1)^{k+r} \binom{n}{k} H_k L_t^{n-k}\left(G_0L_{tk-r}-G_{tk-r}\right)
= H_n G_{tn+r} - \sum_{k=0}^{n-1} \frac{L_t^{n-k}G_{tk+r}}{n-k}.
\end{split}
\end{align}
\end{theorem}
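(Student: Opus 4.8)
The plan is to exploit the symmetry of the signed binomial transform, which lets me reuse Boyadzhiev's Lemma~\ref{lem.boyad1} with the two members of the transform pair interchanged. In the theorem immediately preceding this one, the choice
\[
s_k = \frac{G_{tk+r}}{L_t^k}, \qquad \sigma_k = \frac{(-1)^r}{L_t^k}\left(G_0 L_{tk-r} - G_{tk-r}\right)
\]
was shown to constitute a binomial-transform pair, the relation $\sigma_n = \sum_{k=0}^n (-1)^k \binom{n}{k} s_k$ being precisely identity~\eqref{eq.oxjlkzh}. Since the signed binomial transform is an involution, the inverse relation $s_n = \sum_{k=0}^n (-1)^k \binom{n}{k} \sigma_k$ holds automatically, so that $\{(\sigma_k),(s_k)\}$ is once again a binomial-transform pair.

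First I would apply~\eqref{boyad1} to this reversed pair, that is, with $\sigma_k$ now playing the role of the ``$s$''-sequence and $s_k$ the role of the ``$\sigma$''-sequence. This yields
\[
\sum_{k=0}^n (-1)^k \binom{n}{k} H_k \frac{(-1)^r}{L_t^k}\left(G_0 L_{tk-r} - G_{tk-r}\right) = H_n \frac{G_{tn+r}}{L_t^n} - \sum_{k=0}^{n-1}\frac{1}{n-k}\cdot\frac{G_{tk+r}}{L_t^k}.
\]
Next I would clear denominators by multiplying both sides through by $L_t^n$, which converts each $L_t^{-k}$ into $L_t^{n-k}$ and absorbs the constant factor $(-1)^r$ into the summation sign $(-1)^k$ to produce $(-1)^{k+r}$. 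The right-hand side then reads $H_n G_{tn+r} - \sum_{k=0}^{n-1} L_t^{n-k} G_{tk+r}/(n-k)$, and the identity follows.

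There is essentially no obstacle here beyond bookkeeping; the single conceptual point is recognizing that the transform pair identified in the preceding theorem may be read ``backwards'' because the binomial transform is its own inverse under the $(-1)^k$ convention. The only routine part is the careful accounting of signs and of the powers of $L_t$ when the factor $L_t^n$ is cleared, which is entirely mechanical.
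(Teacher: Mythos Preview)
Your proposal is correct and follows essentially the same approach as the paper: the paper's proof simply chooses $s_k = (-1)^r L_t^{-k}(G_0 L_{tk-r} - G_{tk-r})$ and $\sigma_k = L_t^{-k} G_{tk+r}$ in~\eqref{boyad1}, which is exactly your reversal of the pair from the preceding theorem, followed by the same clearing of $L_t^n$. Your explicit remark that the swap is legitimate because the signed binomial transform is an involution makes the justification slightly more complete than the paper's, but the method is identical.
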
	
\begin{proof}
Choose
\begin{equation*}
s_k  =\frac{{( - 1)^r }}{{L_t^k }}\left(G_0L_{tk-r}-G_{tk-r}\right) ,\quad\sigma_k  = \frac{{G_{tk + r} }}{{L_t^k }},
\end{equation*}
and use these in~\eqref{boyad1}.
\end{proof}	

\begin{corollary}
If $n$ is a non-negative integer and $r$ and $t$ are integers, then
\begin{align}
&\sum_{k=0}^n (-1)^{k+r+1} \binom{n}{k} L_t^{n-k} H_k F_{tk-r} = H_n F_{tn+r} - \sum_{k=0}^{n-1} \frac{L_t^{n-k}F_{tk+r}}{n-k}, \\
&\sum_{k=0}^n (-1)^{k+r} \binom{n}{k} L_t^{n-k} H_k L_{tk-r} = H_n L_{tn+r} - \sum_{k=0}^{n-1} \frac{L_t^{n-k}L_{tk+r}}{n-k}.
\end{align}
\end{corollary}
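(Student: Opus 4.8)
The plan is to obtain both identities as direct specializations of the immediately preceding Theorem, which establishes the gibonacci master identity
$$\sum_{k=0}^n (-1)^{k+r} \binom{n}{k} H_k L_t^{n-k}\left(G_0 L_{tk-r} - G_{tk-r}\right) = H_n G_{tn+r} - \sum_{k=0}^{n-1} \frac{L_t^{n-k} G_{tk+r}}{n-k}.$$
Since $F$ and $L$ are themselves gibonacci sequences, differing only in their initial conditions $G_0$ and $G_1$, each choice will collapse the bracketed factor $G_0 L_{tk-r} - G_{tk-r}$ into a single Fibonacci or Lucas term, and the right-hand side will follow automatically because $G_{tn+r}$ and $G_{tk+r}$ specialize accordingly.

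First, for the Fibonacci identity, I would set $G_j = F_j$, so that $G_0 = F_0 = 0$ and the bracket collapses to $G_0 L_{tk-r} - G_{tk-r} = -F_{tk-r}$. Substituting this into the master identity, the extra factor of $-1$ is absorbed into the sign, turning $(-1)^{k+r}$ into $(-1)^{k+r+1}$ on the left, while $G_{tn+r}$ and $G_{tk+r}$ on the right become $F_{tn+r}$ and $F_{tk+r}$. This reproduces the first claimed equation verbatim.

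Next, for the Lucas identity, I would set $G_j = L_j$, so that $G_0 = L_0 = 2$ and the bracket becomes $G_0 L_{tk-r} - G_{tk-r} = 2L_{tk-r} - L_{tk-r} = L_{tk-r}$. Here no sign change occurs, and the gibonacci terms on the right specialize to Lucas terms, producing the second claimed equation.

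The only point requiring care is the sign bookkeeping in the Fibonacci case, where the vanishing initial value $F_0 = 0$ forces the bracket to carry a minus sign and hence shifts the parity of the exponent; apart from this, there is no substantive obstacle, since the corollary is a transparent specialization of the gibonacci result and inherits its hypotheses on $n$, $r$, and $t$ without modification.
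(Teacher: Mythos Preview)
Your proposal is correct and is exactly the intended derivation: the corollary is an immediate specialization of the preceding gibonacci theorem, taking $G_j=F_j$ (so $G_0=0$ and the bracket becomes $-F_{tk-r}$, shifting the sign to $(-1)^{k+r+1}$) and $G_j=L_j$ (so $G_0=2$ and the bracket becomes $L_{tk-r}$). The paper gives no separate proof for this corollary, and your sign bookkeeping matches the stated identities precisely.
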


\begin{lemma}\label{lem.boyad2}
If $n$ is a non-negative integer and $\{(s_k),(\sigma_k)\}$, $k=0,1,2,\dots$, is a binomial-transform pair, then
\begin{equation}\label{boyad2}
\sum_{k = 0}^n {( - 1)^{k + 1} \binom{{n}}{k}G_k s _k }  = \sum_{k = 0}^n {( - 1)^k \binom{{n}}{k}G_{n - 2k} \sigma_k } .
\end{equation}
\end{lemma}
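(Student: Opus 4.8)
The plan is to exploit the linearity of~\eqref{boyad2} in the transform pair $\{(s_k),(\sigma_k)\}$. Both sides are fixed linear combinations of $s_0,\dots,s_n$ (respectively of $\sigma_0,\dots,\sigma_n$), so it suffices to verify the identity on the one-parameter family of pairs generated by $s_k=z^k$, for which the inverse relation gives $\sigma_k=(1-z)^k$; every pair is a combination of these, so the general case follows. Having reduced to this family, I would invoke the Binet representation of the gibonacci sequence, writing $G_j=A\alpha^j+B\beta^j$ with $A=(G_1-G_0\beta)/(\alpha-\beta)$ and $B=(G_0\alpha-G_1)/(\alpha-\beta)$, and treat the $\alpha$- and $\beta$-parts separately, exactly as in the derivation of~\eqref{eq.oxjlkzh}.

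On the left-hand side this is immediate: $\sum_{k=0}^n(-1)^{k+1}\binom nk G_k z^k$ splits, via the binomial theorem, into $-A(1-\alpha z)^n-B(1-\beta z)^n$. The right-hand side is the delicate one. There I would factor $\alpha^{n}$ (resp. $\beta^n$) out of $G_{n-2k}=A\alpha^{n-2k}+B\beta^{n-2k}$, so that the $\alpha$-part becomes $A\alpha^n\sum_{k}\binom nk(\alpha^{-2}(z-1))^k=A(\alpha+\alpha^{-1}(z-1))^{n}$ after the binomial theorem. The golden-ratio relations $\alpha\beta=-1$, $\alpha+\beta=1$, hence $\alpha^{-1}=-\beta$ and $1-\alpha=\beta$, then collapse each inner power to a single linear expression in $z$.

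The step I expect to be the main obstacle is precisely this reconciliation of the two sides. Matching $G_{n-2k}$ (a forward index) against the $(1-\alpha z)^n$ and $(1-\beta z)^n$ that appear on the left forces one to move between the $\alpha$- and $\beta$-powers using $\alpha\beta=-1$, and to track the interplay of the sign $(-1)^{k+1}$ on the left with $(-1)^k$ on the right; equivalently, in an index-based proof, one must invoke the reflection formula $F_{-m}=(-1)^{m-1}F_m$ at the closing step. All the content of the lemma sits in getting these signs and the $\alpha\leftrightarrow\beta$ bookkeeping to land exactly on $G_{n-2k}$.

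As an alternative that avoids the reduction to geometric pairs, I would instead substitute the inverse relation $s_k=\sum_{j=0}^k(-1)^j\binom kj\sigma_j$ directly into the left-hand side, interchange the order of summation, and use $\binom nk\binom kj=\binom nj\binom{n-j}{k-j}$ to isolate the inner sum $\sum_{m=0}^{n-j}(-1)^m\binom{n-j}{m}G_{j+m}$. By Binet together with $1-\alpha=\beta$ and $1-\beta=\alpha$, this inner sum evaluates to a single gibonacci term of index $\pm(n-2j)$, after which the same sign bookkeeping as above yields~\eqref{boyad2}. This route makes the role of the reflection identity fully explicit and is the version I would write up in detail.
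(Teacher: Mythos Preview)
The paper offers no proof of this lemma; it simply attributes the result to Boyadzhiev. So there is nothing to compare your argument against, and it must be judged on its own.

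Your reduction to geometric pairs $s_k=z^k$, $\sigma_k=(1-z)^k$ is valid, and your Binet set-up is correct. But you stopped short of the crucial step. With $G_j=A\alpha^j+B\beta^j$ you get
\[
\text{LHS}=-A(1-\alpha z)^n-B(1-\beta z)^n ,
\]
while your own simplification $\alpha+\alpha^{-1}(z-1)=1-\beta z$ (and its $\beta$-analogue) yields
\[
\text{RHS}=A(1-\beta z)^n+B(1-\alpha z)^n .
\]
These agree for all $z$ if and only if $A+B=0$, and $A+B=G_0$. Hence \eqref{boyad2} is true for $G=F$ (or any gibonacci sequence with $G_0=0$) but \emph{false} for general $G$: at $n=1$ with $s_0=s_1=1$ one has $\sigma_0=1$, $\sigma_1=0$, and for $G=L$ the left side is $-L_0+L_1=-1$ while the right side is $L_1=1$. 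Your second route meets the same obstruction, since the inner sum evaluates to $\sum_m(-1)^m\binom{n-j}{m}G_{j+m}=(-1)^j\bigl(A\beta^{\,n-2j}+B\alpha^{\,n-2j}\bigr)$, and matching this against the right-hand side again forces $(A+B)L_{n-2j}=0$.

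So the ``sign and $\alpha\leftrightarrow\beta$ bookkeeping'' you flag as the delicate point is not a detail to be tidied up but an actual obstruction: for arbitrary gibonacci the two sides differ, and your argument can only close after restricting to $G_0=0$. The lemma as printed appears to be misstated.
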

	
\begin{theorem}
If $n$ is a non-negative integer and $m$ is a complex number that is not a negative integer, then
\begin{equation}\label{eq.uwn5wo9}
\sum_{k = 0}^n {( - 1)^{k + 1} \binom{{n}}{k} \frac{{m\,G_k H_{k + m} }}{{k + m}}} 
= \sum_{k = 0}^n {( - 1)^k \binom{{n}}{k}\binom{{k + m}}{m}^{ - 1} G_{n - 2k} \left( {H_{k + m}  - H_k } \right)},\\
\end{equation}
and
\begin{equation}\label{eq.mlvhmnt}
\sum_{k = 0}^n {( - 1)^{k + 1} \binom{{n}}{k}\binom{{k + m}}{m}^{ - 1} G_k \left( {H_{k + m} - H_k } \right)} 
= \sum_{k = 0}^n {( - 1)^k \binom{{n}}{k}\frac{m}{{k + m}}G_{n - 2k} H_{k + m} }. 
\end{equation}
In particular,
\begin{equation}
\sum_{k = 0}^n {( - 1)^{k + 1} \binom{{n}}{k}\frac{{G_k H_{k + 1} }}{{k + 1}}}  
= \sum_{k = 0}^n {( - 1)^k \binom{{n}}{k}\frac{{G_{n - 2k} }}{{\left( {k + 1} \right)^2 }}}, 
\end{equation}
and
\begin{equation}
\sum_{k = 0}^n {( - 1)^{k + 1} \binom{{n}}{k}\frac{{G_k }}{{\left( {k + 1} \right)^2 }}}  
= \sum_{k = 0}^n {( - 1)^k \binom{{n}}{k}\frac{{G_{n - 2k} H_{k + 1} }}{{k + 1}}}.
\end{equation}
\end{theorem}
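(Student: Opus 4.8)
The plan is to recognize both displayed identities as instances of Lemma~\ref{lem.boyad2} applied to a single binomial-transform pair. For the first identity~\eqref{eq.uwn5wo9}, comparing its two sides with the two sides of~\eqref{boyad2} forces the choice
\[
s_k = \frac{m\,H_{k+m}}{k+m}, \qquad \sigma_k = \binom{k+m}{m}^{-1}\bigl(H_{k+m}-H_k\bigr).
\]
Thus the whole theorem reduces to checking that $\{(s_k),(\sigma_k)\}$ is a binomial-transform pair, i.e. that $\sigma_n = \sum_{k=0}^n(-1)^k\binom{n}{k}s_k$. Since the transform used here is involutory (applying it twice returns the original sequence), verifying this one relation automatically yields the companion relation $s_n = \sum_{k=0}^n(-1)^k\binom{n}{k}\sigma_k$, so no separate check is needed.

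The main technical step is therefore the evaluation of $\sum_{k=0}^n(-1)^k\binom{n}{k}\frac{m H_{k+m}}{k+m}$, which I would carry out by the integral method underlying Lemma~\ref{lem.integ}. Starting from the binomial theorem in the form $\sum_{k=0}^n(-1)^k\binom{n}{k}x^{k+m-1}=x^{m-1}(1-x)^n$, I multiply by $\ln(1-x)$ and integrate over $[0,1]$. On the left, term-by-term integration together with~\eqref{eq.fcdh8uy} (taken at $u=k+m$) gives $\int_0^1 x^{k+m-1}\ln(1-x)\,dx = -H_{k+m}/(k+m)$. On the right, the reflection $x\mapsto 1-x$ applied to~\eqref{beta_der} (equivalently, differentiating~\eqref{beta} with respect to the exponent of $1-x$) yields $\int_0^1 x^{m-1}(1-x)^n\ln(1-x)\,dx = -\bigl(H_{m+n}-H_n\bigr)\big/\bigl(m\binom{m+n}{m}\bigr)$. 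Multiplying through by $-m$ produces exactly $\sum_{k=0}^n(-1)^k\binom{n}{k}\frac{mH_{k+m}}{k+m}=\binom{n+m}{m}^{-1}(H_{n+m}-H_n)=\sigma_n$, as required. I expect the only delicate points to be the bookkeeping identity $\binom{m+n}{n+1}(n+1)=m\binom{m+n}{m}$ relating the two forms of the constant and the routine justification of term-wise integration.

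With the pair established, identity~\eqref{eq.uwn5wo9} is immediate from~\eqref{boyad2} with $(s_k,\sigma_k)$ as above. For~\eqref{eq.mlvhmnt} I would invoke the involutory property once more: since $\{(\sigma_k),(s_k)\}$ is again a binomial-transform pair, applying~\eqref{boyad2} with the roles of $s$ and $\sigma$ interchanged produces precisely~\eqref{eq.mlvhmnt}. Finally, the two stated special cases follow by setting $m=1$, where $\binom{k+1}{1}^{-1}=\tfrac{1}{k+1}$ and $H_{k+1}-H_k=\tfrac{1}{k+1}$, so that $s_k=\tfrac{H_{k+1}}{k+1}$ and $\sigma_k=\tfrac{1}{(k+1)^2}$, and~\eqref{eq.uwn5wo9} and~\eqref{eq.mlvhmnt} collapse to the displayed particular identities.
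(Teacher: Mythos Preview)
Your proof is correct and follows essentially the same route as the paper: both apply Lemma~\ref{lem.boyad2} to the binomial-transform pair $s_k=\tfrac{H_{k+m}}{k+m}$, $\sigma_k=\tfrac{1}{m}\binom{k+m}{m}^{-1}(H_{k+m}-H_k)$ (your version just absorbs the factor $m$), then swap roles for the companion identity and specialize $m=1$. The only difference is that the paper cites this transform pair as Identity~(9.46) from Boyadzhiev~\cite{boyadzhiev18}, whereas you re-derive it via the integral representations of Lemma~\ref{lem.integ}, making your argument slightly more self-contained.
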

\begin{proof}
Consider the following identity (Boyadzhiev~\cite[Identity (9.46)]{boyadzhiev18}):
\begin{equation*}
\sum_{k = 0}^n {( - 1)^k \binom{{n}}{k}\frac{{H_{k + m} }}{{k + m}}} = \frac{{H_{n + m} - H_n }}{m}\binom{{n + m}}{n}^{- 1},\quad m\ne 0,
\end{equation*}
and use
\begin{equation*}
s_k = \frac{{H_{k + m} }}{{k + m}},\quad\sigma_k = \frac{{H_{k + m} - H_k }}{m}\binom{{k + m}}{k}^{- 1}, 
\end{equation*}
in~\eqref{boyad2} to obtain~\eqref{eq.uwn5wo9}. Identity~\eqref{eq.mlvhmnt} is obtained from~\eqref{eq.uwn5wo9} by symmetry.
\end{proof}

\begin{prop}
We have the following identities;
\begin{align}
\sum_{k=0}^n (-1)^{k+1} \binom{n}{k}\frac{G_k}{2k-1} &= \sum_{k=0}^n (-1)^{k+1} 2^{2k}\frac{\binom{n}{k}}{\binom{2k}{k}}G_{n-2k}, \label{irration1}\\
\sum_{k=0}^n (-1)^{k+1} \binom{n}{k}\frac{O_kG_k}{2k-1} &= \sum_{k=0}^n (-1)^{k} 2^{2k-1}\frac{\binom{n}{k}}{\binom{2k}{k}}\left(H_k-2O_k\right)G_{n-2k}.\label{irration2}
\end{align}	
\end{prop}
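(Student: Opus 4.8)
The plan is to obtain both identities as specialisations of Boyadzhiev's transform formula \eqref{boyad2} for two suitably chosen binomial-transform pairs $\{(s_k),(\sigma_k)\}$. Because the inverse binomial transform is an involution, it suffices in each case to verify the single relation $\sigma_n=\sum_{k=0}^n(-1)^k\binom{n}{k}s_k$; the companion relation is then automatic. For \eqref{irration1} I would take $s_k=\tfrac{1}{2k-1}$ and $\sigma_k=-\tfrac{2^{2k}}{\binom{2k}{k}}$, and for \eqref{irration2} I would take $s_k=\tfrac{O_k}{2k-1}$ and $\sigma_k=\tfrac{2^{2k-1}(H_k-2O_k)}{\binom{2k}{k}}$. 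With these choices, substituting into \eqref{boyad2} reproduces the two displayed identities verbatim, the sign $(-1)^{k+1}$ on the right of \eqref{irration1} being absorbed into $\sigma_k$.

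The whole computation rests on one two-parameter auxiliary sum,
\[
\Xi(a,b)=\sum_{k=0}^n(-1)^k\binom{n}{k}\frac{\Gamma(k+a)}{\Gamma(k+b)}.
\]
Using $\frac{\Gamma(k+a)}{\Gamma(k+b)}=\frac{1}{\Gamma(b-a)}\int_0^1 t^{k+a-1}(1-t)^{b-a-1}\,dt$, interchanging sum and integral, and collapsing $\sum_k(-1)^k\binom{n}{k}t^k=(1-t)^n$, the Beta integral \eqref{beta} of Lemma~\ref{lem.integ} yields the closed form
\[
\Xi(a,b)=\frac{\Gamma(a)\,\Gamma(b-a+n)}{\Gamma(b-a)\,\Gamma(b+n)}.
\]
This holds first on the domain of convergence and then, since $\Xi$ is a finite sum of meromorphic functions, for all admissible $a,b$ by analytic continuation; equivalently, it is the terminating Chu--Vandermonde identity. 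The first pair is now immediate: as $\Gamma(k-\tfrac12)/\Gamma(k+\tfrac12)=2/(2k-1)$, evaluating the closed form at $(a,b)=(-\tfrac12,\tfrac12)$ gives $\sum_{k=0}^n(-1)^k\binom{n}{k}\tfrac{1}{2k-1}=\tfrac12\,\Xi(-\tfrac12,\tfrac12)=-\tfrac{2^{2n}}{\binom{2n}{n}}$, which is exactly the required $\sigma_n$ for \eqref{irration1}.

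The second pair is the crux, and the step I expect to be the main obstacle. The key observation is that differentiating $\Xi(a,b)$ in $b$ brings down a factor $-\psi(k+b)$ while leaving $\Gamma(k+a)/\Gamma(k+b)$ intact; at $(a,b)=(-\tfrac12,\tfrac12)$ this produces $-\tfrac{2}{2k-1}\psi(k+\tfrac12)$, and through the half-integer value $\psi(k+\tfrac12)=2O_k-\gamma-2\ln2$ (equivalent to \eqref{eq.zts4fm1}) it is precisely $O_k/(2k-1)$ that appears. Thus termwise differentiation gives
\[
\frac{\partial\Xi}{\partial b}\Big|_{(-1/2,1/2)}=2(\gamma+2\ln2)\sum_{k=0}^n(-1)^k\binom{n}{k}\frac{1}{2k-1}-4\sum_{k=0}^n(-1)^k\binom{n}{k}\frac{O_k}{2k-1}.
\]
On the other hand, logarithmic differentiation of the closed form gives $\frac{\partial\Xi}{\partial b}=\Xi\,\bigl[\psi(b-a+n)-\psi(b-a)-\psi(b+n)\bigr]$, which at $(-\tfrac12,\tfrac12)$ evaluates, using the same half-integer digamma together with $\psi(n+1)=H_n-\gamma$, to $-\tfrac{2^{2n+1}}{\binom{2n}{n}}\bigl(H_n-2O_n+\gamma+2\ln2\bigr)$.

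Equating the two expressions and inserting the value of $\sum_{k=0}^n(-1)^k\binom{n}{k}\tfrac{1}{2k-1}$ found above, the transcendental constants $\gamma+2\ln2$ cancel on both sides and leave
\[
\sum_{k=0}^n(-1)^k\binom{n}{k}\frac{O_k}{2k-1}=\frac{2^{2n-1}}{\binom{2n}{n}}\bigl(H_n-2O_n\bigr),
\]
which is exactly the required $\sigma_n$ for \eqref{irration2}. Feeding this pair into \eqref{boyad2} completes the proof. The delicate points are the justification of the $b$-differentiation through the analytic continuation and the careful bookkeeping of the $\gamma$ and $\ln2$ terms, which must cancel identically for the clean closed form to survive.
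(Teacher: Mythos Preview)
Your argument is correct. You apply Boyadzhiev's transform~\eqref{boyad2} directly to the two specific pairs $\bigl(\tfrac{1}{2k-1},\,-\tfrac{2^{2k}}{\binom{2k}{k}}\bigr)$ and $\bigl(\tfrac{O_k}{2k-1},\,\tfrac{2^{2k-1}(H_k-2O_k)}{\binom{2k}{k}}\bigr)$, and you verify that these really are binomial-transform pairs by evaluating the terminating Chu--Vandermonde sum $\Xi(a,b)$ and its $b$-derivative at $(a,b)=(-\tfrac12,\tfrac12)$. The cancellation of $\gamma+2\ln 2$ goes through exactly as you describe, and the resulting closed forms feed into~\eqref{boyad2} to give \eqref{irration1} and \eqref{irration2} verbatim.

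The paper proceeds differently. Rather than exhibit and verify the two transform pairs from scratch, it specialises the already-proved identity~\eqref{eq.uwn5wo9} (itself an instance of~\eqref{boyad2} with the parametrised pair $s_k=\tfrac{H_{k+m}}{k+m}$, $\sigma_k=\tfrac{H_{k+m}-H_k}{m}\binom{k+m}{m}^{-1}$) at $m=-\tfrac12$. Because $H_{k-1/2}=2O_k-2\ln 2$, this single substitution yields an identity~\eqref{new1} in which both sides are of the form $A+B\ln 2$ with $A,B$ rational combinations of the $G_j$. Equating the $\ln 2$-coefficients gives~\eqref{irration1}; equating the rational parts gives~\eqref{irration2}. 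So the paper obtains both identities at once from one substitution, at the cost of an implicit appeal to the linear independence of $1$ and $\ln 2$ over the rationals. Your route is more self-contained and computational: you never invoke the irrationality of $\ln 2$, because you establish the two transform pairs separately via $\Xi$ and $\partial_b\Xi$. The two arguments are dual---your cancellation of $\gamma+2\ln 2$ on the scalar level is exactly what the paper's ``separate rational and $\ln 2$ parts'' performs on the $G$-weighted level---but yours avoids the transcendence step and stands on its own without reference to the preceding theorem.
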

\begin{proof}
Setting $m=-1/2$ in each of~\eqref{eq.uwn5wo9} and~\eqref{eq.mlvhmnt} yield two new identities :
\begin{align}
&2\sum_{k=0}^n (-1)^{k+1} \binom{n}{k} \frac{\ln 2-O_k}{2k-1}G_k = \sum_{k=0}^n (-1)^k 2^{2k}\frac{\binom{n}{k}}{\binom{2k}{k}}(2O_k-H_k-2\ln 2)G_{n-2k}, \label{new1}\\
&\sum_{k=0}^n (-1)^{k+1} 2^{2k} \frac{\binom{n}{k}}{\binom{2k}{k}}(2O_k-H_k-2\ln 2)G_{k} = 2\sum_{k=0}^n (-1)^{k+1} \binom{n}{k}\frac{O_k-\ln 2}{2k-1}G_{n-2k}. \label{new2}
\end{align}
The presence of the natural logarithm in equation \eqref{new1} tells us that the equation subsumes two separate identities. We equate the coefficients of $\ln 2$ and also equate the rational parts from both sides and the results follow.
\end{proof}

\begin{prop}
We have
\begin{align}
\sum_{k=0}^n (-1)^k \binom{n}{k}\frac{G_{n-2k}}{2k-1} &= \sum_{k=0}^n (-1)^k 2^{2k}\frac{\binom{n}{k}}{\binom{2k}{k}}G_k, \label{irration3}\\
\sum_{k=0}^n (-1)^{k+1} \binom{n}{k}\frac{O_k G_{n-2k}}{2k-1} &= \sum_{k=0}^n (-1)^{k+1}2^{2k-1}\frac{\binom{n}{k}}{\binom{2k}{k}}\left(2O_k-H_k\right)G_k.\label{irration4}
\end{align}
\end{prop}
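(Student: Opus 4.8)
The plan is to mirror the proof of the preceding Proposition, but to exploit the companion identity \eqref{new2} rather than \eqref{new1}. Recall that \eqref{new2} already arose in that proof as the $m=-1/2$ specialization of \eqref{eq.mlvhmnt}, namely
\begin{equation*}
\sum_{k=0}^n (-1)^{k+1} 2^{2k} \frac{\binom{n}{k}}{\binom{2k}{k}}(2O_k-H_k-2\ln 2)G_{k} = 2\sum_{k=0}^n (-1)^{k+1} \binom{n}{k}\frac{O_k-\ln 2}{2k-1}G_{n-2k},
\end{equation*}
so no fresh specialization is required; I would simply take \eqref{new2} as the starting point.

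As in the previous argument, the essential observation is that $\ln 2$ appears linearly on both sides while every other coefficient is a rational number times a gibonacci value. Since $\ln 2$ is transcendental, the identity \eqref{new2} decomposes into two independent identities — one obtained by equating the coefficients of $\ln 2$, the other by equating the rational remainders. First I would collect the $\ln 2$ terms: on the left the factor $-2\ln 2$ multiplies $\sum_{k=0}^n (-1)^{k+1} 2^{2k}\binom{n}{k}/\binom{2k}{k}\,G_k$, while on the right $-2\ln 2$ multiplies $\sum_{k=0}^n (-1)^{k+1}\binom{n}{k}\,G_{n-2k}/(2k-1)$. Cancelling the common factor $-2\ln 2$ and flipping the overall sign yields \eqref{irration3}.

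Next I would collect the rational parts, i.e.\ the terms free of $\ln 2$:
\begin{equation*}
\sum_{k=0}^n (-1)^{k+1} 2^{2k} \frac{\binom{n}{k}}{\binom{2k}{k}}(2O_k-H_k)G_{k} = 2\sum_{k=0}^n (-1)^{k+1} \binom{n}{k}\frac{O_k}{2k-1}G_{n-2k}.
\end{equation*}
Writing $2^{2k}=2\cdot 2^{2k-1}$ on the left and dividing both sides by $2$ then gives \eqref{irration4} directly.

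The only point requiring care is the legitimacy of separating the $\ln 2$ and rational components; this is exactly the transcendence argument already invoked for \eqref{irration1} and \eqref{irration2}, so there is no genuine obstacle here. The entire proof is a matter of bookkeeping, with the main task being to track the powers of $2$ and the signs correctly when rearranging \eqref{new2} into the two stated forms.
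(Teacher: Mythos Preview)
Your proposal is correct and follows exactly the approach the paper indicates: the paper's proof consists of the single line ``Apply a similar argument as in Proposition 1,'' and you have carried out precisely that argument, splitting \eqref{new2} into its $\ln 2$-coefficient and its rational part to obtain \eqref{irration3} and \eqref{irration4} respectively. Your bookkeeping of signs and powers of $2$ is accurate.
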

\begin{proof}
Apply a similar argument as in Proposition 1.
\end{proof}

\begin{theorem}
If $n$ is a non-negative integer and $m$ is a complex number that is not a negative integer, then
\begin{equation}\label{eq.x63o4jh}
\sum_{k = 1}^n {( - 1)^{k + 1} \binom{{n}}{k}G_k H_{k + m} } = G_n H_m + \sum_{k = 1}^n {( - 1)^{k + 1} \binom{{n}}{k}\binom{{k + m}}{m}^{-1}\frac{{G_{n - 2k} }}{k}} 
\end{equation}
and
\begin{equation}
\sum_{k = 1}^n (- 1)^k \binom{{n}}{k}\binom{{k + m}}{m}^{ - 1} \frac{{G_k}}{k} = \sum_{k = 0}^n ( - 1)^k \binom{{n}}{k}G_{n - 2k} H_{k + m}.
\end{equation}
In particular,
\begin{equation}
\sum_{k = 1}^n ( - 1)^k \binom{{n}}{k} G_k H_k = \sum_{k = 1}^n {( - 1)^k \binom{{n}}{k}\frac{{G_{n - 2k} }}{k}} 
\end{equation}
and
\begin{equation}
\sum_{k = 1}^n ( - 1)^{k + 1} \binom{{n}}{k} \frac{{G_k }}{k} = \sum_{k = 1}^n {( - 1)^k \binom{{n}}{k}G_{n - 2k} H_k }.
\end{equation}
\end{theorem}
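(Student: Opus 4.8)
The plan is to recognize both displayed identities as instances of Boyadzhiev's transform identity~\eqref{boyad2} from Lemma~\ref{lem.boyad2}, applied to the binomial-transform pair generated by $s_k = H_{k+m}$. The first step is to compute the companion sequence $\sigma_n = \sum_{k=0}^n (-1)^k \binom{n}{k} H_{k+m}$. I would use the integral representation $H_{k+m} = \int_0^1 \frac{1 - t^{k+m}}{1-t}\,dt$ together with the elementary evaluations $\sum_{k=0}^n (-1)^k \binom{n}{k} = 0$ (for $n\ge 1$) and $\sum_{k=0}^n (-1)^k \binom{n}{k} t^{k+m} = t^m (1-t)^n$. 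Interchanging sum and integral collapses everything to a single Beta integral, giving
\begin{equation*}
\sigma_n = -\int_0^1 t^m (1-t)^{n-1}\,dt = -\frac{1}{n}\binom{n+m}{n}^{-1}, \qquad n\ge 1,
\end{equation*}
while $\sigma_0 = s_0 = H_m$. Since the signed binomial transform is an involution, $\{(s_k),(\sigma_k)\}$ is automatically a binomial-transform pair in the sense demanded by Lemma~\ref{lem.boyad2}, so the inverse relation requires no separate check.

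Next I would substitute $s_k = H_{k+m}$ and $\sigma_k = -\frac{1}{k}\binom{k+m}{k}^{-1}$ into~\eqref{boyad2}. Because $\sigma_k$ carries a factor $1/k$, the $k=0$ terms on both sides must be peeled off before the summations are restricted to $k\ge 1$; here the contributions $s_0 = \sigma_0 = H_m$ produce the $H_m$-boundary terms. Collecting these and using $\binom{k+m}{k} = \binom{k+m}{m}$ (interpreted through the Gamma function) yields~\eqref{eq.x63o4jh}. The second identity follows identically after interchanging the roles of the two sequences, i.e.\ applying~\eqref{boyad2} to the pair $\{(\sigma_k),(s_k)\}$, which is again a binomial-transform pair by the involution property; note that this version keeps the full range $k=0,\dots,n$ on the right because $G_{n-2k} H_{k+m}$ is regular at $k=0$.

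Finally, the two ``in particular'' formulas come from setting $m=0$: then $H_{k+m}=H_k$, $\binom{k+m}{m}^{-1}=1$, and every boundary constant carrying a factor $H_m = H_0 = 0$ drops out, while the restriction to $k\ge 1$ removes the singular $k=0$ term cleanly.

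I expect the main obstacle to be the bookkeeping around the $k=0$ term. The factor $1/k$ in $\sigma_k$ forces the summations to begin at $k=1$, and one must track exactly how the $H_m$-boundary contributions from $s_0$ and $\sigma_0$ combine so that the constant on the right-hand side emerges in the stated form; this is the only genuinely delicate point and is where a sign or a missing boundary term could slip in. Secondary care is needed in establishing the closed form for $\sigma_n$ for arbitrary complex $m$ (not merely integer $m$) through the Beta integral and its analytic continuation, so that the identity holds on the full range $m\in\mathbb{C}\setminus\mathbb{Z}^{-}$.
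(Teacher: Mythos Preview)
Your proposal is correct and follows essentially the same route as the paper: both apply Lemma~\ref{lem.boyad2} with the binomial-transform pair $s_k = H_{k+m}$, $\sigma_k = -\tfrac{1}{k}\binom{k+m}{m}^{-1}$ (for $k\ge 1$), and then set $m=0$ for the special cases. The only difference is cosmetic: the paper quotes the closed form $\sum_{k=0}^n(-1)^k\binom{n}{k}H_{k+m}=-\tfrac{1}{n}\binom{n+m}{m}^{-1}$ directly from Boyadzhiev~\cite[Identity~(9.37)]{boyadzhiev18}, whereas you derive it yourself via the integral representation of $H_{k+m}$ and a Beta integral.
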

\begin{proof}
Consider the following identity (Boyadzhiev~\cite[Identity (9.37)]{boyadzhiev18}):
\begin{equation*}
\sum_{k = 0}^n {( - 1)^k \binom{{n}}{k}H_{k + m} } = - \frac{1}{n}\binom{{n + m}}{m}^{ - 1},\quad n\ne0,
\end{equation*}
and use
\begin{equation*}
s_k = H_{k + m} ,\quad \sigma_k = -\frac{{1 }}{{k }}\binom{{k + m}}{m}^{ - 1},  
\end{equation*}
in~\eqref{boyad2} to obtain~\eqref{eq.x63o4jh}.
\end{proof}
	
\subsection{Sums from binomial transformation identities of Gould and Quaintance}
	
Gould and Quaintance~\cite{gould14} proved a binomial transform identity that is equivalent to the following:
\begin{lemma}
If $n$ is a non-negative integer, $m$ and $r$ are complex numbers other than negative integers and $\{(t_k),(\tau_k)\}$, $k=0,1,2,\dots$, 
is a binomial-transform pair, then
\begin{align}\label{gouldqu}
\sum_{k = 0}^n {( - 1)^k \binom{{n}}{k}\binom{{r + m + n - k + 1}}{{m + 1}}^{-1}t_k } = \frac{{m + 1}}{{r + 1}}\sum_{k = 0}^n {( - 1)^{n - k} \binom{{n}}{k}\binom{{r + m + n - k + 1}}{{r + 1}}^{-1}\tau_k } .
\end{align}
\end{lemma}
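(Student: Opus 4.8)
The plan is to reduce \eqref{gouldqu} to a single parameter-free binomial sum by combining the Beta-integral representation of reciprocal binomial coefficients from Lemma~\ref{lem.integ} with the inversion formula defining a binomial-transform pair. First I would record the two specializations of \eqref{beta} that drive the whole argument: writing \eqref{beta} as $\binom{u+v+1}{u+1}^{-1}=(u+1)\int_0^1 x^u(1-x)^v\,dx$, the choice $u=m$, $v=r+N-i$ gives $\binom{r+m+N-i+1}{m+1}^{-1}=(m+1)\int_0^1 x^m(1-x)^{r+N-i}\,dx$, while the choice $u=r$, $v=m+N$, together with the symmetry $x\mapsto 1-x$, gives $\binom{r+m+N+1}{r+1}^{-1}=(r+1)\int_0^1 x^{m+N}(1-x)^{r}\,dx$. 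These are exactly the representations that let the two different reciprocal binomial coefficients appearing on the two sides of \eqref{gouldqu} communicate.

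Next I would eliminate $t_k$ in favour of $\tau_k$ on the left-hand side. Since $\{(t_k),(\tau_k)\}$ is a binomial-transform pair, $t_k=\sum_{j=0}^k(-1)^j\binom{k}{j}\tau_j$. Substituting this into the left-hand side of \eqref{gouldqu}, interchanging the order of summation, applying the absorption identity $\binom{n}{k}\binom{k}{j}=\binom{n}{j}\binom{n-j}{k-j}$, and reindexing via $k=j+i$, all the $j$-dependent signs cancel and the left-hand side collapses to $\sum_{j=0}^n\binom{n}{j}\tau_j\,c_{n-j}$, where I set $c_N:=\sum_{i=0}^N(-1)^i\binom{N}{i}\binom{r+m+N-i+1}{m+1}^{-1}$.

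Everything then hinges on evaluating the inner sum $c_N$, and this is the step I expect to be the crux. Using the first Beta representation and pulling the finite sum inside the integral gives $c_N=(m+1)\int_0^1 x^m(1-x)^{r+N}\sum_{i=0}^N(-1)^i\binom{N}{i}(1-x)^{-i}\,dx$. The binomial theorem collapses the inner sum to $\bigl(1-(1-x)^{-1}\bigr)^N=(-1)^N x^N(1-x)^{-N}$, so that $c_N=(m+1)(-1)^N\int_0^1 x^{m+N}(1-x)^{r}\,dx$, and the second Beta representation identifies this with $\frac{m+1}{r+1}(-1)^N\binom{r+m+N+1}{r+1}^{-1}$. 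Substituting $c_{n-j}$ back and relabelling the summation index reproduces verbatim the right-hand side of \eqref{gouldqu}.

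Finally I would address the analytic housekeeping. The interchange of the finite sum and the integral is legitimate whenever $\operatorname{Re} r>-1$ and $\operatorname{Re} m>-1$, so that each factor $(1-x)^{r+N-i}$ is integrable on $[0,1]$; the identity then extends to all admissible complex $m$ and $r$ (neither a negative integer) by analytic continuation, since after passing to the $\Gamma$-function form both sides are rational in $m$ and $r$. Alternatively, one can bypass the integrals altogether by establishing the closed form of $c_N$ directly as a Gould-type summation of $\sum_{i}(-1)^i\binom{N}{i}\binom{r+m+N-i+1}{m+1}^{-1}$, which is precisely the route that realizes \eqref{gouldqu} as the restatement of the Gould--Quaintance binomial-transform identity.
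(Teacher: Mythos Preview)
Your argument is correct. The paper itself does not prove this lemma: it simply records \eqref{gouldqu} as an equivalent reformulation of a binomial-transform identity of Gould and Quaintance~\cite{gould14} and cites that reference for the proof. You instead supply a self-contained derivation using only tools already available in the paper, namely the Beta-integral representation~\eqref{beta} of Lemma~\ref{lem.integ} and the binomial inversion defining a transform pair. The key closed form you obtain,
\[
c_N=\sum_{i=0}^N(-1)^i\binom{N}{i}\binom{r+m+N-i+1}{m+1}^{-1}=\frac{m+1}{r+1}\,(-1)^N\binom{r+m+N+1}{r+1}^{-1},
\]
is precisely the Gould--Quaintance identity that the paper takes for granted; your Beta-integral evaluation of it (collapsing $\sum_i(-1)^i\binom{N}{i}(1-x)^{N-i}=(-x)^N$) is clean and fully rigorous in the range $\operatorname{Re} m>-1$, $\operatorname{Re} r>-1$, with the extension to general admissible $m,r$ handled correctly by analytic continuation. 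So your proposal actually does more than the paper at this point, and the route you take---reduce to the inner sum $c_N$, evaluate it via~\eqref{beta}, resubstitute---is a natural and efficient one.
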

	
\begin{theorem}\label{gooday}
If $n$ is a non-negative integer, $s$ and $t$ are integers and $r$ and $m$ are complex numbers other than negative integers, then 
\begin{align}\label{gooday1}
\begin{split}
&\sum_{k=0}^n (-1)^k \binom{n}{k} \binom{r+m+n-k+1}{m+1}^{-1}\left(H_{m+1}-H_{r+m+n-k+1}\right)\frac{G_{tk+s}}{L_{t}^k} \\
&= \frac{m+1}{r+1}\sum_{k=0}^n (-1)^{n-k}\binom{n}{k}\binom{r+m+n-k+1}{r+1}^{-1}\left(H_{m+n-k}-H_{r+m+n-k+1}\right)\frac{(-1)^s}{L^k_t}\left(G_0L_{tk-s}-G_{tk-s}\right) \\
&\qquad +\frac1{r+1}\sum_{k=0}^n (-1)^{n-k} \binom{n}{k}\binom{r+m+n-k+1}{r+1}^{-1}\frac{(-1)^s}{L^k_t}\left(G_0L_{tk-s}-G_{tk-s}\right).
\end{split}
\end{align}
\end{theorem}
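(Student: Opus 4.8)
The plan is to recognize the claimed identity as nothing more than the $m$-derivative of the Gould--Quaintance transform \eqref{gouldqu}, specialized to a concrete gibonacci binomial-transform pair. First I would take $t_k = G_{tk+s}/L_t^k$; by Lemma~\eqref{eq.oxjlkzh} (with its shift parameter renamed from $r$ to $s$) its binomial transform is $\tau_k = \frac{(-1)^s}{L_t^k}\left(G_0 L_{tk-s} - G_{tk-s}\right)$, so $\{(t_k),(\tau_k)\}$ is a genuine binomial-transform pair in the sense used throughout Method~III. Substituting this pair into \eqref{gouldqu} yields the harmonic-free identity $\sum_{k=0}^n (-1)^k \binom{n}{k}\binom{r+m+n-k+1}{m+1}^{-1}t_k = \frac{m+1}{r+1}\sum_{k=0}^n (-1)^{n-k}\binom{n}{k}\binom{r+m+n-k+1}{r+1}^{-1}\tau_k$. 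Since $t_k$ and $\tau_k$ do not involve $m$, I would then differentiate both sides with respect to $m$, moving $\tfrac{d}{dm}$ past the finite sums freely.

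On the left I must compute $\frac{d}{dm}\binom{r+m+n-k+1}{m+1}^{-1}$. Writing this reciprocal binomial through Gamma functions as $\Gamma(m+2)\,\Gamma(r+n-k+1)/\Gamma(r+m+n-k+2)$ and noting the decisive point that the middle factor $\Gamma(r+n-k+1)$ carries no $m$ (the gap between the two binomial entries is $m$-free), its logarithmic derivative is $\psi(m+2)-\psi(r+m+n-k+2)$, which by \eqref{Harm_psi} equals $H_{m+1}-H_{r+m+n-k+1}$. This reproduces exactly the harmonic factor on the left-hand side of the claim. On the right the object to differentiate is $\frac{m+1}{r+1}\binom{r+m+n-k+1}{r+1}^{-1}$, and here a product rule splits the answer into the two sums: differentiating the prefactor $m+1$ contributes the term with coefficient $\frac{1}{r+1}$, while differentiating the reciprocal binomial $\binom{r+m+n-k+1}{r+1}^{-1}$ (whose $m$-dependence now lives in the gap, giving logarithmic derivative $\psi(m+n-k+1)-\psi(r+m+n-k+2)=H_{m+n-k}-H_{r+m+n-k+1}$ by \eqref{Harm_psi}) contributes the term with coefficient $\frac{m+1}{r+1}$ carrying the factor $H_{m+n-k}-H_{r+m+n-k+1}$. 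Collecting these pieces matches the stated right-hand side term by term.

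The bookkeeping is routine Gamma-function manipulation; the single point demanding care is tracking which $\Gamma$-factor depends on $m$ in each reciprocal binomial. For $\binom{r+m+n-k+1}{m+1}^{-1}$ the variable $m$ appears in both entries but the gap stays fixed, whereas for $\binom{r+m+n-k+1}{r+1}^{-1}$ the lower entry is fixed and the gap moves with $m$; getting these two digamma differences right is precisely what distinguishes the harmonic pair $H_{m+1}-H_{r+m+n-k+1}$ on the left from $H_{m+n-k}-H_{r+m+n-k+1}$ on the right. Once this is handled correctly, no further obstacle remains, and the identity follows immediately.
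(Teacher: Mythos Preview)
Your proposal is correct and follows essentially the same approach as the paper: differentiate the Gould--Quaintance identity \eqref{gouldqu} with respect to $m$ after substituting the binomial-transform pair $t_k = G_{tk+s}/L_t^k$, $\tau_k = \frac{(-1)^s}{L_t^k}(G_0 L_{tk-s}-G_{tk-s})$ furnished by \eqref{eq.oxjlkzh}. Your write-up is in fact more explicit than the paper's, which simply records the two derivative formulas for the reciprocal binomials without the Gamma/digamma justification you supply.
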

\begin{proof}
Differentiate~\eqref{gouldqu} with respect to $m$, choose a binomial-transform pair
$$t_k = \frac{G_{tk+s}}{L^k_t}, \quad \tau_k=\frac{(-1)^s}{L^k_t}\left(G_0L_{tk-s}-G_{tk-s}\right).$$
Note that
\begin{equation*}
\frac{d}{{dm}}\binom{{r + m + n - k + 1}}{{m + 1}}^{ - 1} = \binom{{r + m + n - k + 1}}{{m + 1}}^{ - 1} \left( {H_{m + 1} - H_{r + m + n - k + 1} } \right),
\end{equation*}
and
\begin{equation*}
\frac{d}{{dm}}\binom{{r + m + n - k + 1}}{{r + 1}}^{ - 1} = \binom{{r + m + n - k + 1}}{{r + 1}}^{ - 1} \left( {H_{m + n - k} - H_{r + m + n - k + 1} } \right).
\end{equation*}
\end{proof}

\begin{corollary}
If $n$ is a non-negative integer, $s$ and $t$ are integers and $r$ is complex number other than negative integers, then 
\begin{align}\label{go13}
\begin{split}
&\sum_{k=0}^n(-1)^k \binom{n}{k}\frac{\left(1-H_{r+n-k+1}\right)}{(r+n-k+1)}\frac{G_{tk+s}}{L_t^k} \\
&= \frac{1}{r+1}\sum_{k=0}^n (-1)^{n-k-s}\binom{n}{k}\binom{r+n-k+1}{r+1}^{-1}\left(H_{n-k}-H_{r+n-k+1}\right)\frac{\left(G_0L_{tk-s}-G_{tk-s}\right)}{L_t^k}\\
&\quad + \frac{1}{r+1}\sum_{k=0}^n (-1)^{n-k-s}\binom{n}{k}\binom{r+n-k+1}{r+1}^{-1}\frac{\left(G_0L_{tk-s}-G_{tk-s}\right)}{L_t^k}.	
\end{split}	
\end{align}
In particular, if $s$ and $t$ are integers, then
\begin{align}\label{may1}
\begin{split}
\sum_{k=0}^n (-1)^k\binom{n}{k}\frac{(1-H_{n-k+1})}{(n-k+1)}\frac{G_{tk+s}}{L_t^k}
&= \sum_{k = 0}^n(-1)^{n-k-s}\binom{n}{k}\frac{\left(G_0L_{tk+s}-G_{tk-s}\right)}{(n-k+1)L_t^k}\\
&\qquad - \sum_{k = 0}^n(-1)^{n-k-s}\binom{n}{k}\frac{\left(G_0L_{tk+s}-G_{tk-s}\right)}{(n-k+1)^2L_t^k}.
\end{split}
\end{align}
\end{corollary}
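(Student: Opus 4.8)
The plan is to obtain both displayed identities as successive specializations of Theorem~\ref{gooday}, so that no new machinery is required; everything reduces to evaluating the generalized binomial coefficients and harmonic numbers at special parameter values.

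First I would derive~\eqref{go13} by setting $m=0$ in~\eqref{gooday1}. On the left-hand side the factor $\binom{r+n-k+1}{m+1}^{-1}$ collapses to $\binom{r+n-k+1}{1}^{-1}=1/(r+n-k+1)$, while $H_{m+1}=H_1=1$, so the bracket $H_{m+1}-H_{r+m+n-k+1}$ becomes $1-H_{r+n-k+1}$; this reproduces the summand on the left of~\eqref{go13}. On the right-hand side the prefactor $(m+1)/(r+1)$ becomes $1/(r+1)$, the harmonic bracket $H_{m+n-k}-H_{r+m+n-k+1}$ becomes $H_{n-k}-H_{r+n-k+1}$, and the sign is tidied using $(-1)^{n-k}(-1)^{s}=(-1)^{n-k-s}$ (because $(-1)^{2s}=1$). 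Collecting the two resulting sums yields~\eqref{go13}.

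Next I would specialize~\eqref{go13} at $r=0$ to reach~\eqref{may1}. The overall prefactor $1/(r+1)$ becomes $1$, and in both right-hand sums $\binom{r+n-k+1}{r+1}^{-1}$ reduces to $\binom{n-k+1}{1}^{-1}=1/(n-k+1)$. In the sum that carries the extra harmonic bracket, the telescoping relation $H_{n-k}-H_{n-k+1}=-1/(n-k+1)$ turns $1/(n-k+1)$ into $-1/(n-k+1)^{2}$, thereby producing the squared denominator, and the minus sign, of the second sum in~\eqref{may1}; the sum on the right of~\eqref{go13} that carries no harmonic bracket contributes the first term of~\eqref{may1}, with the single power $(n-k+1)$.

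I do not anticipate a genuine obstacle, since the argument is just a chain of substitutions into an already-proved identity. The only points needing care are the bookkeeping of the alternating signs (verifying $(-1)^{n-k+s}=(-1)^{n-k-s}$) and the correct reading of the two reciprocal binomial coefficients, together with recognizing that the single harmonic difference $H_{n-k}-H_{n-k+1}$ is precisely what creates the squared factor $(n-k+1)^{2}$ and the sign change in~\eqref{may1}.
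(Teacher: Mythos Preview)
Your proposal is correct and follows exactly the paper's approach: the paper's proof consists of the single line ``Set $m=0$ in~\eqref{gooday1}, then the result follows,'' and you have simply spelled out the bookkeeping of that substitution together with the further specialization $r=0$ that yields~\eqref{may1}. The extra care you take with the sign $(-1)^{n-k+s}=(-1)^{n-k-s}$ and with the collapse $H_{n-k}-H_{n-k+1}=-1/(n-k+1)$ is precisely what is implicit in the paper's terse proof.
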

\begin{proof}
Set $m=0$ in \eqref{gooday1}, then the result follows.
\end{proof}

From the LHS of~\eqref{gooday1} with $m=-3/2$, we have
\begin{equation}\label{mn5v94x}
\binom{{r + n - k - 1/2}}{{ - 1/2}}^{ - 1} = \binom{{r + n - k - 1/2}}{{r + n - k}}^{ - 1} = \binom{{2\left( {r + n - k} \right)}}{{r + n - k}}^{ - 1} 2^{2\left( {r + n - k} \right)} ,\text{ by~\eqref{eq.qdgxupm}.}
\end{equation}
Also
\begin{equation}\label{mdxkxi4}
H_{ - 1/2}  - H_{ - 1/2 + r + n - k} = - 2O_{r + n - k},\text{ by~\eqref{eq.plh634k}}.
\end{equation}
Similarly,
\begin{align}\label{zj8qis2}
H_{ - 3/2 + n - k} - H_{ - 1/2 + r + n - k} &= H_{ - 1/2 + n - k - 1}  - H_{ - 1/2 + r + n - k}\nonumber \\
& = \left( {2O_{n - k - 1} - 2\ln 2} \right) - \left( {2O_{r + n - k}  - 2\ln 2} \right)\nonumber\\
& = 2\left( {O_{n - k - 1} - O_{r + n - k} } \right)\text{ by~\eqref{eq.zts4fm1}}.
\end{align}
Using~\eqref{eq.qdgxupm}, for $k<n$ we have;
\begin{equation}\label{kgwmxux}
\binom{{r + n - k - 1/2}}{{r + 1}}^{ - 1} = \binom{{2\left( {r + n - k} \right)}}{{r + 1}}^{ - 1} \binom{{r + n - k}}{{r + 1}}^{ - 1} \binom{{2\left( {n - k - 1} \right)}}{{n - k - 1}}2^{2(r + 1)}. 
\end{equation}
Also for $k=n$, we use;
\begin{equation}\label{kgwmxuxuu}
	\binom{{r  - 1/2}}{{r + 1}}^{ - 1}  = -\frac{2^{-2(r + 1)}}{(r+1)}\binom{2r}{r}. 
\end{equation}

Now, we state a result involving odd Harmonic numbers.
\begin{corollary}
If $n$ is a non-negative integer, $s$ and $t$ are integers and $r$ is a complex number other than negative integers, then 
\begin{align}\label{go14}
\begin{split}
&\sum_{k=0}^n(-1)^{k+1}\binom{n}{k}\binom{2(r+n-k)}{r+n-k}^{-1}2^{2(r+n-k)+1}\frac{O_{r+n-k}G_{tk+s}}{L_t^k} \\
&= \frac{2^{2(r+1)}}{r+1}\sum_{k=0}^{n-1}(-1)^{n-k+1+s}\frac{\binom{n}{k}\binom{2(n-k-1)}{n-k-1}}{\binom{2(r+n-k)}{r+1}\binom{r+n-k}{r+1}}\frac{\left(O_{n-k-1}-O_{r+n-k}\right)\left(G_0L_{tk-s}-G_{tk-s}\right)}{L_t^k}\\
&\quad +\frac{2^{2(r+1)}}{r+1}\sum_{k=0}^{n-1}(-1)^{n-k+1+s}\frac{\binom{n}{k}\binom{2(n-k-1)}{n-k-1}}{\binom{2(r+n-k)}{r+1}\binom{r+n-k}{r+1}}\frac{\left(G_0L_{tk-s}-G_{tk-s}\right)}{L_t^k} \\
&\quad +\frac{(-1)^s}{(r+1)^2}\binom{2r}{r}\frac{(1-O_r)\left(G_0L_{tn-s}-G_{tn-s}\right)}{L_t^n}.
\end{split}
\end{align}
\end{corollary}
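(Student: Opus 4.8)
The plan is to specialize the master identity~\eqref{gooday1} at $m=-3/2$ and then convert every half-integer binomial coefficient and every half-integer harmonic number into the ordinary $O_\bullet$ and central-binomial quantities that have been pre-computed in~\eqref{mn5v94x}--\eqref{kgwmxuxuu}. Concretely, I would keep the very same binomial-transform pair $t_k=G_{tk+s}/L_t^k$, $\tau_k=(-1)^s(G_0L_{tk-s}-G_{tk-s})/L_t^k$ used in Theorem~\ref{gooday}, so that the $m$-differentiated Gould--Quaintance identity already supplies the three sums of~\eqref{gooday1}; all that then remains is to evaluate the coefficients at $m=-3/2$.

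First I would handle the left-hand side. At $m=-3/2$ the reciprocal binomial becomes $\binom{r+n-k-1/2}{-1/2}^{-1}$, which by~\eqref{mn5v94x} equals $\binom{2(r+n-k)}{r+n-k}^{-1}2^{2(r+n-k)}$, while the harmonic difference $H_{m+1}-H_{r+m+n-k+1}=H_{-1/2}-H_{r+n-k-1/2}$ collapses to $-2O_{r+n-k}$ by~\eqref{mdxkxi4}. Pulling the factor $-2$ through turns the sign $(-1)^k$ into $(-1)^{k+1}$ and the power $2^{2(r+n-k)}$ into $2^{2(r+n-k)+1}$, producing exactly the left member of~\eqref{go14}.

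The right-hand side is where the effort concentrates, and the main obstacle is that the generic reduction~\eqref{kgwmxux} for $\binom{r+n-k-1/2}{r+1}^{-1}$ is valid only for $k<n$: at $k=n$ it would force the central binomial $\binom{2(n-k-1)}{n-k-1}$ to be evaluated at index $-1$, so that term must be split off. For $0\le k\le n-1$ I would insert~\eqref{kgwmxux} together with the harmonic difference $H_{m+n-k}-H_{r+m+n-k+1}=H_{-3/2+n-k}-H_{r+n-k-1/2}=2(O_{n-k-1}-O_{r+n-k})$ supplied by~\eqref{zj8qis2}; absorbing the prefactor $m+1=-1/2$ against these factors of $2$ and collecting the signs $(-1)^{n-k}$ and $(-1)^s$ then yields the two finite sums of~\eqref{go14}. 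For the isolated term $k=n$ I would instead use the special value~\eqref{kgwmxuxuu} and the boundary evaluation of the harmonic difference $H_{-3/2}-H_{r-1/2}=2(1-O_r)$ (equivalently, setting $O_{-1}=1$, consistent with the recurrence $O_z=O_{z-1}+1/(2z-1)$ at $z=0$ and with~\eqref{eq.pobmr6h}), so that $O_{n-k-1}-O_{r+n-k}$ becomes $1-O_r$; combining the two $k=n$ contributions coming from both sums on the right of~\eqref{gooday1} produces the last term of~\eqref{go14}.

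The only genuinely delicate point is the bookkeeping: one must keep the two reciprocal central binomials $\binom{2(r+n-k)}{r+1}^{-1}\binom{r+n-k}{r+1}^{-1}$, the powers $2^{2(r+1)}$ and $L_t^{-k}$, and the accumulated signs consistent across the generic range and the boundary term, and take care that no stray sign or power of $2$ is lost when the $-1/2$ and the factors of $2$ are merged. Since the differentiation step and the pair $(t_k,\tau_k)$ are identical to those already justified for~\eqref{gooday1}, no analytic input beyond Lemmata~\ref{lem.ho} and~\ref{lem.binomial} is needed, and the corollary follows by collecting like terms.
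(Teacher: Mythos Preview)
Your proposal is correct and follows precisely the paper's own route: the proof in the paper is the one-line instruction ``Set $m=-3/2$ in~\eqref{gooday1}'', with the half-integer reductions~\eqref{mn5v94x}--\eqref{kgwmxuxuu} placed immediately before the corollary for exactly the use you describe, including the need to split off the boundary term $k=n$. Your treatment of that boundary term via~\eqref{kgwmxuxuu} and~\eqref{eq.pobmr6h} is also what the paper intends.
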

\begin{proof}
Set $m=-3/2$ in \eqref{gooday1}.
\end{proof}
In particular, if $s$ and $t$ are integers, then
\begin{align}\label{may10}
\begin{split}
&\sum_{k=0}^n (-1)^{k+1}2^{2(n-k)}\frac{\binom{n}{k}}{\binom{2n-2k}{n-k}}\frac{O_{n-k}G_{tk+s}}{L_t^k}\\
&=\sum_{k=0}^{n-1}(-1)^{n-k+s+1}\frac{\binom{n}{k}\binom{2(n-k-1)}{n-k-1}}{(n-k)^2}\frac{\left(G_0L_{tk-s}-G_{tk-s}\right)}{L_t^k} \\
&-\sum_{k=0}^{n-1}(-1)^{n-k+s+1}\frac{\binom{n}{k}\binom{2(n-k-1)}{n-k-1}}{(n-k)^2(2n-2k-1)}\frac{\left(G_0L_{tk-s}-G_{tk-s}\right)}{L_t^k}.
\end{split}
\end{align}

\begin{theorem}\label{gooday200}
	If $n$ is a non-negative integer, $s$ and $t$ are integers and $r$ and $m$ are complex numbers other than negative integers, then 
	\begin{align}\label{gooday100}
		\begin{split}
			&\sum_{k=0}^n(-1)^k\binom{n}{k}\binom{r+m+n-k+1}{m+1}^{-1}\left(H_{m+1}-H_{r+m+n-k+1}\right)\frac{(-1)^s}{L^k_t}\left(G_0L_{tk-s}-G_{tk-s}\right)\\
			&=\frac{m+1}{r+1}\sum_{k=0}^n(-1)^{n-k}\binom{n}{k}\binom{r+m+n-k+1}{r+1}^{-1}\left(H_{m+n-k}-H_{r+m+n-k+1}\right)\frac{G_{tk+s}}{L_{t}^k}\\
			&\qquad +\frac1{r+1}\sum_{k=0}^n(-1)^{n-k}\binom{n}{k}\binom{r+m+n-k+1}{r+1}^{-1}\frac{G_{tk+s}}{L_{t}^k}.
		\end{split}
	\end{align}
\end{theorem}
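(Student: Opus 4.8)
The plan is to mirror the proof of Theorem~\ref{gooday} verbatim, exploiting the fact that the binomial transform is involutive. Recall that $\{(s_n),(\sigma_n)\}$ being a binomial-transform pair means each sequence is the $(-1)^k\binom{n}{k}$-transform of the other; since that transform is its own inverse, the roles of the two sequences may be interchanged. In particular, if $\{(t_k),(\tau_k)\}$ is a binomial-transform pair then so is $\{(\tau_k),(t_k)\}$. This single observation converts Theorem~\ref{gooday} into Theorem~\ref{gooday200}.

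Concretely, the lemma establishing~\eqref{eq.oxjlkzh} shows that the sequences
\begin{equation*}
t_k = \frac{G_{tk+s}}{L^k_t}, \qquad \tau_k=\frac{(-1)^s}{L^k_t}\left(G_0L_{tk-s}-G_{tk-s}\right)
\end{equation*}
form a binomial-transform pair. By the involution property I may therefore feed the \emph{swapped} pair
\begin{equation*}
t_k = \frac{(-1)^s}{L^k_t}\left(G_0L_{tk-s}-G_{tk-s}\right), \qquad \tau_k=\frac{G_{tk+s}}{L^k_t}
\end{equation*}
into the Gould--Quaintance identity~\eqref{gouldqu}. Everything downstream is then a formal repetition of the earlier argument with these new sequences in place.

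Next I would differentiate~\eqref{gouldqu} with respect to $m$, using the same two derivative formulas recorded in the proof of Theorem~\ref{gooday}, namely $\tfrac{d}{dm}\binom{r+m+n-k+1}{m+1}^{-1}=\binom{r+m+n-k+1}{m+1}^{-1}(H_{m+1}-H_{r+m+n-k+1})$ and $\tfrac{d}{dm}\binom{r+m+n-k+1}{r+1}^{-1}=\binom{r+m+n-k+1}{r+1}^{-1}(H_{m+n-k}-H_{r+m+n-k+1})$. On the left-hand side, differentiating the inverse binomial coefficient produces precisely the factor $(H_{m+1}-H_{r+m+n-k+1})$ multiplying the new $t_k=\frac{(-1)^s}{L_t^k}(G_0L_{tk-s}-G_{tk-s})$, which is exactly the left-hand side of~\eqref{gooday100}. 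On the right-hand side the product rule acts on two factors: differentiating the scalar prefactor $(m+1)/(r+1)$ contributes the clean second summation carrying the coefficient $1/(r+1)$, while differentiating $\binom{r+m+n-k+1}{r+1}^{-1}$ contributes the first summation, carrying the extra harmonic factor $(H_{m+n-k}-H_{r+m+n-k+1})$ together with the prefactor $(m+1)/(r+1)$, both multiplied by the new $\tau_k=\frac{G_{tk+s}}{L_t^k}$.

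There is no genuine obstacle here: the entire derivation is bookkeeping once the swap of the transform pair is observed. The only point requiring a moment's care is verifying that the two terms produced by the product rule on the right assemble exactly into the stated pair of summations, with the correct signs $(-1)^{n-k}$ and the correct harmonic-number factors in each; this is identical to the corresponding check in the proof of Theorem~\ref{gooday}, so no new computation is needed.
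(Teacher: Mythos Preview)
Your proposal is correct and follows essentially the same approach as the paper: differentiate the Gould--Quaintance identity~\eqref{gouldqu} with respect to $m$ and substitute the swapped binomial-transform pair $t_k=\frac{(-1)^s}{L_t^k}(G_0L_{tk-s}-G_{tk-s})$, $\tau_k=\frac{G_{tk+s}}{L_t^k}$. Your write-up in fact supplies more detail than the paper's own proof, in particular the explicit justification via the involutivity of the binomial transform and the product-rule bookkeeping.
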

\begin{proof}

		Differentiate~\eqref{gouldqu} with respect to $m$, choose a binomial-transform pair
		$$t_k=\frac{(-1)^s}{L^k_t}\left(G_0L_{tk-s}-G_{tk-s}\right), \quad \tau_k=\frac{G_{tk+s}}{L^k_t}.$$
\end{proof}
\begin{corollary}
If $n$ is a non-negative integer, $s$ and $t$ are integers and $r$ is a complex number other than negative integers, then 
\begin{align}\label{go130}
	\begin{split}
		&\sum_{k=0}^n(-1)^{k-s}\binom{n}{k}\frac{\left(1-H_{r+n-k+1}\right)}{(r+n-k+1)}\frac{\left(G_0L_{tk-s}-G_{tk-s}\right)}{L_t^k}\\
		&=\frac{1}{r+1}\sum_{k=0}^n(-1)^{n-k}\binom{n}{k}\binom{r+n-k+1}{r+1}^{-1}\left(H_{n-k}-H_{r+n-k+1}\right)\frac{G_{tk+s}}{L_t^k}\\
		&\quad +\frac{1}{r+1}\sum_{k=0}^n(-1)^{n-k}\binom{n}{k}\binom{r+n-k+1}{r+1}^{-1}\frac{G_{tk+s}}{L_t^k}.	
	\end{split}	
\end{align}
\end{corollary}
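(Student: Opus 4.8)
The plan is to obtain \eqref{go130} as the special case $m=0$ of the identity \eqref{gooday100} established in Theorem~\ref{gooday200}. No new machinery is required: the entire content of the corollary is a routine specialization, exactly parallel to the way \eqref{go13} was derived from \eqref{gooday1}. (Indeed, Theorem~\ref{gooday200} is just Theorem~\ref{gooday} with the binomial-transform pair $t_k,\tau_k$ interchanged, so \eqref{go130} is the ``swapped'' companion of \eqref{go13}.)

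First I would substitute $m=0$ throughout \eqref{gooday100}. On the left-hand side this collapses the degenerate binomial coefficient, since $\binom{r+n-k+1}{1}^{-1}=\frac{1}{r+n-k+1}$, and it replaces $H_{m+1}$ by $H_1=1$, producing the factor $(1-H_{r+n-k+1})/(r+n-k+1)$. I would then merge the two sign factors $(-1)^k$ and $(-1)^s$ into the single exponent appearing in \eqref{go130}; the only point to note here is that $(-1)^{k+s}=(-1)^{k-s}$ because $(-1)^{2s}=1$, so the stated form $(-1)^{k-s}$ is legitimate.

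On the right-hand side the prefactor $\frac{m+1}{r+1}$ reduces to $\frac{1}{r+1}$, while $H_{m+n-k}$ becomes $H_{n-k}$ and the factors $\binom{r+n-k+1}{r+1}^{-1}$ are already in the desired shape. Matching these against the two sums in \eqref{go130} completes the identification. There is no genuine obstacle: the only thing requiring care is to evaluate $\binom{r+m+n-k+1}{m+1}^{-1}$ at $m=0$ as $1/(r+n-k+1)$ rather than leaving the top argument unreduced, and to confirm that $m=0$ lies in the admissible range, which it does since \eqref{gouldqu} and its $m$-derivative hold for all complex $m$ that are not negative integers.
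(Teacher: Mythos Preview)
Your proposal is correct and follows exactly the same approach as the paper: the paper's proof simply states ``Set $m=0$ in Theorem~\ref{gooday200}'' and declares the result immediate. Your additional verification of the specializations (the collapse of $\binom{r+n-k+1}{1}^{-1}$ to $1/(r+n-k+1)$, the reduction $H_1=1$, and the sign equality $(-1)^{k+s}=(-1)^{k-s}$) merely spells out the routine details the paper leaves implicit.
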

\begin{proof}
Set $m=0$ in Theorem~\ref{gooday200}. Thus, the result follows immediately.
\end{proof}
In particular, if $s$ and $t$ are integers, then
\begin{align}
	\label{may117}
	\begin{split}
		\sum_{k=0}^n (-1)^k\binom{n}{k}\frac{(1-H_{n-k+1})}{(n-k+1)}\frac{\left(G_0L_{tk+s}-G_{tk-s}\right)}{L_t^k}
		&=\sum_{k = 0}^n(-1)^{n-k-s}\binom{n}{k}\frac{G_{tk+s}}{(n-k+1)L_t^k}\\
		&\quad -\sum_{k = 0}^n(-1)^{n-k-s}\binom{n}{k}\frac{G_{tk+s}}{(n-k+1)^2L_t^k}.
	\end{split}
\end{align}

\begin{corollary}
If $n$ is a non-negative integer, $s$ and $t$ are integers and $r$ is a complex number other than negative integers, then 
\begin{align}\label{go140}
	\begin{split}
		&\sum_{k=0}^n(-1)^{k+1-s}\binom{n}{k}\binom{2(r+n-k)}{r+n-k}^{-1}2^{2(r+n-k)+1}\frac{O_{r+n-k}\left(G_0L_{tk-s}-G_{tk-s}\right)}{L_t^k}\\
		&=\frac{2^{2(r+1)}}{r+1}\sum_{k=0}^{n-1}(-1)^{n-k+1}\frac{\binom{n}{k}\binom{2(n-k-1)}{n-k-1}}{\binom{2(r+n-k)}{r+1}\binom{r+n-k}{r+1}}\frac{\left(O_{n-k-1}-O_{r+n-k}\right)G_{tk+s}}{L_t^k}\\
		&\quad +\frac{2^{2(r+1)}}{r+1}\sum_{k=0}^{n-1}(-1)^{n-k+1}\frac{\binom{n}{k}\binom{2(n-k-1)}{n-k-1}}{\binom{2(r+n-k)}{r+1}\binom{r+n-k}{r+1}}\frac{G_{tk+s}}{L_t^k}+\frac{(-1)^s}{(r+1)^2}\binom{2r}{r}\frac{(1-O_r)G_{tn+s}}{L_t^n}.
	\end{split}
\end{align}
\end{corollary}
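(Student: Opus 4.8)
The plan is to obtain \eqref{go140} as the specialization $m=-3/2$ of the master identity \eqref{gooday100} in Theorem~\ref{gooday200}, in exact parallel to the way the earlier corollary \eqref{go14} was extracted from \eqref{gooday1}. The only structural change is that here the binomial-transform pair $(t_k,\tau_k)$ has its two members interchanged, so the Lucas combination $(-1)^s L_t^{-k}\left(G_0L_{tk-s}-G_{tk-s}\right)$ now occupies the left-hand side and the plain term $G_{tk+s}/L_t^k$ the right-hand side. First I would set $m=-3/2$, so that $m+1=-1/2$ and $r+m+n-k+1=r+n-k-1/2$ throughout. On the left-hand side the inverse binomial $\binom{r+n-k-1/2}{-1/2}^{-1}$ is rewritten by \eqref{mn5v94x} as $2^{2(r+n-k)}\binom{2(r+n-k)}{r+n-k}^{-1}$, while the harmonic difference $H_{-1/2}-H_{r+n-k-1/2}$ collapses to $-2O_{r+n-k}$ by \eqref{mdxkxi4}. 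Absorbing the factor $-2$ into the sign $(-1)^{k}$ and folding the $(-1)^s$ into the exponent then produces precisely the left-hand side of \eqref{go140}.

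Next I would process the two right-hand sums of \eqref{gooday100}. The prefactor $\tfrac{m+1}{r+1}$ becomes $-\tfrac{1}{2(r+1)}$, the harmonic difference $H_{n-k-3/2}-H_{r+n-k-1/2}$ becomes $2\left(O_{n-k-1}-O_{r+n-k}\right)$ by \eqref{zj8qis2}, and for $k<n$ the inverse binomial $\binom{r+n-k-1/2}{r+1}^{-1}$ is expanded through \eqref{kgwmxux}. Collecting the constant $2^{2(r+1)}$ together with the factors of $2$ supplied by the harmonic difference and by the prefactor, the $k<n$ portion of the first sum reproduces the first sum of \eqref{go140} and the $k<n$ portion of the second sum reproduces its second sum, after the sign bookkeeping $(-1)^{n-k}\cdot(-1)\mapsto(-1)^{n-k+1}$ is carried out.

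The main obstacle, and the step demanding the most care, is the boundary index $k=n$. Identity \eqref{kgwmxux} is valid only for $k<n$, since at $k=n$ the factors $\binom{r}{r+1}$ and $\binom{2(n-k-1)}{n-k-1}$ degenerate; this is exactly why the separate evaluation \eqref{kgwmxuxuu} was prepared. Accordingly, I would peel the $k=n$ term off \emph{each} right-hand sum and evaluate it using $\binom{r-1/2}{r+1}^{-1}=-2^{-2(r+1)}\binom{2r}{r}/(r+1)$ from \eqref{kgwmxuxuu}, together with the half-integer harmonic value $H_{-3/2}-H_{r-1/2}=2(1-O_r)$ furnished by \eqref{eq.pobmr6h}. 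Adding the two boundary contributions consolidates them into the single isolated summand displayed as the last term of \eqref{go140}, and with this term removed the remaining sums run only to $n-1$, exactly the range shown. The delicate point throughout is simply that the harmonic and binomial closed forms of Lemmata~\ref{lem.ho} and~\ref{lem.binomial} behave differently at the endpoint than in the bulk, so keeping the $k=n$ case separate is essential; once that is done, the identity follows directly from \eqref{gooday100}.
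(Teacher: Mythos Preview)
Your proposal is correct and follows exactly the paper's approach: the paper's proof is the single line ``Set $m=-3/2$ in Theorem~\ref{gooday200},'' and you carry out precisely this specialization, invoking the same half-integer reductions \eqref{mn5v94x}, \eqref{mdxkxi4}, \eqref{zj8qis2}, \eqref{kgwmxux}, \eqref{kgwmxuxuu} that were set up for the companion corollary~\eqref{go14}. Your explicit separation of the $k=n$ boundary term and your remark on why \eqref{kgwmxux} fails there are more detailed than what the paper records, but the strategy is identical.
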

\begin{proof}
Set $m=-3/2$ in Theorem~\ref{gooday200}.
\end{proof}
In particular, if $s$ and $t$ are integers, then
\begin{align}\label{may10}
	\begin{split}
		&\sum_{k=0}^n (-1)^{k+1}2^{2(n-k)}\frac{\binom{n}{k}}{\binom{2n-2k}{n-k}}\frac{O_{n-k}\left(G_0L_{tk-s}-G_{tk-s}\right)}{L_t^k}\\
		&=\sum_{k=0}^{n-1}(-1)^{n-k+s+1}\frac{\binom{n}{k}\binom{2(n-k-1)}{n-k-1}}{(n-k)^2}\frac{G_{tk+s}}{L_t^k}
		 -\sum_{k=0}^{n-1}(-1)^{n-k+s+1}\frac{\binom{n}{k}\binom{2(n-k-1)}{n-k-1}}{(n-k)^2(2n-2k-1)}\frac{G_{tk+s}}{L_t^k}.
	\end{split}
\end{align}

	

\end{document}